\documentclass[11pt,reqno,a4paper]{amsart}
\usepackage{a4wide,fullpage}
\usepackage{graphicx,amsmath,amssymb,amsfonts,amsthm,enumitem,bm,xcolor}
\usepackage{cleveref}

\usepackage[normalem]{ulem}

\usepackage{url}

\usepackage{enumitem}

\usepackage{color}
\usepackage{comment}

\usepackage{tabularx}
\usepackage{calc}

\usepackage{longtable,booktabs,array,caption,rotating}
\newcommand{\N}{\mathbb{N}}
\newcommand{\R}{\mathbb{R}}

\newcommand{\s}{{\sigma}}

\renewcommand{\a}{\alpha}
\renewcommand{\b}{\beta}

\renewcommand{\d}{{\delta}}
\newcommand{\g}{\gamma}

\renewcommand{\l}{\lambda}

\renewcommand{\k}{\kappa}

\newcommand{\z}{\zeta}

\renewcommand{\(}{\left\(}
\renewcommand{\)}{\right\)}

\newcommand{\pa}[2]{\left(\frac{#1}{#2}\right)}

\numberwithin{equation}{section}
\theoremstyle{plain}
\newtheorem{theorem}{Theorem}[section]
\newtheorem{lemma}[theorem]{Lemma}

\newtheorem*{remark*}{Remark}
\newtheorem*{remarks}{Remarks}

\newtheorem*{example*}{Example}

\newtheorem{corollary}[theorem]{Corollary}
\newtheorem{proposition}[theorem]{Proposition}

\theoremstyle{definition} 
\newcounter{specialstatement}

\newtheorem{conjecture}[specialstatement]{Conjecture}

\numberwithin{equation}{section}

\renewcommand{\binom}[2]{\left(\begin{smallmatrix}#1\\\\#2\end{smallmatrix}\right)}
\newcommand{\smallbinom}[2]{(\begin{smallmatrix}#1\\#2\end{smallmatrix})}

\setlist[enumerate]{leftmargin=*,label=\rm{(\arabic*)}}

\makeatletter
\@namedef{subjclassname@2020}{%
	\textup{2020} Mathematics Subject Classification}
\makeatother

\allowdisplaybreaks

\newif\ifdefs

\defstrue

\setlist[itemize]{noitemsep, topsep=0pt}
\ifdefs
\else
\excludecomment{discussion}
\excludecomment{extradetails}
\fi

\title{The Jensen--P\'olya program and inequalities for arithmetic sequences}

\author{Koustav Banerjee}
\author{Kathrin Bringmann}
\address{University of Cologne, Department of Mathematics and Computer Science, Weyertal 86-90, 50931 Cologne, Germany}
\email{kbanerj1@uni-koeln.de}
\email{kbringma@math.uni-koeln.de}
\author{Larry Rolen}
\address{Department of Mathematics, 1420 Stevenson Center, Vanderbilt University, Nashville, TN 37240}
\email{larry.rolen@vanderbilt.edu}

\allowdisplaybreaks

\subjclass[2020]{05A16, 05A20, 11C08, 11C20, 11P82, 26C05, 33C45}
\keywords{asymptotics, higher order Tur\'{a}n inequalities, hyperbolic polynomials, infinite log-concavity, log-concavity, partitions, polynomial roots.}
\begin{document}
	\maketitle

\begin{abstract}
We develop an analytic framework to study hyperbolicity of Jensen polynomials and related inequalities for arithmetic sequences with general asymptotic growth. Our results provide partial converses to the Hermite–Jensen phenomenon of Griffin, Ono, Zagier, and the third author. As applications we prove several conjectures concerning log-concavity, higher-order Turán inequalities, Laguerre inequalities, infinite log-concavity, and Toeplitz determinants for partition-type functions.
\end{abstract}

\section{Introduction and statement of results}\hspace{0 cm}

\subsection{Introduction}  In this paper, we develop a general analytic framework for studying inequalities of arithmetic sequences with asymptotic conditions. This framework yields partial converses to the Hermite–Jensen phenomenon and leads to several applications to partition-type functions. As an application we resolve several conjectures in the literature.  

The {\it partition function} $p(n)$ counts the number of weakly decreasing sequences of positive integers summing to $n$. Hardy and Ramanujan \cite{HR} famously proved that
\begin{equation}\label{pnasymp} 
 p(n)\sim \frac{1}{4\sqrt{3}}\frac{e^{\sqrt{\frac{2}3 n}}}{n}\quad\quad \text{ as }n\rightarrow\infty.
 \end{equation}
 Here, we study asymptotics of polynomials built out of a large class of sequences. We explore applications to partition-type functions, as well as many functions without analytic structure like modularity. 
We are especially interested in inequalities. This topic has seen a large outpouring of work in recent years. A sequence $\{a(n)\}_{n\ge 0}$ of real numbers is {\it log-concave} for $n$ if
\begin{equation}\label{log-concave}
a^2(n)-a(n-1)a(n+1)\geq0.
\end{equation}
  Nicolas \cite{Nicolas}, and later DeSalvo and Pak \cite{DP15}, proved that $\{p(n)\}_{n\ge 26}$ is log-concave. Chen \cite{Chen1} reinterpreted this in terms of real-rootedness of certain degree two polynomials. 
  A sequence $\{a(n)\}_{n\ge 0}$ of real numbers satisfies the {\it higher order Tur\'{a}n inequality}\footnote{In the literature this is also called {\it third-order Tur\'{a}n inequality.}} for $n\in\N$ if
  \begin{equation}\label{HigherOrderTuranDef}
  \begin{aligned}
  4\left(a^2(n)-a(n-1)a(n+1)\right)&\left(a^2(n+1)-a(n)a(n+2)\right)\\
  &-\left(a(n)a(n+1)-a(n-1)a(n+2)\right)^2\geq 0.
  \end{aligned}
  \end{equation} 
Chen, Jia, and Wang \cite[Theorem 1.4]{Chen2} showed that $\{p(n)\}_{n\ge 95}$ satisfies \eqref{HigherOrderTuranDef}.  
 
The {\it Jensen polynomials} of degree $m\ge 2$ associated to $\{a(n)\}_{n\ge 0}$ are defined by
\begin{equation}\label{Jensendef}
J^{m,n}_a(x):=\sum_{j=0}^{m}\binom{m}{j}a(n+j)x^j.
\end{equation}
 The roots of $J^{2,n}_p(x)$ are real iff $p(n+1)$ is log-concave. Moreover, the roots of $J^{3,n}_p(x)$ are real iff $p(n+1)$ satisfies the higher order Tur\'an inequalities (see \cite{Chen2}). We say that a polynomial $f(x)\in\R[x]$ is {\it hyperbolic} if it has only real roots. Nicolas' and DeSalvo--Pak's result can be reformulated as stating that $J^{2,n}_p$ is hyperbolic for  $n\geq25$. Chen, Jia, and Wang built on the log-concavity and higher-order Tur\'an inequalities for $p$ in \cite[Conjecture 1.5]{Chen2} to conjecture that for each $m$ there exists a constant $N_m$ such that $J^{m,n}_p(x)$ is hyperbolic if $n\geq N_m$. Griffin, Ono, Zagier, and the third author \cite[Theorem 5]{GORZ} proved the conjecture of Chen, Jia, and Wang by defining a renormalized version of the polynomials and showing that these tend to Hermite polynomials.  Before recalling the methods of \cite{GORZ} and our extensions of them, we require some notation.

\subsection{Assumptions} 
 Throughout, $\{a(n)\}_{n\ge 0}$ denotes a sequence of positive numbers. Our  main results  refer to the following assumptions, which are used in different combinations. 
 \begin{enumerate}[label=(C\arabic*)]
  \item \label{GenGORZThm1assump1} For $\a_1,\a_2\in \mathbb{R}$, $\a_1\le j\le a_2$, there exist $\{A(n)\}_{n\ge 0}$, $\{\d(n)\}_{n\ge 0}$ such that
 \begin{equation*}
 \hspace{2 cm}\log\left(\frac{a(n+j)}{a(n)}\right)=A(n)j-\d^2(n)j^2+o\left(1\right)\ \ \ \ (\text{as}\ n\to \infty).
 \end{equation*}
  \item \label{GenGORZThm1assump2} We have $\lim_{n\to \infty}\d(n)=0$ and $\d(n)>0$ for $n\!\gg\!1$.
 \item \label{genthmassump2} We have $\lim_{n\to \infty}A(n)=0$.
	\item \label{InflogconcaveThmassump1} For $\a_1,\a_2\in \mathbb{R}$ and  $\a_1\le j\le a_2$, there exist $\{B(n)\}_{n\ge 0}$ and $\{\b(n)\}_{n\ge 0}$ such that  
	\begin{equation*}
	\log\left(\frac{\d(n+j)}{\d(n)}\right)=B(n)j+\b^2(n)j^2+o\left(1\right)\ \ \ \ (\text{as}\ n\to \infty).
	\end{equation*}
	\item \label{InflogconcaveThmassump2} We have $\lim_{n\to \infty}B(n)= 0$.
	\item \label{InflogconcaveThmassump3} We have $\b(n)=o(\d(n))$ as $n\to \infty$.
\end{enumerate}
 We consider sequences $\{a_d(n)\}_{n\ge 0}$ ($d\in \mathbb{N}$), whose asymptotic growth is of the shape 
\begin{equation}\label{genexample}
a_d(n)\sim c_1(d)\frac{\exp\left(\sum_{\l\in \mathcal{S}}A_{\l}(d)n^{\l}\right)}{n^{c_2(d) }}\ \ \ \ (\text{as}\ n\to \infty)
\end{equation}
 which includes many partition-type functions. Here we assume the following:\footnote{Throughout the paper, for $\{a_d(n)\}_{n\ge 0}$ (as in \eqref{genexample}), we always assume that it satisfies \ref{genexampleassump1}\textendash\ref{genexampleassump4}.}
\begin{enumerate}[label=(C\arabic*), start=7,  labelwidth=\widthof{(C99)}, 
	leftmargin=\labelwidth+\labelsep, 
	align=left]
		\item \label{genexampleassump1} For $\l\in \mathcal{S}$, we have $A_{\l}(d)\neq 0$.
	\item \label{genexampleassump2} The set $\mathcal{S}\subset\mathbb{Q}^{+}\cap (0,1)$ is finite.
	\item \label{genexampleassump3} We have $A_{\l^{*}}(d)>0$ with $\l^{*}:=\underset{\l \in \mathcal{S}}{\max}\{\l \}$.
	\item \label{genexampleassump4} We have $c_1(d)>0$ and $c_2(d)\in \mathbb{R}$.
\end{enumerate}

\subsection{Statement of results} Our first result is a key modification of \cite[Theorem 3]{GORZ} and provides a general criterion guaranteeing hyperbolicity of Jensen polynomials for sequences satisfying the asymptotic conditions \ref{GenGORZThm1assump1} and \ref{GenGORZThm1assump2}.

\begin{theorem}\label{GenGORZcor1}
 If $\{a(n)\}_{n\ge 0}$ satisfies {\rm{\ref{GenGORZThm1assump1}}} with $0\le j\le m$ and {\rm{\ref{GenGORZThm1assump2}}}, then $J^{m,n}_{a}$ is hyperbolic for $n\gg 1$.
\end{theorem}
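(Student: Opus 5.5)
We follow the strategy of \cite[Theorem 3]{GORZ}: renormalize $J^{m,n}_a$ so that it converges to the Hermite polynomial $H_m$, then pass hyperbolicity from $H_m$ to the renormalized polynomials. Take
\[
\widehat{J}^{m,n}_a(X):=\frac{\d(n)^{-m}}{a(n)}\,J^{m,n}_a\!\left(\frac{\d(n)X-1}{\exp(A(n))}\right).
\]
This is obtained from $J^{m,n}_a$ by an affine change of variables, which is invertible for $n\gg1$ because $\d(n)>0$ there by \ref{GenGORZThm1assump2}, followed by multiplication by a positive constant. Hence $\widehat{J}^{m,n}_a$ is hyperbolic if and only if $J^{m,n}_a$ is.

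\textbf{Step 1: expand the renormalized polynomial.} Using $x=(\d X-1)e^{-A}$, expand
\[
\widehat{J}^{m,n}_a(X)=\d^{-m}\sum_{j=0}^m\binom{m}{j}\frac{a(n+j)}{a(n)}e^{-Aj}(\d X-1)^j.
\]
By \ref{GenGORZThm1assump1} with $0\le j\le m$,
\[
\frac{a(n+j)}{a(n)}e^{-Aj}=e^{-\d^2j^2}\bigl(1+o(1)\bigr).
\]
Expanding $e^{-\d^2j^2}$ in powers of $\d^2 j^2$ and collecting the coefficient of $X^k$ leaves sums of the form $\sum_j\binom{m}{j}(-1)^{j}\binom{j}{k}j^{2\ell}$. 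Finite-difference identities make these vanish unless $2\ell\ge m-k$. The surviving terms are exactly those of order $\d^{m-k}$, which cancels the prefactor $\d^{-m}X^k\d^k$. Summing them reproduces the coefficients of the Hermite polynomial $H_m(X)$ in the normalization $\sum_k \frac{(-1)^k m!}{k!(m-2k)!}X^{m-2k}$, the same computation as in \cite{GORZ}.

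\textbf{Step 2: control the error, the main obstacle.} The $o(1)$ term in \ref{GenGORZThm1assump1} is only qualitative. After dividing by $\d^{m}$, an error $o(1)$ in the coefficient sum could blow up, since $\d(n)\to0$. In \cite{GORZ} this is avoided by assuming the error is $o(\d^m)$, or by a full Taylor expansion with more terms.

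So the first thing to check is how the error enters. Write
\[
\frac{a(n+j)}{a(n)}e^{-Aj}=e^{-\d^2j^2+\epsilon_j(n)},\qquad \epsilon_j(n)\to0.
\]
The coefficient of $X^k$ picks up $\d^{k-m}\sum_j\binom{m}{j}\binom{j}{k}(-1)^{j-k}(e^{\epsilon_j}-1)e^{-\d^2j^2}$. For $k<m$ this need not be small.

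The intended way around this, which makes the statement a modification of \cite{GORZ} rather than a corollary of it, is to choose the renormalization differently. Shift by the actual ratios rather than by the model: replace $A(n)$ and $\d(n)$ by $\widetilde A(n):=\log\frac{a(n+1)}{a(n)}$ and the second difference $\widetilde\d^2(n):=-\tfrac12\log\frac{a(n)a(n+2)}{a(n+1)^2}$, and argue along subsequences. If the resulting polynomial fails to converge to $H_m$, one would extract a subsequence on which the rescaled coefficients converge to a limit polynomial. One would then show that this limit is a limit of polynomials whose coefficients form a Pólya frequency (multiplier) sequence shape, and hence still hyperbolic.

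\textbf{Step 3: conclude.} Concretely, we would aim to show that $\widehat{J}^{m,n}_a\to H_m$ coefficientwise. $H_m$ has $m$ distinct real simple roots, and real simple roots of a real polynomial persist under small coefficient perturbations by continuity (Hurwitz/Rouché). Hence $\widehat{J}^{m,n}_a$, and therefore $J^{m,n}_a$, has $m$ real roots for $n\gg1$. If Step 2 cannot be pushed through with the $o(1)$ error alone, the fallback is to read \ref{GenGORZThm1assump1} as giving, uniformly for $j$ in the range, the expansion with error $o(\d(n)^m)$ as in \cite[Theorem 3]{GORZ}. Under that reading Steps 1 and 3 complete the proof.
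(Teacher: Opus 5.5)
Your route is the paper's: renormalize by $x\mapsto(\delta(n)x-1)e^{-A(n)}$ and divide by $a(n)\delta^m(n)$, identify the limit $H_m(\frac x2)$ via the finite-difference identities of Lemma~\ref{useful}, and conclude from the simplicity of the real zeros of $H_m$. But what you wrote does not prove the statement as given. The workaround sketched in Step 2 is not an argument. First, $\widetilde\delta^2(n)=-\frac12\log\frac{a(n)a(n+2)}{a^2(n+1)}$ need not be positive. Second, hyperbolicity of a subsequential limit does not transfer back to the approximants unless that limit has simple real zeros (think of $x^2+\varepsilon\to x^2$). Your fallback replaces the $o(1)$ in \ref{GenGORZThm1assump1} by $o(\delta^m(n))$. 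That is a different theorem, essentially \cite[Theorem 3]{GORZ} with $A_k\equiv 0$, and for it your Steps 1 and 3 are correct.

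The obstruction you isolated in Step 2 is, however, genuine and cannot be removed, because with only an $o(1)$ error the statement is false. Take $a(n):=\exp\bigl(\frac{(-1)^n}{n+1}\bigr)$, $A(n):=0$, $\delta(n):=\frac{1}{n+1}$. For each fixed $j$, $\log\frac{a(n+j)}{a(n)}=O(\frac1n)=A(n)j-\delta^2(n)j^2+o(1)$, and \ref{GenGORZThm1assump2} holds. Yet for every even $n$ we have $a^2(n+1)=e^{-\frac{2}{n+2}}<1<e^{\frac{1}{n+1}+\frac{1}{n+3}}=a(n)a(n+2)$. By Newton's inequalities, $J^{m,n}_a$ therefore has non-real zeros for every $m\ge 2$; for $m=2$ this is just the discriminant.

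The paper's proof has exactly the hole you found. In \eqref{Jenseneqn1}, the factor $1+o(1)$, which is really $e^{\epsilon_j(n)}$ and depends on $j$, is pulled out of the sum over $j$ as a common factor. This is illegitimate, since that sum is of size $\delta^m(n)$ only after cancellation among summands of size $O(1)$. The discarded error is therefore $o(1)$ rather than $o(\delta^m(n))$, and it is uncontrolled after division by $\delta^m(n)$. What is actually needed, for the paper's argument as well as yours, is precisely your fallback: an expansion of $\log\frac{a(n+j)}{a(n)}$ with error $o(\delta^m(n))$.
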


 \Cref{GenGORZcor1} gives a general structural result for Jensen polynomials. We next investigate partial converses of this phenomenon. Our main goal is to develop a general analytic framework that partially reverses the Hermite–Jensen phenomenon of \cite{GORZ}. Roughly speaking, we show that eventual log-concavity together with mild asymptotic conditions forces Jensen polynomials to become hyperbolic. The starting point of this paper is to establish partial converses of Theorem~\ref{GenGORZcor1}. Specifically, we ask to what extent does eventual log-concavity imply asymptotic shapes of the type considered\footnote{ For the correct shape of the asymptotics stated in \cite[equation (4)]{GORZ}, we refer the reader to \cite{GORZ}.} in \cite[(15)]{GORZ}, and hence the Hermite--Jensen phenomenon underlying Theorem~\ref{GenGORZcor1}. In many natural examples, this indeed holds. We show such partial converses by extending the methods of \cite{GORZ}.  Our new methods give a framework to prove new inequalities in a number of crucial cases. In Lemma~\ref{sec:Jensenlem1}, we show that the corresponding Jensen polynomials are hyperbolic for fixed degree and $n\!\gg\!1$. This allows us to resolve several conjectures. 
 
 Before describing the general framework, we give two applications. For $k\ge 2$, let the {\it $k$-regular overpartitions} $\overline{p}_k(n)$ denote the number of overpartitions of $n$ such that no part is divisible by $k$.  Here {\it overpartitions} of $n$, denoted by $\overline{p}(n)$, count partitions of $n$ where the first occurrence of a part may be overlined. The generating function of $\overline{p}_k(n)$ is, using the {\it $q$-Pochhammer symbol} $(a;q)_n\!:=\!\prod_{j=0}^{n-1}\left(1\!-\!a q^j\right)$ (for $n \in \mathbb{N}_0\cup \{\infty\}$), given by 
\begin{equation}\label{genfunc1}
P_k(q):=\sum_{n\ge 0}\overline{p}_k(n)q^n:=\frac{(-q)_{\infty}\left(q^k;q^k\right)_{\infty}}{(q)_{\infty}\left(-q^k;q^k\right)_{\infty}}.
\end{equation}

 Peng, Zhang, and Zhong \cite[Conjecture 5.1]{PZZ} made the following conjecture. 
\begin{conjecture}\label{conj0}
 {\it For $k\ge 2$, $\overline{p}_k(n)$ satisfies \eqref{log-concave} and \eqref{HigherOrderTuranDef} for $n\gg 1$.} 
\end{conjecture}
For $k\in \mathbb{N}$, Andrews and Paule \cite{AP} introduced broken $k$-diamond partitions (see \cite[Definition 4]{AP}). The counting function is denoted by $b_k(n)$ with generating function 
\begin{equation}\label{genfunc2}
B_k(q):=\sum_{n\ge 0}b_k(n)q^n=\frac{\left(-q\right)_{\infty}}{\left(q\right)^2_{\infty}\left(-q^{2k+1};q^{2k+1}\right)_{\infty}}.
\end{equation}
Dong, Ji, and Jia \cite[p. 614]{DJJ} made the following conjecture.
\begin{conjecture}\label{conj1}
{\it 	For $k\in \mathbb{N}_{\ge 3}$ and $m\in \mathbb{N}$, the Jensen polynomials $J^{m,n}_{b_k}$ are hyperbolic for $n\!\gg\!1$.} 
\end{conjecture}
\begin{theorem}\label{ConjThm1} 
Conjectures \ref{conj0} and \ref{conj1} hold. 
\end{theorem}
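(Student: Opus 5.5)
Both conjectures will follow from Theorem~\ref{GenGORZcor1}. Log-concavity and the higher order Tur\'an inequality for $\overline{p}_k(n+1)$ are exactly the hyperbolicity of $J^{2,n}_{\overline{p}_k}$ and $J^{3,n}_{\overline{p}_k}$. So it suffices to prove that $\overline{p}_k$ and $b_k$ satisfy \ref{GenGORZThm1assump1} for every fixed range $0\le j\le m$, together with \ref{GenGORZThm1assump2}. The plan is as follows. First derive asymptotics of the shape \eqref{genexample} for both sequences. Then show that any such sequence satisfies \ref{GenGORZThm1assump1}--\ref{GenGORZThm1assump2}, by Taylor expanding $\log a_d(n+j)$ in $j$.

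\emph{Step 1 (asymptotics).} Both generating functions are eta-quotients. Indeed,
$P_k(q)=\frac{\eta(2\tau)\eta(k\tau)^3}{\eta(\tau)^2\eta(2k\tau)}$ and $B_k(q)=\frac{\eta(2\tau)\eta((2k+1)\tau)}{\eta(\tau)^3\eta(2(2k+1)\tau)}$, up to powers of $q$. Using the modular transformation of $\eta$ near $q=1$, I would compute the exponential growth $\log P_k(e^{-t})\sim c/t$ as $t\to0^+$. For $P_k$ this gives $c=\frac{\pi^2}{4}(1-\frac1k)$, which is positive for $k\ge2$. For $B_k$ one gets $c=\frac{\pi^2}{6}\left(\frac{5}{2}-\frac{1}{2(2k+1)}\right)>0$. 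One also has to check that the dominant cusp is $q=1$, i.e.\ that other roots of unity give strictly smaller growth. Then a standard circle method or Ingham-type Tauberian argument (with monotonicity following from the product having a partition interpretation with nonnegative, eventually increasing coefficients) yields
$a(n)\sim c_1 n^{-c_2}e^{2\sqrt{cn}}$.
This is \eqref{genexample} with $\mathcal S=\{\tfrac12\}$ and $A_{1/2}=2\sqrt c>0$, so \ref{genexampleassump1}--\ref{genexampleassump4} hold. If a Tauberian theorem is used, I would need the more precise form with the polynomial factor. For that I would instead invoke the exact Rademacher-type expansion (Zuckerman's formula for eta-quotients) or Wright's circle method. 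In each case the error must be a relative $o(1)$ that is uniform in $n+j$ for bounded $j$.

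\emph{Step 2 (checking the conditions).} For $a_d$ as in \eqref{genexample} with fixed bounded $j$, write
\[
\log\frac{a_d(n+j)}{a_d(n)}=\sum_{\l\in\mathcal S}A_\l(d)\bigl((n+j)^\l-n^\l\bigr)-c_2(d)\log\Bigl(1+\frac jn\Bigr)+o(1).
\]
Expand by Taylor's theorem: $(n+j)^\l-n^\l=\l n^{\l-1}j+\frac{\l(\l-1)}2n^{\l-2}j^2+O(n^{\l-3})$. Then set
$A(n)=\sum_\l A_\l\l n^{\l-1}-\frac{c_2}n$ and $\d^2(n)=\sum_\l A_\l\frac{\l(1-\l)}2n^{\l-2}$. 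All remaining terms are $o(1)$. The term $\l^*$ dominates $\d^2(n)$, and it is positive because $A_{\l^*}>0$ and $0<\l^*<1$. Hence $\d(n)>0$ for large $n$ and $\d(n)\to0$, so \ref{GenGORZThm1assump2} holds. Theorem~\ref{GenGORZcor1} then gives hyperbolicity of $J^{m,n}$ for every $m$ and $n\gg1$. This proves Conjecture~\ref{conj1}, and also Conjecture~\ref{conj0} via $m=2,3$.

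\emph{Main obstacle.} The main obstacle is Step 1. One needs asymptotics that are precise to a relative $o(1)$, including the correct polynomial factor $n^{-c_2}$, and this requires identifying all cusps of maximal growth. For $b_k$ with $k\ge3$ I would compute the growth exponent at each root of unity $\zeta$ of order dividing $2(2k+1)$, using $\eta$-multiplier formulas. I expect the restriction $k\ge3$ to be exactly what makes $q=1$ the unique dominant cusp; otherwise oscillating contributions of equal size would break \ref{GenGORZThm1assump1}. I would therefore tabulate, for each divisor $c$ of the level, the quantity $\sum_\ell r_\ell\gcd(c,\ell)^2/\ell$ coming from the eta-quotient, and show that it is maximized only at $c=1$.
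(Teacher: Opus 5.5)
\textbf{There is a genuine gap.} Your outline is the paper's own argument: leading-order asymptotics as in Lemmas \ref{overptnlem1} and \ref{brokenlem1}, the Taylor expansion of Lemma \ref{fact2}, then \Cref{GenGORZcor1}. It breaks exactly where you declare the problem reduced. Verifying \ref{GenGORZThm1assump1} with an $o(1)$ error, together with \ref{GenGORZThm1assump2}, does not imply hyperbolicity. Since $\delta(n)\to0$, the term $\delta^2(n)j^2$ is itself $o(1)$ for bounded $j$, so these conditions say nothing about second differences of $\log a(n)$. Two counterexamples:
\begin{itemize}
\item $a(n)=1+\frac1n$ satisfies both conditions with $A(n)=0$ and $\delta(n)=n^{-1/2}$. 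Yet $a(n+1)^2<a(n)a(n+2)$ for all $n\geq1$, by strict convexity of $\log(1+\frac1x)$, so $J^{2,n}_a$ is never hyperbolic.
\item $a(n)=p(n)(1+\frac{(-1)^n}{n})$ ($n\geq2$) is asymptotic to the main term in \eqref{pnasymp}, hence has the shape \eqref{genexample}. But $2\log a(n)-\log a(n-1)-\log a(n+1)=-\frac4n+O(n^{-3/2})<0$ for large odd $n$.
\end{itemize}
So \Cref{GenGORZcor1} and Lemma \ref{sec:Jensenlem1}, on which the paper's proof also rests, fail as stated. No relative $o(1)$ asymptotic, whether from a Tauberian theorem or a truncated circle method, can give even log-concavity.

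The faulty step is \eqref{Jenseneqn1}. Each summand carries its own factor $1+\epsilon_j(n)$ with $\epsilon_j(n)=o(1)$. Because the main terms cancel down to size $\delta^m(n)$, the discarded piece $\delta^{-m}(n)\sum_{j=0}^{m}\binom{m}{j}\epsilon_j(n)e^{-\delta^2(n)j^2}(\delta(n)x-1)^j$ is in general of size $\delta^{-m}(n)\max_j|\epsilon_j(n)|$. This is why \Cref{GORZThm1} demands an error $o(\delta^m(n))$, after removing the terms $A_k(n)j^k$ with $A_k(n)=o(\delta^k(n))$.

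\textbf{How to close the gap.} You must verify the hypotheses of \Cref{GORZThm1} itself. For $\overline{p}_k$ and $b_k$ one has $\delta(n)\asymp n^{-3/4}$. So you need an expansion $a(n)=C_0n^{-\kappa}e^{2\sqrt{\gamma n}}\bigl(1+\sum_{i=1}^{M}e_in^{-i/2}+O(n^{-(M+1)/2})\bigr)$ with $M+1>\frac{3m}{2}$. Your Step 2 applied to such an expansion gives $A_k(n)=O(n^{\frac12-k})=o(\delta^k(n))$ for $k\geq3$, and an error $o(\delta^m(n))$. A Tauberian theorem yields only the leading term. The circle method (e.g.\ Zuckerman's exact formula for eta-quotients) yields the full expansion once $q=1$ is the unique dominant cusp. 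So your cusp analysis is the right ingredient, but it is needed to get all the lower-order terms, not just the leading one.

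\textbf{Your guess about $k\geq3$ is wrong.} Let $\zeta$ be a primitive $c$-th root of unity. Then, as $t\to0^+$, $\log|B_k(\zeta e^{-t})|=\frac{\pi^2}{6t}E(c)+O(\log\frac1t)$, where $E(c)=\frac{1}{c^2}\bigl(3-\frac{\gcd(c,2)^2}{2}-\frac{\gcd(c,2k+1)^2}{2k+1}+\frac{\gcd(c,4k+2)^2}{4k+2}\bigr)$. For $c\geq2$ this is at most $\frac{3}{c^2}+\frac1c\leq\frac54$, whereas $E(1)=\frac{5k+2}{2k+1}\geq\frac73$. Thus $q=1$ is the unique dominant cusp for every $k\geq1$, and the same check works for $P_k$. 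The restriction $k\geq3$ in Conjecture \ref{conj1} has nothing to do with competing cusps. Note the factor $c^{-2}$: it is missing from the quantity you proposed to tabulate, and without it the cusp $c=4k+2$ would appear to dominate.

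\textbf{A typo.} The eta-quotient for $P_k$ has $\eta(k\tau)^2$, not $\eta(k\tau)^3$. Your growth constant $\frac{\pi^2}{4}(1-\frac1k)$ is the one for the square.
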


 We now describe our general framework (see \Cref{secgen} for more details and additional examples) for studying inequalities among terms of arithmetic sequences with asymptotic conditions \ref{GenGORZThm1assump1}--\ref{genthmassump2}. The following theorem is the main technical tool underlying many of the applications in this paper. For $T\in \mathbb{N}, M_{\ell},\rho_{\ell}(m)\in \mathbb{N}$ and $s_{\ell}(k,m)\in \mathbb{Z}$ with $\ell\in\{1,2\}$, we define
\begin{equation}\label{gendefn}
P^{[M_{\ell}]}_{\rho_{\ell},s_{\ell}}(a)(n):=\sum_{m=1}^{M_{\ell}}\rho_{\ell}(m)\prod_{k=1}^{T}a(n+s_{\ell}(k,m)),\ I^{[\bm{M}]}_{\bm{\rho},\bm{s}}(a)(n):=\frac{P^{[M_1]}_{\rho_1,s_1}(a)(n)-P^{[M_2]}_{\rho_2,s_2}(a)(n)}{a^{T}(n)}.
\end{equation}
Setting $u_1:=1$ and $u_T:=2$ if $T\ge 2$, we express the asymptotics of $I^{[\bm{M}]}_{\bm{\rho},\bm{s}}(a)(n)$ in terms of
\begin{equation}\label{newdef3}
D_t:=\sum_{\ell=1}^{2}(-1)^{\ell+1}\sum_{m=1}^{M_{\ell}}\rho_{\ell}(m) \left(\hspace{0.05 cm} \sum_{k=1}^{T}s^{u_T}_{\ell}(k,m)\right)^t,  
\end{equation}
\begin{equation}\label{newdef2}
\hspace{-3.5 cm}\mathcal{M}(n):=\begin{cases}
-\d^2(n)&\ \ \text{if}\ T\ge 2,\\
A(n)&\ \ \text{if}\ \ T=1,
\end{cases}
\end{equation} 
 where $t\in \mathbb{N}_0$ and $\{A(n)\}_{n\ge 0}$ and $\{\d(n)\}_{n\ge 0}$ are as in \ref{GenGORZThm1assump1}.  We assume the following:
 \begin{enumerate}[label=(C\arabic*), start=11,  labelwidth=\widthof{(C99)}, 
 	leftmargin=\labelwidth+\labelsep, 
 	align=left]
 \item  \label{assump0}	There exists $N\in \mathbb{N}$ such that for all $0\le t\le N-1$, $D_t=0$.
 \item \label{assump1} We have $D_N\neq 0$.
 \end{enumerate}
Define 
\begin{equation*}
\mathcal{C}:=\max\hspace{0.05 cm}\{|s_{\ell}(k,m)|:1\le \ell\le 2, 1\le k\le T, 1\le m\le M_{\ell}\}.
\end{equation*}
 The following theorem is one of the technical backbones of the paper.
\begin{theorem}\label{genthm}
	Let $\{a(n)\}_{n\ge 0}$ satisfy {\rm{\ref{GenGORZThm1assump1}}} with $|j|\le \mathcal{C}$, {\rm{\ref{GenGORZThm1assump2}}}, and {\rm{\ref{genthmassump2}}} and assume {\rm{\ref{assump0}}} and {\rm{\ref{assump1}}}. Then we have 
	\begin{align*}
	&I^{[\bm{M}]}_{\bm{\rho},\bm{s}}(a)(n)\sim \frac{D_N}{N!}\mathcal{M}^N(n)\ \ \left(\text{as}\ n\to\infty\right).
	\end{align*}
\end{theorem}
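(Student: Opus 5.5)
The plan is to expand each product appearing in $I^{[\bm{M}]}_{\bm{\rho},\bm{s}}(a)(n)$ with \ref{GenGORZThm1assump1}, Taylor expand the resulting exponentials, and use \ref{assump0} and \ref{assump1} to show that the first surviving term is $\frac{D_N}{N!}\mathcal{M}^N(n)$. For each $\ell\in\{1,2\}$ and $1\le m\le M_\ell$ put
\[
S_1(\ell,m):=\sum_{k=1}^T s_\ell(k,m),\qquad S_2(\ell,m):=\sum_{k=1}^T s_\ell^2(k,m).
\]
Since $|s_\ell(k,m)|\le\mathcal{C}$, \ref{GenGORZThm1assump1} applies to every shift. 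Summing over $k$ gives
\[
\prod_{k=1}^T\frac{a(n+s_\ell(k,m))}{a(n)}=\exp\!\Big(A(n)S_1(\ell,m)-\d^2(n)S_2(\ell,m)+o(1)\Big).
\]
Hence $I^{[\bm{M}]}_{\bm{\rho},\bm{s}}(a)(n)=\sum_{\ell}(-1)^{\ell+1}\sum_m\rho_\ell(m)\exp(\cdots)$, a finite signed sum of such exponentials.

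\emph{Case $T=1$.} Here $\mathcal{M}=A$ and $u_1=1$, so each exponent is $A(n)s-\d^2(n)s^2$ with $s=S_1$. By \ref{genthmassump2} and \ref{GenGORZThm1assump2} the exponents tend to $0$, and I would expand
\[
e^{x}=\sum_{t\le N}\frac{x^t}{t!}+O(|x|^{N+1}).
\]
Summing against the signed weights $(-1)^{\ell+1}\rho_\ell(m)$, the pure $A^t$ parts give $\frac{D_t}{t!}A^t(n)$. By \ref{assump0} these vanish for $t<N$, and for $t=N$ we get $\frac{D_N}{N!}A^N(n)$, which is nonzero by \ref{assump1}.

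\emph{Case $T\ge 2$.} Here $u_T=2$, and the same expansion in powers of $-\d^2(n)S_2(\ell,m)$ produces $\frac{D_t}{t!}(-\d^2(n))^t$. Again \ref{assump0} and \ref{assump1} isolate $\frac{D_N}{N!}\mathcal{M}^N(n)$ as the leading term.

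What remains is to show that every other contribution is $o(\mathcal{M}^N(n))$. These are the mixed monomials $A^{i}\d^{2j}$, the Taylor tails of order $N+1$, and the $o(1)$ errors.

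The main obstacle is the error term. An unquantified $o(1)$ inside the exponent is not enough, because $\mathcal{M}^N(n)\to 0$. I would therefore first sharpen \ref{GenGORZThm1assump1} to the refined form in which it is actually used: the $o(1)$ term is replaced by an error that, after exponentiation and cancellation, is $o(\mathcal{M}^N(n))$. In the concrete examples, a full asymptotic expansion in powers of $n^{-1}$, or of $n^{-\l}$ for \eqref{genexample}, gives this. I would state this as the precise reading of the hypotheses.

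For the mixed terms when $T\ge 2$, the linear parts must not interfere. In the intended applications the shift patterns are balanced, i.e., $S_1(\ell,m)$ is the same for all $(\ell,m)$; examples are $n^2$ against $(n-1)(n+1)$ and the higher Tur\'an combinations. I would use this balancing, or the weaker statement that the signed moments of $S_1$ cancel to the required order, to factor out the common term $e^{A(n)S_1}=1+o(1)$. After that factorization only the $\d^2$-expansion remains, and the argument of the previous paragraph finishes the proof. I expect that pinning down exactly which cancellation of the $A$-parts is implicit in the definition \eqref{newdef3} of $D_t$, and checking it in the applications, will be the most delicate step.
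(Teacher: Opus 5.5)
Your route is the paper's route: expand each product via (C1), Taylor-expand the exponentials, and let (C11)--(C12) isolate $\frac{D_N}{N!}\mathcal{M}^N(n)$. You correctly locate the two points where this breaks, and the paper's proof does not resolve them either. In \eqref{genthmeqn15} it pulls one factor $(1+o(1))(1+O(A(n)))$ out of the \emph{difference} $P_1-P_2$, although the main terms cancel down to size $\mathcal{M}^N(n)\to0$. In fact the statement as written is false, so no argument from (C1)--(C3), (C11), (C12) alone can work. For $a(n)=e^{\sqrt n}(1+(-1)^n/\log(n+3))$ with the data (E1), $I^{[\bm{M}]}_{\bm{\rho},\bm{s}}(a)(n)$ has size $\asymp 1/\log n$ and alternates in sign. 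For $a(n)=e^{\sqrt n}$ and $a^2(n+1)-a(n-1)a(n)$ (so $T=2$, $D_t=2^t-1$, $N=1$), it is $\sim\frac{3}{2\sqrt n}$, not $\sim-\delta^2(n)=-\frac{1}{8n^{3/2}}$. So your proposal does not prove the statement; it proves a modified one. The modification you describe, an error that ``after exponentiation and cancellation'' is $o(\mathcal{M}^N(n))$, is the conclusion restated rather than a checkable hypothesis.

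The repair is also insufficient in the form you give it. First, a full asymptotic expansion does not supply the required error. The Taylor terms $A_\ell(n)j^\ell$, $\ell\ge 3$ (cf.\ \eqref{neweqn9}), already enter at first order through signed moments such as $\sum_{\ell}(-1)^{\ell+1}\sum_m\rho_\ell(m)\sum_k s_\ell^{3}(k,m)$, which need not vanish. Meanwhile $A_3(n)\asymp n^{\lambda^*-3}$ is much larger than $\delta^{2N}(n)\asymp n^{N(\lambda^*-2)}$ for $N\ge 2$. Briggs' setup (E4) is balanced, with $D_0=D_1=0$ and $D_2=8$, but its cubic moment is $1+7-2=6$. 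Hence for $a(n)=e^{\sqrt n}$ one gets $B(a)(n)\sim\frac38 n^{-5/2}a^3(n)$, rather than $4\delta^4(n)a^3(n)=\frac1{16}n^{-3}a^3(n)$. The same happens for $p(n)$, so the asymptotic in Corollary~\ref{genthmcor1} is affected; only the positivity conclusion survives, because $A_3>0$. Second, for $T=1$ your sketch never disposes of the mixed terms. The terms of total degree $N$ contribute $D_N\sum_{q}\frac{A^{N-2q}(n)(-\delta^2(n))^q}{(N-2q)!\,q!}$. For $N\ge 2$ this reduces to $\frac{D_N}{N!}A^N(n)$ only if $\delta^2=o(A^2)$, which (C1)--(C3) do not give. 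Take $a(n)=n+1$ and $\Delta^{[2]}$: here $N=2$, $D_2=2$, $A=\frac{1}{n+1}$, $\delta^2=\frac{1}{2(n+1)^2}$, and the error is $O(n^{-3})$. The left-hand side vanishes identically, while the claim is $\sim (n+1)^{-2}$. A correct version needs explicit extra hypotheses: $\sum_k s_\ell(k,m)$ independent of $(\ell,m)$ when $T\ge 2$; $\delta^2=o(A^2)$ when $T=1$; and, for each higher Taylor coefficient, either vanishing of the corresponding signed moment or smallness relative to $\mathcal{M}^N(n)$.
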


As an application of \Cref{genthm}, we determine the asymptotics of $\mathcal{D}^{[\bm{M}]}_{\bm{\rho},\bm{s}}(a_d)(n)$, as $n\to \infty$.

\begin{corollary}\label{Corgenthm}
	Let $\{a_d(n)\}_{n\ge 0}$ be as in \eqref{genexample} and assume {\rm{\ref{assump0}}} and {\rm{\ref{assump1}}}. 
	\begin{enumerate}
		\item \label{Corgenthmeqn1}	For $T\ge 2$, we have, as $n\to \infty$,
		\[
		\mathcal{D}^{[\bm{M}]}_{\bm{\rho},\bm{s}}(a_d)(n)\sim \frac{\left(-\!\l^*(1-\l^*)\hspace{0.05 cm}A_{\l^*}(d)\right)^N\! D_N\hspace{0.05 cm}c^T_1(d)}{2^N N!}\frac{\exp\left(T\sum_{\l\in \mathcal{S}}A_{\l}(d)n^{\l}\right)}{n^{Tc_2(d)+\left(2-\l^*\right)N}}.
		\]
		\item \label{Corgenthmeqn2} For $T=1$, we have, as $n\to \infty$,
		\[
		\mathcal{D}^{[\bm{M}]}_{\bm{\rho},\bm{s}}(a_d)(n)\sim \frac{\left(\l^*A_{\l^*}(d)\right)^N\! D_N\hspace{0.05 cm}c_1(d)}{N! }\frac{\exp\left(\sum_{\l\in \mathcal{S}}A_{\l}(d)n^{\l}\right)}{n^{c_2(d)+\left(1-\l^*\right)N}}.
		\]
	\end{enumerate}
\end{corollary}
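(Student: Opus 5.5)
Throughout I read $\mathcal{D}^{[\bm{M}]}_{\bm{\rho},\bm{s}}(a_d)(n)$ as the unnormalized difference $P^{[M_1]}_{\rho_1,s_1}(a_d)(n)-P^{[M_2]}_{\rho_2,s_2}(a_d)(n)=a_d^T(n)\,I^{[\bm{M}]}_{\bm{\rho},\bm{s}}(a_d)(n)$. The plan is to reduce the corollary to Theorem \ref{genthm}. I will check that every $\{a_d(n)\}_{n\ge 0}$ as in \eqref{genexample} satisfies \ref{GenGORZThm1assump1} (with $|j|\le\mathcal{C}$), \ref{GenGORZThm1assump2}, and \ref{genthmassump2}, with explicit and natural choices of $A(n)$ and $\d(n)$. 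Theorem \ref{genthm} then gives $I\sim \frac{D_N}{N!}\mathcal{M}^N(n)$. Multiplying by $a_d^T(n)\sim c_1^T(d)\exp(T\sum_{\l\in\mathcal{S}}A_\l(d)n^\l)\,n^{-Tc_2(d)}$ yields the stated formulas.

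\emph{Step 1 (the expansion).} For fixed $j$ with $|j|\le \mathcal{C}$, the asymptotic \eqref{genexample} gives
\[
\log\frac{a_d(n+j)}{a_d(n)}=\sum_{\l\in\mathcal{S}}A_\l(d)\left((n+j)^\l-n^\l\right)-c_2(d)\log\left(1+\frac jn\right)+o(1).
\]
Taylor expansion gives $(n+j)^\l-n^\l=\l n^{\l-1}j+\frac{\l(\l-1)}{2}n^{\l-2}j^2+O(n^{\l-3})$ and $\log(1+\frac jn)=\frac jn+O(n^{-2})$. Since $\l<1$ by \ref{genexampleassump2}, all remainders are $o(1)$. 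I therefore set
\[
A(n):=\sum_{\l\in\mathcal{S}}\l A_\l(d)n^{\l-1}-\frac{c_2(d)}{n},\qquad \d^2(n):=\sum_{\l\in\mathcal{S}}\frac{\l(1-\l)}{2}A_\l(d)n^{\l-2},
\]
so that \ref{GenGORZThm1assump1} holds. The term $\frac{c_2(d)j^2}{2n^2}$ can be absorbed into the $o(1)$, or into $\d^2$; this is harmless.

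\emph{Step 2 (positivity and limits).} The $\l^*$-term dominates $\d^2(n)$, and by \ref{genexampleassump3} its coefficient $\frac{\l^*(1-\l^*)}{2}A_{\l^*}(d)$ is positive. Hence $\d^2(n)>0$ for $n\gg1$, and I take $\d(n)$ to be the positive square root. Moreover
\[
\d^2(n)\sim\frac{\l^*(1-\l^*)A_{\l^*}(d)}{2}\,n^{\l^*-2}\to0 .
\]
Similarly, since $\l^*>0$ the term $c_2(d)/n$ is of lower order, so $A(n)\sim \l^*A_{\l^*}(d)\,n^{\l^*-1}\to0$. This verifies \ref{GenGORZThm1assump2} and \ref{genthmassump2}. (Assumption \ref{genexampleassump1} guarantees that the maximum $\l^*$ is genuinely present.)

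\emph{Step 3 (conclusion).} Apply Theorem \ref{genthm} using \ref{assump0} and \ref{assump1}, and insert the asymptotics of $\mathcal{M}(n)$. For $T\ge2$ we have $\mathcal{M}(n)=-\d^2(n)$, so $\mathcal{M}^N(n)\sim\left(-\l^*(1-\l^*)A_{\l^*}(d)\right)^N2^{-N}n^{-(2-\l^*)N}$. For $T=1$ we have $\mathcal{M}^N(n)\sim(\l^*A_{\l^*}(d))^N n^{-(1-\l^*)N}$. Multiplying by $a_d^T(n)$ and using that products of asymptotic equivalences are asymptotic equivalences gives \eqref{Corgenthmeqn1} and \eqref{Corgenthmeqn2}.

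\emph{Main obstacle.} The delicate point is Step 1. One must make sure the $o(1)$ error inherited from the mere asymptotic ``$\sim$'' in \eqref{genexample} is exactly the kind of error permitted in \ref{GenGORZThm1assump1}. I will use Theorem \ref{genthm} exactly as stated with this hypothesis. One must also identify the leading behaviour of $A(n)$ and $\d(n)$ correctly, which requires $\l^*>0$ and \ref{genexampleassump3}. Everything else is bookkeeping.
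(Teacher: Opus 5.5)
Your outline is the paper's own proof. You verify \ref{GenGORZThm1assump1}--\ref{genthmassump2} for $a_d$ (this is \Cref{fact2}\,\ref{fact2part1}), apply \Cref{genthm}, and multiply by $a_d^T(n)$, and the bookkeeping in Step 3 is right. However, the step you flagged as the main obstacle is a genuine gap, and invoking \Cref{genthm} ``exactly as stated'' does not close it. With only an $o(1)$ error in \ref{GenGORZThm1assump1} that theorem is false, and so is the corollary under the bare hypothesis ``$\sim$''. The reason is that $I^{[\bm{M}]}_{\bm{\rho},\bm{s}}(a_d)(n)$ is a cancelling combination of size $n^{-(2-\lambda^*)N}$ (resp.\ $n^{-(1-\lambda^*)N}$). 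By contrast, \eqref{genexample} fixes each monomial $\prod_k a_d(n+s_\ell(k,m))/a_d^T(n)$ only up to its own factor $1+o(1)$. These factors differ between the monomials of $P_1$ and $P_2$, so they leave an additive $o(1)$ in $I$. The paper hides this in \eqref{genthmeqn15}, where $1+o(1)$ and $1+O(A(n))$ are pulled out of a difference as if they were common. For a concrete failure, take $a(n):=e^{\sqrt n}\bigl(1+\frac{(-1)^n}{n+2}\bigr)$. It satisfies \eqref{genexample} with $\mathcal{S}=\{\frac12\}$, $A_{1/2}(d)=c_1(d)=1$, $c_2(d)=0$, and hence, by \Cref{fact2}, all hypotheses of \Cref{genthm}. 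For the data \ref{LC} ($N=1$, $D_1=-2$) the corollary predicts $a^2(n+1)-a(n)a(n+2)\sim\frac14e^{2\sqrt n}n^{-3/2}$. But for even $n$ this difference equals $e^{2\sqrt{n+1}}\bigl(-\frac4n+O(n^{-3/2})\bigr)<0$. What is actually needed is an expansion of $\log\frac{a_d(n+j)}{a_d(n)}$ with error $o(\delta^{2N}(n))$ (resp.\ $o(A^N(n))$), as in the hypothesis of \Cref{GORZThm1}, e.g.\ coming from a full asymptotic expansion of $a_d(n)$. For $N\ge2$ the term $c_2(d)j^2/(2n^2)$ then has to stay inside $\delta^2(n)$, as in the paper, rather than being absorbed into the error.

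Even with such an expansion, part \ref{Corgenthmeqn1} needs more than \ref{assump0}--\ref{assump1}. Put $G(x):=\sum_{\lambda\in\mathcal{S}}A_\lambda(d)x^\lambda-c_2(d)\log x$ and $\Sigma_p(\ell,m):=\sum_{k=1}^T s_\ell(k,m)^p$. The expansion of $I$ then involves all mixed moments $\sum_{\ell=1}^2(-1)^{\ell+1}\sum_m\rho_\ell(m)\prod_i\Sigma_{p_i}(\ell,m)$, multiplied by monomials in the $G^{(p_i)}(n)\asymp n^{\lambda^*-p_i}$. The $D_t$ only record the case where all $p_i=2$. First, if $\Sigma_1(\ell,m)$ is not constant, the linear terms do not factor out. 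For $a(n)=e^{\sqrt n}$ and $a^2(n+1)-a(n)a(n+1)$ one has $N=1$ and $D_1=1$, so the corollary predicts $-\frac18e^{2\sqrt n}n^{-3/2}$, whereas in fact the difference is $\sim\frac{1}{2\sqrt n}e^{2\sqrt n}$. Second, if a cubic moment survives, it dominates. For the Briggs data \ref{Briggsineq3} one finds $B(a)(n)/a^3(n)=e^{A(n)}\bigl(4\delta^4(n)+G'''(n)+\cdots\bigr)$ with $G'''(n)\asymp n^{\lambda^*-3}\gg\delta^4(n)$. So for $a(n)=e^{\sqrt n}$ one gets $B(a)(n)\sim\frac38n^{-5/2}e^{3\sqrt n}$, not the predicted $\frac1{16}n^{-3}e^{3\sqrt n}$; positivity survives, but the asymptotic does not. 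A correct version of \ref{Corgenthmeqn1} should also assume two things: that $\Sigma_1(\ell,m)$ is independent of $(\ell,m)$, and that all mixed moments with $p_i\ge2$ and $\sum_ip_i<2N$ vanish. The latter holds for $L_r$ by \Cref{useful} and fails for Briggs. Under these assumptions only $\frac{D_N}{N!}(-\delta^2(n))^N$ survives at order $n^{-(2-\lambda^*)N}$, and your Step 3 goes through. For $T=1$ the mixed moments are exactly the $D_t$, so \ref{Corgenthmeqn2} needs only the sharper error term.
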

The results above provide a general framework that allows us to treat a variety of inequalities for partition-type functions in a unified way. In particular, we resolve several conjectures in the literature, which we now describe.

A sequence $\{a(n)\}_{n\ge 0}$ satisfies the {\it Briggs inequality} if
\begin{equation}\label{Briggsineq}
a^2(n)\left(a^2(n)-a(n-1)a(n+1)\right)> a^2(n-1)\left(a^2(n+1)-a(n)a(n+2)\right).
\end{equation}

Denote by $\{p_k(n)\}_{n\ge 0}$ the number of {\it $k$-regular partitions} of $n$, i.e., no part is divisible by $k$. 

\begin{conjecture}{\rm{(\cite[Conjecture 3.11]{LZ})}}\label{LZconj}
{\it  For $k\in \mathbb{N}_{\ge 2}$, $p_k(n)$ and $\overline{p}_k(n)$ satisfy \eqref{Briggsineq} for $n\!\gg\!1$.}
\end{conjecture} 

In this paper, we settle this conjecture.

\begin{theorem}\label{Briggsptnoverptnk}
Conjecture \ref{LZconj} is true. 
\end{theorem}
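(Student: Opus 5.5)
The plan is to recast \eqref{Briggsineq} as a statement about a single normalized difference $I^{[\bm{M}]}_{\bm{\rho},\bm{s}}(a)(n)$ with $T=4$. Then the sign of its leading asymptotic term can be read off from Corollary \ref{Corgenthm}\eqref{Corgenthmeqn1}, or directly from Theorem \ref{genthm}.

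\textbf{Step 1: rewrite the inequality.} Expanding both sides, \eqref{Briggsineq} is equivalent to the positivity of
\[
a^4(n)+a^2(n-1)a(n)a(n+2)-a^2(n)a(n-1)a(n+1)-a^2(n-1)a^2(n+1).
\]
This has the form $P^{[2]}_{\rho_1,s_1}(a)(n)-P^{[2]}_{\rho_2,s_2}(a)(n)$ with $T=4$ and all $\rho_\ell(m)=1$. The shift vectors are:
\begin{itemize}
\item for $P_1$: $(0,0,0,0)$ and $(-1,-1,0,2)$;
\item for $P_2$: $(0,0,-1,1)$ and $(-1,-1,1,1)$.
\end{itemize}
Hence $\mathcal{C}=2$, and we need \ref{GenGORZThm1assump1} only for $|j|\le 2$. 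Dividing by $a^4(n)$ gives exactly $I^{[\bm{M}]}_{\bm{\rho},\bm{s}}(a)(n)$, so it suffices to show this quantity is positive for $n\gg1$.

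\textbf{Step 2: compute the $D_t$.} Since $T\ge2$, we have $u_T=2$. The sums of squared shifts are $0$ and $6$ for the two terms of $P_1$, and $2$ and $4$ for the two terms of $P_2$. Therefore
\[
D_t=0^t+6^t-2^t-4^t .
\]
This gives $D_0=1+1-1-1=0$, $D_1=6-2-4=0$, and $D_2=36-4-16=16\neq0$. So \ref{assump0} and \ref{assump1} hold with $N=2$ and $D_2=16$.

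Theorem \ref{genthm} then gives
\[
I^{[\bm{M}]}_{\bm{\rho},\bm{s}}(a)(n)\sim 8\,\d^4(n),
\]
since $\mathcal{M}^2(n)=\d^4(n)$. The right-hand side is positive for $n\gg1$ by \ref{GenGORZThm1assump2}. Equivalently, by Corollary \ref{Corgenthm}\eqref{Corgenthmeqn1}, the leading term equals a positive constant times $\left(-\l^*(1-\l^*)A_{\l^*}(d)\right)^2 c_1^4(d)$ times an exponential divided by a power of $n$. It is positive because $N=2$ is even and $D_2>0$, and this already yields the inequality for $n\gg1$.

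\textbf{Step 3: verify the hypotheses for $p_k$ and $\overline{p}_k$.} It remains to check that $p_k(n)$ and $\overline{p}_k(n)$ are of the shape \eqref{genexample}, so that \ref{GenGORZThm1assump1}--\ref{genthmassump2} hold via the analysis behind Corollary \ref{Corgenthm}.

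From the eta-quotient generating functions $(q^k;q^k)_\infty/(q;q)_\infty$ and \eqref{genfunc1}, an Ingham-type Tauberian argument or the circle method (Meinardus-type asymptotics) gives
\[
p_k(n)\sim c\, n^{-3/4}\exp\left(\pi\sqrt{\tfrac{2(k-1)n}{3k}}\right),
\]
and an analogous formula for $\overline{p}_k(n)$ with exponent $\pi\sqrt{(k-1)n/k}$. In both cases $\mathcal{S}=\{1/2\}$, $A_{1/2}>0$ and $c_1>0$, so \ref{genexampleassump1}--\ref{genexampleassump4} hold.

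\textbf{Main obstacle.} The obstacle I expect is precision rather than sign. Corollary \ref{Corgenthm} needs the expansion of $\log(a(n+j)/a(n))$ up to an $o(1)$ error. Since the main term $8\d^4(n)$ decays like $n^{-3}$, cancellations in $I$ must not be swamped by the error terms in the asymptotic formula. So I would first confirm that the paper's passage from \eqref{genexample} to \ref{GenGORZThm1assump1} controls the errors to this order. If it does not, I would fall back on the exact Rademacher/Bessel-type main term, whose error is exponentially smaller, for these eta-quotients.
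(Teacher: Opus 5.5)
Your Step~1 encoding, the values $D_0=D_1=0$ and $D_2=16$, and the asymptotics in Step~3 are all correct, and this is essentially the paper's route. The paper reduces to log-concavity plus $B(a)(n)$ from \eqref{Briggsineq2} with $T=3$ and $D_2=8$, and then applies Corollary~\ref{Corgenthm}. There is, however, a genuine gap in Step~2: the asymptotic $I(n)\sim 8\delta^4(n)$ is false. Theorem~\ref{genthm} keeps only the linear and quadratic parts of $\log(a(n+j)/a(n))$ and absorbs everything else into the $o(1)$ of \ref{GenGORZThm1assump1}. At the scale $\delta^4(n)\asymp n^{2\lambda^*-4}$ that you need to resolve, the cubic Taylor coefficient is not negligible.

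To see this, take even $\log a(n)=\sum_{\lambda\in\mathcal S}A_\lambda(d)n^\lambda-c_2(d)\log n+\log c_1(d)$ exactly. Then $\log(a(n+j)/a(n))=\sum_{\ell\ge1}A_\ell(n)j^\ell$ with $A_1=A$, $A_2=-\delta^2$, and $A_3(n)\sim\frac16\lambda^*(\lambda^*-1)(\lambda^*-2)A_{\lambda^*}(d)n^{\lambda^*-3}$, as in the proof of Lemma~\ref{fact2}. All four of your monomials have $\sum_k s_k=0$, but their third moments $\sum_k s_k^3$ are $0,6,0,0$. Hence
\[
I(n)=6A_3(n)+8\delta^4(n)+O\bigl(n^{\lambda^*-4}\bigr)\sim \lambda^*(1-\lambda^*)(2-\lambda^*)A_{\lambda^*}(d)\,n^{\lambda^*-3}.
\]
This exceeds $8\delta^4(n)$ by a factor $\asymp n^{1-\lambda^*}$. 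For $a(n)=e^{\sqrt n}$ one finds numerically $I(100)\approx 3.83\cdot 10^{-6}$, while $6A_3(100)=3.75\cdot 10^{-6}$ and $8\delta^4(100)=1.25\cdot 10^{-7}$. So the leading term is wrong even for an exactly known sequence; this is not merely a precision issue.

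The paper's route has the same defect. The shifts $(0,0,1),(-1,0,2),(-1,1,1)$ in $B(a)(n)$ also give third-moment combination $1+7-2=6$. Indeed $B(a)(n)=a(n-1)a(n)a(n+1)\bigl(r(n+1)-2r(n)+r(n-1)\bigr)$ with $r(n):=a(n+1)/a(n)$, so its size is governed by $(\log a)'''$, not by $\delta^4$. Your encoding and the paper's are linked by $a^4(n)I(n)=\bigl(a^2(n)-a(n-1)a(n+1)\bigr)^2+a(n-1)B(a)(n)$.

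The conclusion survives, because the true leading coefficient is positive for $\lambda^*\in(0,1)$. But the argument must be redone at third order, and your precision worry is also justified. The bare $\sim$ in \eqref{genexample} cannot give \eqref{Briggsineq}, so Lemma~\ref{fact2} and Corollary~\ref{Corgenthm} cannot be used as black boxes. For example, $a(n)=e^{\sqrt n}\bigl(1+\frac{(-1)^n}{n+1}\bigr)$ has the shape \eqref{genexample}, yet $I(n)=8(-1)^n/n+O(n^{-2})$, so \eqref{Briggsineq} fails for every large odd $n$.

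What you need is $\log a(n)=G(n)+o(n^{-5/2})$, where $G$ is smooth with $G^{(r)}(n)\ll_r n^{\frac12-r}$ and $G'''(n)\sim\frac{3\mu}{8}n^{-5/2}$. Here $\mu=\pi\sqrt{2(k-1)/(3k)}$ for $p_k$ and $\mu=\pi\sqrt{(k-1)/k}$ for $\overline{p}_k$. For these eta quotients your fallback provides exactly this: a circle-method (Rademacher-type) main term whose error is exponentially smaller, since the cusp at $q\to1$ dominates. Expand $G(n+s)-G(n)$ to third order, with remainder $O(n^{-7/2})$, and use $\sum_k s_k=0$ and $D_0=D_1=0$. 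This gives $I(n)=G'''(n)+O(n^{-3})+o(n^{-5/2})\sim\frac{3\mu}{8}n^{-5/2}>0$, which proves \eqref{Briggsineq} for $n\gg1$.
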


A sequence satisfies the {\it Laguerre inequality of order $r$} for $n$ if $L_r(a)(n)\geq0$, where for $r\in\mathbb{N}$,
\begin{align}\label{Lagineq}
\hspace{-0.2 cm}L_r(a)(n)\!&:=\!\frac 12\!\sum_{m=0}^{2r}\!(-1)^{m+r}\!\binom{2r}{m}\!a(n+m)a(n+2r-m).
\end{align}
For $n\!\in\!\mathbb{N}$, the {\it double factorial} is $n!!:=\prod_{k=0}^{\lceil\frac n2\rceil-1}(n-2k)$. We prove the following general result. 
\begin{theorem}\label{newlem1}
 Let $\{a(n)\}_{n\ge 0}$ satisfy {\rm{\ref{GenGORZThm1assump1}}} with $|j|\le 2r$, {\rm{\ref{GenGORZThm1assump2}}}, and {\rm{\ref{genthmassump2}}}. Then we have 
	\[
	L_r(a)(n)\sim (2r-1)!!2^{2r-1} \d^{2r}(n)a^2(n)
	\ \ (\text{as}\ \ n\rightarrow\infty).
	\]
\end{theorem}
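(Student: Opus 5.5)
The plan is to apply \Cref{genthm} with $T=2$ to the quantity $2L_r(a)(n)/a^2(n)$. The factor $\frac12$ and the alternating signs must be absorbed, because \Cref{genthm} requires $\rho_\ell(m)\in\mathbb{N}$. I will therefore put the terms of $2L_r(a)(n)$ with $m+r$ even into $P^{[M_1]}_{\rho_1,s_1}$ and the terms with $m+r$ odd into $P^{[M_2]}_{\rho_2,s_2}$. In both cases $\rho_\ell(m)=\binom{2r}{m}$, $s_\ell(1,m)=m$ and $s_\ell(2,m)=2r-m$. Then
\[
\frac{2L_r(a)(n)}{a^2(n)}=I^{[\bm{M}]}_{\bm{\rho},\bm{s}}(a)(n).
\]
Here $\mathcal{C}=2r$, so the hypothesis \ref{GenGORZThm1assump1} with $|j|\le 2r$ is exactly what \Cref{genthm} needs. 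Since $T\ge2$, we have $u_T=2$ and $\mathcal{M}(n)=-\d^2(n)$.

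The key step is to compute
\[
D_t=\sum_{m=0}^{2r}(-1)^{m+r}\binom{2r}{m}\left(m^2+(2r-m)^2\right)^t
\]
and to verify \ref{assump0} and \ref{assump1} with $N=r$. Substitute $k=m-r$, so that $m^2+(2r-m)^2=2k^2+2r^2$. Then $(2k^2+2r^2)^t$ is a polynomial in $m$ of degree $2t$ with leading coefficient $2^t$. Moreover $(-1)^{m+r}=(-1)^{2r-m}$, so $D_t$ is exactly the $2r$-th forward difference $\Delta^{2r}$, evaluated at $0$, of the polynomial $f(m)=(2(m-r)^2+2r^2)^t$.

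\begin{itemize}
\item For $t<r$, $f$ has degree less than $2r$, so $\Delta^{2r}f=0$ and $D_t=0$.
\item For $t=r$, only the leading term $2^r m^{2r}$ contributes, and $\Delta^{2r}m^{2r}=(2r)!$. Hence $D_r=2^r(2r)!\neq0$.
\end{itemize}

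\Cref{genthm} then gives
\[
\frac{2L_r(a)(n)}{a^2(n)}\sim\frac{2^r(2r)!}{r!}(-1)^r\left(-\d^2(n)\right)^r=\frac{2^r(2r)!}{r!}\,\d^{2r}(n).
\]
Using $(2r)!=2^r r!\,(2r-1)!!$, the right-hand side equals $2^{2r}(2r-1)!!\,\d^{2r}(n)$. Dividing by $2$ yields $L_r(a)(n)\sim(2r-1)!!\,2^{2r-1}\d^{2r}(n)a^2(n)$, as claimed.

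The only real work is this identification of $D_t$ with a finite difference, together with the sign bookkeeping. The splitting into $\rho_1$ and $\rho_2$ must match the sign convention $(-1)^{\ell+1}$ in \eqref{newdef3}, and the sign $(-1)^{m+r}$ must be checked to agree with the forward-difference sign $(-1)^{2r-m}$. Everything else is a direct application of \Cref{genthm}.
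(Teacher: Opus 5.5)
Your approach is the paper's: apply \Cref{genthm} with $T=2$, $\mathcal{C}=2r$, $N=r$, and evaluate $D_t$ as a $2r$-th finite difference, which is what \Cref{useful} does in the paper. However, the sign check you flagged at the end actually fails when $r$ is odd.

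The identity $(-1)^{m+r}=(-1)^{2r-m}$ holds only for even $r$. In general $(-1)^{m+r}=(-1)^r(-1)^{2r-m}$. With your splitting, where the terms with $m+r$ even go into $P^{[M_1]}_{\rho_1,s_1}$, you get
\[
D_t=(-1)^r\sum_{m=0}^{2r}(-1)^{2r-m}\binom{2r}{m}f(m)=(-1)^r\,(\Delta^{2r}f)(0).
\]
So $D_t=0$ for $t<r$, as you say, but $D_r=(-1)^r2^r(2r)!$ rather than $2^r(2r)!$. For $r=1$, for instance, your splitting gives $D_1=2\cdot 2-(4+4)=-4$.

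This value is still nonzero, so \ref{assump0} and \ref{assump1} hold with $N=r$. \Cref{genthm} then gives
\[
\frac{2L_r(a)(n)}{a^2(n)}\sim\frac{(-1)^r2^r(2r)!}{r!}\left(-\delta^2(n)\right)^r=\frac{2^r(2r)!}{r!}\,\delta^{2r}(n).
\]
This is exactly your final display. There, the factor $(-1)^r$ does not follow from your stated $D_r=2^r(2r)!$, which would give a negative asymptotic for odd $r$; it silently cancels the earlier slip. Once $D_r$ is corrected, your argument is complete and the constant $(2r-1)!!\,2^{2r-1}$ follows as you computed.

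The paper avoids this bookkeeping by applying \Cref{genthm} to $2(-1)^rL_r(a)(n)=\sum_{m}(-1)^m\binom{2r}{m}a(n+m)a(n+2r-m)$ and splitting by the parity of $m$. Then $D_r=2^r(2r)!$ holds genuinely, and the $(-1)^r$ is carried on the left-hand side.
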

 The first author \cite[p. 28]{Ban} proposed the following conjecture. 
\begin{conjecture}\label{Banconj}
{\it We have},\footnote{The factor $n^{-\frac{3r}{2}}$ was missing in \cite{Ban}.} {\it as} $n\to \infty$,
\[
L_r(p)(n)\sim \frac{(2r-1)!!\pi^r }{2^{\frac r2+5} 3^{\frac r2+1}}\frac{e^{2\pi\sqrt{\frac{2}3 n}}}{n^{\frac{3r}{2}+2}}.
\]
\end{conjecture} 
Moreover, for $\a>0$, the {\it fractional partitions of $n$}, denoted by $p_{\a}(n)$, are defined via 
\begin{equation*}
\sum_{n\geq0} p_{\a}(n)q^n:=\prod_{n\geq1}\frac{1}{\left(1-q^n\right)^{\a}}.
\end{equation*}
Mukherjee \cite[Conjecture 1.6]{M} proposed the following. 
\begin{conjecture}\label{Mconj1}
{\it We have, as} $n\to \infty$,
\[
L_r\left(p_{24}\right)(n)\sim (2r-1)!!2^{r-2}\pi^r\frac{e^{8\pi\sqrt n}}{n^{\frac{3r+27}2}}.
\]	
\end{conjecture}

\begin{theorem}\label{ex1}
Conjecture \ref{Banconj} and Conjecture \ref{Mconj1} are true.
\end{theorem}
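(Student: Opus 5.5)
We obtain both conjectures from \Cref{newlem1} (equivalently \Cref{Corgenthm}\eqref{Corgenthmeqn1} with $T=2$). The key input is the shape \eqref{genexample} with $\mathcal{S}=\{\frac12\}$, $\l^*=\frac12$. The plan has three steps: verify the hypotheses \ref{GenGORZThm1assump1}--\ref{genthmassump2} for $a=p$ and $a=p_{24}$ with $|j|\le 2r$, compute $\d(n)$, and insert it into the asymptotic formula of \Cref{newlem1}.

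\emph{Hypotheses and $\d(n)$.} Write $a(n)\sim c_1 e^{A n^{1/2}}n^{-c_2}$.
\begin{itemize}
\item By \eqref{pnasymp}, $p$ has $A=2\pi\sqrt{\frac16}=\pi\sqrt{\frac23}$, $c_1=\frac{1}{4\sqrt3}$, $c_2=1$.
\item For $p_{24}$, the Rademacher-type asymptotic (via modularity of $\eta^{-24}$, or Meinardus) gives $p_{24}(n)\sim \frac{1}{\sqrt2}\,e^{4\pi\sqrt n}n^{-27/4}$.
\end{itemize}
Since $j$ is bounded, Taylor expansion gives
\[
\log\frac{a(n+j)}{a(n)} = \frac{A}{2}n^{-1/2}j-\frac{A}{8}n^{-3/2}j^2-c_2\frac{j}{n}+O(n^{-3/2})+o(1).
\]
The cubic term is $O(n^{-5/2})$, and the $c_2 j^2/n^2$ term is negligible. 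Hence \ref{GenGORZThm1assump1} holds with
\[
A(n)=\frac{A}{2}n^{-1/2}-\frac{c_2}{n},\qquad \d^2(n)=\frac{A}{8}n^{-3/2}.
\]
Then \ref{GenGORZThm1assump2} and \ref{genthmassump2} are immediate. This agrees with the general $\d^2=\frac{\l^*(1-\l^*)A_{\l^*}}{2}n^{\l^*-2}$ used in \Cref{Corgenthm}.

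One subtle point: the $o(1)$ error in \ref{GenGORZThm1assump1} must be compatible with the proof of \Cref{newlem1}. There the main term is only of size $\d^{2r}(n)$, so the error must actually be $o(\d^{2r}(n))$. In the proof of \Cref{genthm} this is handled by the cancellation $D_t=0$ for $t<N$. To be safe, we would use the full Hardy--Ramanujan/Rademacher expansions. These give $a(n+j)/a(n)$ as an exponential of a power series in $n^{-1/2}$ with exponentially small error, so every polynomial-order error term is controlled. This bookkeeping of error terms is the main obstacle. Where possible we would handle it by invoking \Cref{Corgenthm}, which already treats sequences of the form \eqref{genexample}.

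\emph{Plugging in.} By \Cref{newlem1},
\[
L_r(a)(n)\sim(2r-1)!!\,2^{2r-1}\Big(\frac{A}{8}\Big)^r n^{-3r/2}a^2(n)
=(2r-1)!!\,2^{-r-1}A^r n^{-3r/2}a^2(n).
\]

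For $p$: $a^2(n)\sim\frac{1}{48}e^{2\pi\sqrt{2n/3}}n^{-2}$ and $A^r=\pi^r2^{r/2}3^{-r/2}$. The constant is
\[
(2r-1)!!\,\pi^r\,2^{-r-1+r/2-4}\,3^{-r/2-1}=\frac{(2r-1)!!\,\pi^r}{2^{\frac r2+5}\,3^{\frac r2+1}},
\]
with exponent $n^{-3r/2-2}$, matching Conjecture \ref{Banconj}.

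For $p_{24}$: $a^2(n)\sim\frac12 e^{8\pi\sqrt n}n^{-27/2}$ and $A^r=4^r\pi^r$. This gives
\[
(2r-1)!!\,2^{2r-r-1-1}\pi^r=(2r-1)!!\,2^{r-2}\pi^r,
\]
with exponent $n^{-(3r+27)/2}$, matching Conjecture \ref{Mconj1}.

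It remains to double-check the constant $c_1$ for $p_{24}$. From the standard formula $p_{k}(n)\sim \frac{k^{(k+1)/4}}{2^{(3k+5)/4}3^{(k+1)/4}}\,n^{-(k+3)/4}e^{\pi\sqrt{2kn/3}}$, at $k=24$ one gets $\frac{24^{25/4}}{2^{77/4}3^{25/4}}=2^{75/4-77/4}=2^{-1/2}$. This confirms the value used above.
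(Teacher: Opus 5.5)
\textbf{Where you agree with the paper.} Your route is the paper's. The paper proves both conjectures by setting $\alpha=1$ and $\alpha=24$ in \Cref{ex2}, which is \Cref{newlem1} (via \Cref{newcor1}) applied to the leading-order asymptotic of $p_\alpha(n)$. Your value $\delta^2(n)\sim\frac{A}{8}n^{-3/2}$, your check $c_1=2^{-1/2}$ for $p_{24}$, and both final constants and exponents are correct.

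\textbf{The gap.} The trouble is the error-term point you raise yourself. It is a genuine gap, the paper's argument through \Cref{ex2} rests on the very same statement, and neither of your ways around it works. The cancellation $D_t=0$ only removes the polynomial main terms. The $o(1)$ errors in (C1) enter each product $\prod_k a(n+s(k,m))/a^T(n)$ separately; extracting one common factor $1+o(1)$, as in the proof of \Cref{genthm}, is not legitimate. So these errors contribute $o(1)$ to a quantity of size $\delta^{2r}(n)\to 0$. Falling back on \Cref{Corgenthm} does not help, since \eqref{genexample} is only a $\sim$ statement and gives nothing beyond (C1) with an $o(1)$ error. Indeed, $a(n)=e^{\sqrt n}\bigl(2+(-1)^n/\log n\bigr)$ is of shape \eqref{genexample} and satisfies (C1)--(C3), yet $L_1(a)(n)=-8(-1)^n\frac{e^{2\sqrt n}}{\log n}(1+o(1))$ changes sign. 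So no argument using only the leading-order asymptotics of $p$ and $p_{24}$ can succeed.

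\textbf{Why your repair fails, and what is needed.} Your suggested repair is to demand that everything beyond $A(n)j-\delta^2(n)j^2$ be $o(\delta^{2r}(n))$. For $r\ge 2$ this cannot be met, however precise the expansion: the cubic term you discarded is $\frac{A}{16}n^{-5/2}j^3$ to leading order, while $\delta^{2r}(n)\asymp n^{-3r/2}$.

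What is needed is a version of \Cref{newlem1} under the hypotheses of \Cref{GORZThm1} with $m=2r$. That is, $\log\frac{a(n+j)}{a(n)}=g_n(j)+o(\delta^{2r}(n))$ for $0\le j\le 2r$, where $g_n(j)=A(n)j-\delta^2(n)j^2+\sum_{k=3}^{2r}A_k(n)j^k$ and $A_k(n)=o(\delta^k(n))$. Write $m=r+\kappa$. The symmetry $m\leftrightarrow 2r-m$ gives $g_n(r+\kappa)+g_n(r-\kappa)=2g_n(r)+\sum_{i\ge1}b_i\kappa^{2i}$, with $b_1=g_n''(r)=-2\delta^2(n)(1+o(1))$ and $b_i=o(\delta^{2i}(n))$ for $i\ge 2$. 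The sum $\sum_{m=0}^{2r}(-1)^m\binom{2r}{m}(m-r)^{2s}$ vanishes for $s<r$ and equals $(2r)!$ for $s=r$. Hence the only term of order $\delta^{2r}(n)$ in $L_r(a)(n)/a^2(n)$ is $e^{2g_n(r)}\frac{(2r)!}{2\,r!}(-b_1)^r\sim(2r-1)!!\,2^{2r-1}\delta^{2r}(n)$.

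These hypotheses do hold for your two sequences. For $p(n)$ use Rademacher's formula. For $p_{24}(n)$ use the analogous exact Bessel-series formula for the coefficients of $1/\Delta(\tau)=\eta(\tau)^{-24}$, whose leading term for $p_{24}(n)$ is $2\pi(n-1)^{-13/2}I_{13}(4\pi\sqrt{n-1})$. Both give $a(n)=F(n)\bigl(1+O(e^{-c\sqrt n})\bigr)$ with $\log F$ smooth and admitting a full expansion. It follows that $A_k(n)\ll n^{1/2-k}=o(\delta^k(n))$ for $k\ge 3$; this is how the hypotheses of \Cref{GORZThm1} were checked for $p(n)$ in the original work. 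Carry out this step explicitly rather than citing \Cref{newlem1} or \Cref{Corgenthm} as stated; your constant computation then completes the proof.
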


Next, we state the conjecture of Yang \cite[Conjecture 6.2]{Yang1} concerning $b_k$ defined in \eqref{genfunc2}.

\begin{conjecture}\label{brokenLagconj1}
{\it For $k\in\{1,2\}$ and $1\le r\le 14$, $\{b_k(n)\}_{n\ge N_{b_k}(r)}$ satisfies}\footnote{The values of $N_{b_k}(r)$  for $k\in\{1,2\}$, $1\le r\le 14$ were stated in \cite[Conjecture 6.2]{Yang1}.} \eqref{Lagineq}. 
\end{conjecture}

We make partial progress towards Conjecture \ref{brokenLagconj1} by proving the following theorem.

\begin{theorem}\label{brokenLagthm1}
 For $k,r\in \mathbb{N}$, we have, as $n\to \infty$, 
	\[
	L_r(b_k)(n)\sim  
	\frac{(2r-1)!!\pi^r (5k+2)^{\frac{r+3}{2}} }{2^{\frac{r+13}{2}} 3^{\frac{r+3}{2}} (2k+1)^{\frac{r+3}{2}} }\frac{e^{2\pi\sqrt{\frac{2(5k+2)}{3(2k+1)}n}}}{n^{\frac{3r+5}{2}}}.
	\]	
\end{theorem}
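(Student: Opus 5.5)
The plan is to apply Theorem \ref{newlem1} to $a=b_k$. For this we need an asymptotic for $b_k(n)$ of the shape \eqref{genexample} with $\mathcal{S}=\{\frac12\}$. Then we read off $\d(n)$ from \ref{GenGORZThm1assump1} and multiply out $(2r-1)!!2^{2r-1}\d^{2r}(n)b_k^2(n)$. The argument therefore has three steps: an asymptotic for $b_k(n)$, the verification of \ref{GenGORZThm1assump1}--\ref{genthmassump2}, and a constant computation.

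\textbf{Step 1: asymptotic for $b_k(n)$.} I would obtain it from \eqref{genfunc2} by a standard saddle-point/circle-method argument (Wright's method or Ingham's Tauberian theorem). Write $q=e^{-z}$. As $z\to0$ we have $(q)_\infty^{-1}\sim\sqrt{z/(2\pi)}\,e^{\pi^2/(6z)}$ and $(-q)_\infty\sim 2^{-1/2}e^{\pi^2/(12z)}$. Similarly $(-q^{2k+1};q^{2k+1})_\infty^{-1}\sim\sqrt2\,e^{-\pi^2/(12(2k+1)z)}$. Hence
\[
B_k(e^{-z})\sim \frac{z}{2\pi}\exp\left(\frac{\pi^2}{z}\left(\frac13+\frac1{12}-\frac1{12(2k+1)}\right)\right)=\frac{z}{2\pi}\exp\left(\frac{\pi^2(5k+2)}{6(2k+1)z}\right).
\]
With $c:=\frac{\pi^2(5k+2)}{6(2k+1)}$, the saddle point method gives
\[
b_k(n)\sim \frac{c^{3/4}}{4\pi^{3/2}}\,n^{-5/4}e^{2\sqrt{cn}},
\]
using the general formula $z^{\beta}e^{c/z}\mapsto \frac{c^{\beta/2+1/4}}{2\sqrt\pi}n^{-\beta/2-3/4}e^{2\sqrt{cn}}$ with $\beta=1$. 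This is of the form \eqref{genexample} with $A_{1/2}=2\sqrt c=2\pi\sqrt{\frac{5k+2}{6(2k+1)}}$ and $c_2=\frac54$, which matches the exponent in the claim. I expect this step to be the main obstacle. One must justify that $B_k$ is dominated by the major arc near $q=1$, i.e.\ give a bound on the minor arcs. This should follow from the standard estimates for eta-quotients, since $B_k$ is (up to $q$-powers) $\eta(2\tau)\eta(2(2k+1)\tau)^{-1}\eta(\tau)^{-3}\eta((2k+1)\tau)$ and the cusp at $\infty$ dominates. If this asymptotic is already available in the literature, I would simply cite it.

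\textbf{Step 2: conditions and $\d(n)$.} The hypotheses \ref{GenGORZThm1assump1}--\ref{genthmassump2} for sequences of type \eqref{genexample} follow by Taylor expanding $\log a_d(n+j)-\log a_d(n)$. This is the same computation implicit in Corollary \ref{Corgenthm}. It gives $A(n)=\l^*A_{\l^*}n^{\l^*-1}+O(n^{-1})\to0$ and
\[
\d^2(n)\sim \frac{\l^*(1-\l^*)A_{\l^*}}{2}\,n^{\l^*-2}.
\]
All error terms are $o(1)$ for fixed $|j|\le 2r$, because the cubic terms are $O(n^{\l^*-3})$. For $\l^*=\frac12$ we get $\d^2(n)\sim \frac{A_{1/2}}{8}n^{-3/2}$.

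\textbf{Step 3: constant.} Theorem \ref{newlem1} then gives
\[
L_r(b_k)(n)\sim(2r-1)!!\,2^{2r-1}\left(\frac{A_{1/2}}{8}\right)^r n^{-3r/2}\,\frac{c^{3/2}}{16\pi^3}n^{-5/2}e^{4\sqrt{cn}}.
\]
Substituting $A_{1/2}=2\sqrt c$ and $c=\frac{\pi^2(5k+2)}{6(2k+1)}$ and simplifying the powers of $2$, $3$, $\pi$, $5k+2$ and $2k+1$ should reproduce the stated constant. As a check, the exponent $4\sqrt{cn}=2\pi\sqrt{\frac{2(5k+2)}{3(2k+1)}n}$ and the power $n^{-\frac{3r+5}{2}}$ both agree with the claim. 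The remaining work is routine bookkeeping of constants.
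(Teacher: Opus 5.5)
\textbf{Verdict: same route as the paper, and your constants are right, but the argument has a genuine gap in Step~2, which the paper's proof shares.}

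Your outline matches the paper's proof: insert \Cref{brokenlem1} into \Cref{newcor1}, i.e.\ apply \Cref{newlem1} with $\delta^2(n)\sim\frac{\sqrt c}{4}n^{-3/2}$. Your Step~1 formula agrees with \Cref{brokenlem1}, since $\frac{c^{3/4}}{4\pi^{3/2}}=\frac{(5k+2)^{3/4}}{2^{11/4}3^{3/4}(2k+1)^{3/4}}$. Your Step~3 gives $(2r-1)!!\,\frac{c^{(r+3)/2}}{32\pi^3}\,n^{-\frac{3r+5}{2}}e^{4\sqrt{cn}}$, which is exactly the stated formula.

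\textbf{The gap.} In Step~2 you accept remainders that are merely $o(1)$. Each product $b_k(n+m)b_k(n+2r-m)$ in $L_r(b_k)(n)$ has size $\asymp b_k^2(n)$, but the products must cancel down to relative size $\delta^{2r}(n)\asymp n^{-3r/2}$. A relation $b_k(n)\sim(\text{main term})$ with an unquantified factor $1+o(1)$, which is all a Tauberian argument or a leading-order saddle point gives, says nothing at that scale. For example, $a(n):=b_k(n)(1+(-1)^nn^{-1/4})$ has the same leading asymptotic as $b_k(n)$, yet $L_1(a)(n)=4(-1)^{n+1}n^{-1/4}a^2(n)(1+o(1))$ changes sign.

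Structurally, since $A(n),\delta(n)\to0$, the quantity $A(n)j-\delta^2(n)j^2$ is itself $o(1)$. So \ref{GenGORZThm1assump1} with an $o(1)$ remainder only says $a(n+j)/a(n)\to1$. The relation holds for $(A,\delta)$ iff it holds for $(A,2\delta)$, while the conclusion of \Cref{newlem1} changes by the factor $4^r$. Hence \Cref{newlem1}, with its hypotheses as stated, cannot prove the theorem.

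\textbf{How to close it.} You need $b_k(n)$ to relative accuracy $o(n^{-3r/2})$. $B_k$ is, up to a power of $q$, the eta-quotient you wrote down, and the singularity at $q=1$ strictly dominates those at the other roots of unity. So Wright's circle method (your first option in Step~1) gives, for every $J$,
$b_k(n)=\frac{c^{3/4}}{4\pi^{3/2}}n^{-5/4}e^{2\sqrt{cn}}\bigl(1+\sum_{i=1}^{J}e_in^{-i/2}+O(n^{-(J+1)/2})\bigr)$.
Take $J\ge 3r$ and Taylor expand the logarithm of the truncated main term. For $|j|\le 2r$ this gives
\[
\log\frac{b_k(n+j)}{b_k(n)}=A(n)j-\delta^2(n)j^2+\sum_{\ell=3}^{2r}A_\ell(n)j^\ell+o\bigl(\delta^{2r}(n)\bigr),
\]
with $\delta^2(n)=\frac{\sqrt c}{4}n^{-3/2}(1+O(n^{-1/2}))$ and $A_\ell(n)=O(n^{\frac12-\ell})=o(\delta^\ell(n))$. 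This is the hypothesis shape of \Cref{GORZThm1}, not \ref{GenGORZThm1assump1}.

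Now write $m=r+u$. The exponent for $b_k(n+m)b_k(n+2r-m)/b_k^2(n)$ is even in $u$:
$\gamma_0+\gamma_1u^2+\gamma_2u^4+\dots+o(\delta^{2r}(n))$, with $\gamma_0\to0$, $\gamma_1\sim-2\delta^2(n)$, and $\gamma_i=o(\delta^{2i}(n))$ for $i\ge2$.
\Cref{useful} annihilates every $u^{2s}$ with $s<r$, and the terms with $s>r$ are $O(\delta^{2r+2}(n))$. The $u^{2r}$ term contributes $\frac12\cdot\frac{\gamma_1^r}{r!}(-1)^r(2r)!\sim(2r-1)!!\,2^{2r-1}\delta^{2r}(n)$. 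With this replacing Step~2, your Step~3 goes through unchanged.
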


We next define the {\it difference operator} on a sequence $\{a(n)\}_{n \geq 0}$ by $$\Delta^{[1]}(a)(n):=a(n)-a(n-1).$$ The {\it $r$-fold difference operator} is given, for $r\geq2$, by
\begin{equation*}
\Delta^{[r]}(a)(n):=\Delta^{[1]}\left(\Delta^{[r-1]}(a)\right)(n).
\end{equation*} 
Gomez, Males, and the third author \cite{gomez2022second} generalized $\Delta$ by introducing a shifted operator 
\begin{equation}\label{deltadef}
\Delta^{[1]}_{h}(a)(n):=a(n)-a(n-h)\ \ \text{and}\ \ \Delta^{[r]}_{h}(a)(n):=\Delta^{[1]}_{h}\left(\Delta^{[r-1]}_{h}(a)\right)(n)\ \ (h\in \mathbb{N}).
\end{equation}
 We prove asymptotics for such difference operators applied to $\{a(n)\}_{n\ge 0}$.
\begin{theorem}\label{newsec4lem1}	
	Let $\{a(n)\}_{n\ge 0}$ satisfy {\rm{\ref{GenGORZThm1assump1}}} with $|j|\le h r$, {\rm{\ref{GenGORZThm1assump2}}}, and {\rm{\ref{genthmassump2}}}. For $r,h\in \mathbb{N}$, 
	\[
	\Delta^{[r]}_h(a)(n)\sim h^rA^r(n) a(n)\ \ \text{ as}\ \ n\rightarrow\infty.
	\]
\end{theorem}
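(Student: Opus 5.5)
Expanding the $r$-fold shifted difference as a binomial sum gives
\[
\Delta^{[r]}_h(a)(n)=\sum_{k=0}^{r}(-1)^k\binom{r}{k}a(n-hk).
\]
This places the statement inside the framework of \Cref{genthm} with $T=1$. Split the terms by the sign of $(-1)^k$. Put the terms with $k$ even into $P^{[M_1]}_{\rho_1,s_1}$, with $\rho_1=\binom{r}{k}$ and $s_1(1,\cdot)=-hk$. Put the terms with $k$ odd into $P^{[M_2]}_{\rho_2,s_2}$ in the same way. Then
\[
I^{[\bm{M}]}_{\bm{\rho},\bm{s}}(a)(n)=\frac{\Delta^{[r]}_h(a)(n)}{a(n)},
\]
and $\mathcal{C}=hr$. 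This matches the hypothesis \ref{GenGORZThm1assump1} with $|j|\le hr$. Since $T=1$, we have $u_T=u_1=1$ and $\mathcal{M}(n)=A(n)$, so \Cref{genthm} will give $\Delta^{[r]}_h(a)(n)/a(n)\sim \frac{D_N}{N!}A^N(n)$ once \ref{assump0} and \ref{assump1} are verified.

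The main step is to compute the moments
\[
D_t=\sum_{k=0}^{r}(-1)^k\binom{r}{k}(-hk)^t=(-h)^t\sum_{k=0}^r(-1)^k\binom{r}{k}k^t.
\]
The inner sum is the standard finite-difference identity. It equals $(-1)^r r!\,S(t,r)$, where $S(t,r)$ is a Stirling number of the second kind, because it is the $r$-th forward difference of $x^t$ at $0$ up to sign. Hence $D_t=0$ for $0\le t\le r-1$, since these are polynomials of degree less than $r$. For $t=r$ we get $D_r=(-h)^r(-1)^r r!=h^r r!\neq0$. So \ref{assump0} and \ref{assump1} hold with $N=r$.

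Substituting, $\frac{D_N}{N!}\mathcal{M}^N(n)=h^rA^r(n)$, and multiplying through by $a(n)$ gives the claim.

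The only delicate point is that the asymptotic in \Cref{genthm} must be nondegenerate: $A(n)$ must be nonzero for large $n$, so that $A^r(n)$ is a genuine main term. Here I rely on \Cref{genthm} as stated, since its hypotheses \ref{GenGORZThm1assump1}, \ref{GenGORZThm1assump2} and \ref{genthmassump2} are exactly those assumed in this statement. I would also check that the sign and shift conventions in \eqref{newdef3} for $T=1$ match the negative shifts $-hk$. They do, because only the values $s^{1}$ enter, raised to the power $t$.
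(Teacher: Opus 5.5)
Your reduction is the paper's own proof almost verbatim: the same parity split with shifts $-hk$, $\mathcal C=hr$, $u_1=1$, $\mathcal M=A$, $D_t=0$ for $t<r$ and $D_r=h^r r!$, then \Cref{genthm} with $T=1$, $N=r$. As a deduction from \Cref{genthm} it is fine. But the point you flagged and then deferred to \Cref{genthm} is a genuine gap, shared by the paper, and it goes well beyond the possibility $A(n)=0$. Under {\rm\ref{GenGORZThm1assump1}}--{\rm\ref{genthmassump2}} as written, neither \Cref{genthm} (for $T=1$) nor the statement holds.

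The error in {\rm\ref{GenGORZThm1assump1}} is only $o(1)$, while {\rm\ref{GenGORZThm1assump2}} and {\rm\ref{genthmassump2}} make $A(n)j$ and $\d^2(n)j^2$ themselves $o(1)$. So the hypotheses say no more than $a(n+j)/a(n)\to1$, and they do not pin down $A$. For $a(n)=e^{\sqrt n}$, the choice $A(n)=1/\sqrt n$ is as admissible as $1/(2\sqrt n)$, yet $\Delta^{[1]}(a)(n)\sim a(n)/(2\sqrt n)$.

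Even natural data with a strong error term fail. The sequence $a(n)=\sqrt n$ satisfies {\rm\ref{GenGORZThm1assump1}} with $A(n)=\d(n)=\frac1{2n}$ and error $O(n^{-3})$. Yet $\Delta^{[2]}(a)(n)\sim-\frac14n^{-3/2}$, whereas the statement predicts $+\frac14n^{-3/2}$. The paper's own \Cref{logdifflem1} has the correct factor $c(c-1)$, not $c^2$, for $n^c$.

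The proof of \Cref{genthm} breaks at the $T=1$ step. There the $j$-dependent errors and the factors $e^{-\d^2(n)s^2}$ are pulled out of an alternating sum as a common factor $1+o(1)$, although after cancellation that sum is only of size $A^r(n)\to0$. Even with no error term, set $x_k:=-A(n)hk-\d^2(n)h^2k^2$. Then $\sum_{k=0}^{2}(-1)^k\smallbinom{2}{k}e^{x_k}=h^2\left(A^2(n)-2\d^2(n)\right)+O\left(|A(n)|^3+|A(n)|\d^2(n)+\d^4(n)\right)$. So the quadratic term enters at the order of the main term unless $\d=o(A)$.

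To get a true statement you need stronger hypotheses, e.g.\ error $o(A^r(n))$ in {\rm\ref{GenGORZThm1assump1}} for $|j|\le hr$, $\d(n)=o(A(n))$, and $A(n)\neq0$ for $n\gg1$. The condition $\d=o(A)$ does hold for the sequences \eqref{genexample}, where $\d^2(n)/A^2(n)\asymp n^{-\l^*}$, but the error condition needs more than the bare asymptotic \eqref{genexample}. Under these hypotheses the argument runs as follows.
\begin{enumerate}
\item Write $a(n-hk)/a(n)=e^{x_k}+o(A^r(n))$.
\item In the expansion of $e^{x_k}$, the monomial $A^a\d^{2b}$ comes from $x_k^{a+b}$ and carries $k^{a+2b}$. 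It is therefore annihilated by $\sum_{k=0}^{r}(-1)^k\smallbinom{r}{k}(\cdot)$ when $a+2b<r$.
\item For $t:=a+b\le r$, every surviving monomial other than $A^r$ is $o(A^r(n))$ because $\d=o(A)$.
\item The tail $t>r$ of the exponential series is $O((|A(n)|+\d^2(n))^{r+1})=o(A^r(n))$.
\item The coefficient of $A^r$ is exactly your $D_r/r!=h^r$.
\end{enumerate}
So your moment computation is the right core, but it must be fed into this direct expansion under sharpened hypotheses rather than into \Cref{genthm} as stated.
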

 A {\it plane
	partition} of size $n$ is a two-dimensional array of $\pi_{j,k}\in \mathbb{N}_0$ satisfying $\sum_{j,k}\pi_{j,k}=n$, $\pi_{j,k}\ge \pi_{j,k+1}$, and $\pi_{j,k}\ge \pi_{j+1,k}$ for $j,k\in \mathbb{N}$. We denote the number of plane partitions of $n$ by $\textup{pp}(n)$. Mukherjee \cite[Problem 4.2]{Mshift} asked for asymptotics of $\Delta^{[r]}_{\ell}\left(\textup{pp}\right)(n)$. Here we find these. 

\begin{theorem}\label{mshiftthm}
As $n\to \infty$, we have 
	\[
	\Delta^{[r]}_{h}\left(\textup{pp}\right)(n)\sim C_1\left(\frac{2h A}{3}\right)^r\frac{ e^{A n^{\frac 23}} }{n^{\frac r3+\frac{25}{36}}},
	\]
with $C_1:=\frac{\z(3)^{\frac{7}{36}}e^{\z'(-1)}}{2^{\frac{11}{36}}\sqrt{3\pi}}$, $A:=\frac{3\z(3)^{\frac 13}}{2^{\frac 23}}$, where $\z$ is the Riemann zeta function.
\end{theorem}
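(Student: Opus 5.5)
The plan is to deduce the statement from Theorem~\ref{newsec4lem1}, so the work lies in checking its hypotheses for $a=\textup{pp}$ and then computing $A(n)$. The input is Wright's asymptotic
\[
\textup{pp}(n)\sim C_1\frac{e^{An^{\frac 23}}}{n^{\frac{25}{36}}},
\]
with $C_1$ and $A$ as in the statement. This has the shape \eqref{genexample} with $\mathcal{S}=\{\frac23\}$, $A_{\frac23}=A>0$, $c_1=C_1$, $c_2=\frac{25}{36}$, so \ref{genexampleassump1}--\ref{genexampleassump4} hold. However, \ref{GenGORZThm1assump1} needs the $o(1)$ error in the logarithm of the ratio, so I want the sharper form $\textup{pp}(n)=C_1 n^{-\frac{25}{36}}e^{An^{2/3}}(1+o(1))$ uniformly for shifts $n+j$ with $|j|\le hr$. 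This holds because $\textup{pp}(n+j)\sim C_1 (n+j)^{-25/36}e^{A(n+j)^{2/3}}$ for each fixed $j$.

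Next I expand, for fixed $j$,
\[
\log\frac{\textup{pp}(n+j)}{\textup{pp}(n)}=A\bigl((n+j)^{\frac23}-n^{\frac23}\bigr)-\frac{25}{36}\log\Bigl(1+\frac jn\Bigr)+o(1).
\]
Taylor expansion gives $A(n+j)^{2/3}-An^{2/3}=\frac{2A}{3}n^{-1/3}j-\frac{A}{9}n^{-4/3}j^2+O(n^{-7/3})$, and the logarithmic term is $O(1/n)$. So \ref{GenGORZThm1assump1} holds with $A(n):=\frac{2A}{3}n^{-\frac13}$ and $\d^2(n):=\frac A9 n^{-\frac43}$; the latter term is itself $o(1)$, but keeping it makes \ref{GenGORZThm1assump2} hold with $\d(n)>0$ and $\d(n)\to0$. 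Also $A(n)\to0$, which is \ref{genthmassump2}. The general bookkeeping is presumably the same as in the proof of Corollary~\ref{Corgenthm}, and I would just cite it. The only subtlety is that \ref{GenGORZThm1assump1} tolerates $o(1)$ errors, so the choice of $\d$ is harmless.

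Theorem~\ref{newsec4lem1} then gives
\[
\Delta_h^{[r]}(\textup{pp})(n)\sim h^r\left(\frac{2A}{3}\right)^r n^{-\frac r3}\,\textup{pp}(n),
\]
and substituting Wright's asymptotic yields the claim. One point needs care: Theorem~\ref{newsec4lem1} requires the $o(1)$ in \ref{GenGORZThm1assump1} to be small compared with $A^r(n)$-type quantities. The underlying proof (via Theorem~\ref{genthm} with $T=1$) has this issue too, and I take that theorem as given with its stated hypotheses.

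I expect the main obstacle to be this relative-error issue, i.e.\ justifying that Wright's asymptotic is precise enough. The reason is that $\Delta^{[r]}_h$ involves cancellation of order $n^{-r/3}$, so a multiplicative error $1+o(1)$ in $\textup{pp}(n)$ does not obviously suffice. If Theorem~\ref{newsec4lem1} is read literally, it suffices. If a check is needed, I would use that Wright's expansion has error $1+O(n^{-2/3})$, or more precisely a full asymptotic series in powers of $n^{-2/3}$ whose coefficients vary smoothly in $n$. Then the differences of the correction factor contribute only lower order, since each application of $\Delta_h$ to a smooth factor $1+c\,n^{-2/3}+\cdots$ gains a factor $n^{-1}$, which is much smaller than $n^{-1/3}$.
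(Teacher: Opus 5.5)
Your main line is the paper's own argument: it quotes Wright's leading asymptotic, notes that $\textup{pp}$ has the shape \eqref{genexample} with $\lambda^*=\frac23$, and applies \Cref{newsec4lem3}, that is, \Cref{fact2}, \Cref{newsec4lem1} and $A(n)\sim\frac{2A}{3}n^{-1/3}$ from \eqref{obs4}. Your $A(n)$ and $\delta(n)$ agree with this.

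\textbf{The caveat is a real gap.} The caveat in your last paragraph is a genuine gap in this route, and it affects the paper's proof too. You cannot dispose of it by reading \Cref{newsec4lem1} literally. The only input is $\textup{pp}(n)\sim C_1n^{-25/36}e^{An^{2/3}}$. The sequence $a(n):=\textup{pp}(n)(1+(-1)^n/\log n)$ (for $n\ge 3$, say) has the same asymptotic, so by \Cref{Fact1} it satisfies every hypothesis you verify. Yet $\Delta^{[1]}_1(a)(n)-\Delta^{[1]}_1(\textup{pp})(n)\sim 2(-1)^n\textup{pp}(n)/\log n$, which is not $o(n^{-1/3}\textup{pp}(n))$. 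So the predicted asymptotic fails for at least one of the two sequences. The underlying reason is the one you noticed: \ref{GenGORZThm1assump1} with an $o(1)$ error still holds when $A(n)$ is replaced by $2A(n)$. Hence it cannot control quantities of relative size $A^r(n)$, and \Cref{newsec4lem1} cannot hold as stated.

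\textbf{The fix is your fallback, made precise.} A bare factor $1+O(n^{-2/3})$ contributes $O(n^{-2/3})\textup{pp}(n)$ to $\Delta^{[r]}_h(\textup{pp})(n)$. This is negligible against the main term, of order $n^{-r/3}\textup{pp}(n)$, only when $r=1$. For general $r$, use Wright's complete expansion
$\textup{pp}(n)=C_1n^{-25/36}e^{An^{2/3}}\bigl(\sum_{0\le j<J}c_jn^{-2j/3}+O(n^{-2J/3})\bigr)$, with $c_0=1$ and $2J>r$.
\begin{itemize}
\item The remainder contributes $O(n^{-2J/3})\textup{pp}(n)=o(n^{-r/3}\textup{pp}(n))$ to the $r$-fold difference.
\item For each smooth term $f_j(x):=x^{-25/36-2j/3}e^{Ax^{2/3}}$ one has $f_j^{(r)}(x)=(\frac{2A}{3})^rx^{-r/3}f_j(x)(1+O(x^{-2/3}))$.
\item Writing $\Delta^{[r]}_hf_j(n)$ as the integral of $f_j^{(r)}(n-t_1-\cdots-t_r)$ over $[0,h]^r$ then gives $\Delta^{[r]}_hf_j(n)=(\frac{2hA}{3})^rn^{-r/3}f_j(n)(1+O(n^{-1/3}))$.
\end{itemize}
Hence only $j=0$ contributes to the leading term, and the statement follows. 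With this step made explicit, your proof is complete and actually closes the gap left by the paper's one-line argument.
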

 Yang \cite[Conjecture 5.2 (1)]{Yang} proposed the following:
 \begin{conjecture}\label{Yangconj}
{\it For $1\le r\le 5$, $\{\Delta^{[r]}(p)(n)\}_{n\ge T_p(r)}$ and $\{\Delta^{[r]}(\overline{p})(n)\}_{n\ge T_{\overline{p}}(r)}$ satisfy}\footnote{The values of $T_p(r)$ and  $T_{\overline{p}}(r)$ were given in \cite[Table 5]{Yang}.} \eqref{HigherOrderTuranDef}.
\end{conjecture}

We make partial progress towards  Conjecture \ref{Yangconj}.

\begin{theorem}\label{Jensencor1}
	Let $r\in \mathbb{N}$. Then $\Delta^{[r]}(p)(n)$ and $\Delta^{[r]}(\overline{p})(n)$ satisfy \eqref{HigherOrderTuranDef} for $n\!\gg\!1$.	
\end{theorem}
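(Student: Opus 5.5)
The plan is to apply \Cref{GenGORZcor1} with $m=3$ to the sequences $a(n):=\Delta^{[r]}(p)(n)$ and $a(n):=\Delta^{[r]}(\overline{p})(n)$. By the remark after \eqref{Jensendef}, hyperbolicity of $J^{3,n-1}_a$ is equivalent to \eqref{HigherOrderTuranDef} at $n$. Hence it suffices to verify \ref{GenGORZThm1assump1} for $0\le j\le 3$ together with \ref{GenGORZThm1assump2} for these sequences. Positivity of $a(n)$ for $n\gg1$ will follow from the asymptotic in the first step below.

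\textbf{Step 1: asymptotic for $a(n)$.} Both $p$ and $\overline{p}$ have the shape \eqref{genexample}. For $p$ we have $\mathcal{S}=\{\frac12\}$, $A_{1/2}=\pi\sqrt{2/3}$, $c_2=1$. For $\overline{p}$ we have $\overline{p}(n)\sim \frac{e^{\pi\sqrt n}}{8n}$. Therefore \ref{GenGORZThm1assump1}--\ref{genthmassump2} hold, with $A(n)\sim \frac{A_{1/2}}{2\sqrt n}$ and $\d^2(n)\sim \frac{A_{1/2}}{8n^{3/2}}$. \Cref{newsec4lem1} with $h=1$ then gives $\Delta^{[r]}(a)(n)\sim A^r(n)a(n)$, i.e., $\Delta^{[r]}(p)(n)\sim c\, e^{A_{1/2}\sqrt n}n^{-1-r/2}$, which is again of the form \eqref{genexample} with the same $\mathcal{S}$ and $A_{1/2}$.

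\textbf{Step 2: the main obstacle.} An asymptotic equivalence $a(n)\sim f(n)$ only gives $\log\frac{a(n+j)}{a(n)}=\log\frac{f(n+j)}{f(n)}+o(1)$. Condition \ref{GenGORZThm1assump1} allows exactly an $o(1)$ error, but the $j^2\d^2(n)$ term is itself of size $n^{-3/2}$. So for hyperbolicity (as in \cite{GORZ}) one actually needs an error $o(\d^2(n))$, or at least an expansion precise enough that after the renormalization $x\mapsto \d(n)x-1$ the polynomials tend to Hermite polynomials. A crude $\sim$ is therefore insufficient. To handle this, I would use the Rademacher-type exact formulas for $p$ and $\overline{p}$, namely a Bessel-function main term $\mathcal{B}(n)$ with an exponentially smaller error $O(e^{(A_{1/2}/2+\varepsilon)\sqrt n})$. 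The operator $\Delta^{[r]}$ applied to the error term remains exponentially negligible. The operator applied to the main term can be written as the $r$-th finite difference of a smooth function $g(x)=\mathcal{B}(x)$, that is, $\Delta^{[r]}g(n)=\int_{[0,1]^r} g^{(r)}(n-t_1-\cdots-t_r)\,dt$. Since $g^{(r)}(x)=g(x)\,\frac{A_{1/2}^r}{(2\sqrt x)^r}\,(1+\sum_k c_k x^{-k/2})$ has a full asymptotic expansion in powers of $x^{-1/2}$, so does $\Delta^{[r]}g(n)/g(n)$. Consequently $\log a(n)=A_{1/2}\sqrt n-c_2'\log n+\sum_{k\ge1}e_kn^{-k/2}+o(n^{-3/2}\cdot n^{-\epsilon})$ for some $\epsilon>0$.

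\textbf{Step 3: Taylor expansion and conclusion.} Taylor expanding $\log a(n+j)-\log a(n)$ in $j$ up to order $3$ gives
\[
\log\frac{a(n+j)}{a(n)}=A(n)j-\d^2(n)j^2+O\!\left(n^{-5/2}\right),
\]
where $A(n)$ is the first derivative and $\d^2(n)=-\frac12(\log a)''(n)\sim \frac{A_{1/2}}{8}n^{-3/2}>0$. This verifies \ref{GenGORZThm1assump1} and \ref{GenGORZThm1assump2} in the strengthened form used in the GORZ Hermite-limit argument, namely cubic error $o(\d^3(n))$ up to the scaling, since $n^{-5/2}=o(n^{-9/4})$. \Cref{GenGORZcor1} with $m=3$ then yields hyperbolicity of $J^{3,n}_a$ for $n\gg1$, which is the claim. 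The delicate point is Step 2: making the differentiation and finite-difference expansion uniform, with the error term from the exact formula controlled after $r$ differences.
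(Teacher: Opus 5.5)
Your proposal is correct, and it takes a genuinely different route from the paper's. The paper's proof is one line. \Cref{shiftediffthm} gives $\Delta^{[r]}(p)(n)\sim C\,e^{\pi\sqrt{2n/3}}n^{-\frac r2-1}$ (and similarly for $\overline{p}$), which is of the shape \eqref{genexample}. \Cref{sec:Jensenlem1} then asserts hyperbolicity for every such sequence, via \Cref{Fact1}, \Cref{fact2} and \Cref{GenGORZcor1}, that is, using only an $o(1)$ error in \ref{GenGORZThm1assump1}.

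Your Step 2 objection applies directly to that argument. In \eqref{Jenseneqn1} the termwise $o(1)$ errors are pulled out as a common factor $1+o(1)$, even though the main terms cancel down to size $\d^m(n)$. After dividing by $\d^m(n)$, the errors are therefore only $o(1)/\d^m(n)$, which need not tend to $0$. Concretely, $a(n):=p(n)\left(1+\frac{(-1)^n}{n}\right)$ has the shape \eqref{genexample}. Yet for large even $n$,
\[
\log\frac{a(n+1)^2}{a(n)a(n+2)}=-\frac4n+O\bigl(n^{-\frac32}\bigr)<0,
\]
so $J^{2,n}_a$ fails to be hyperbolic infinitely often. The $\sim$-asymptotic of $\Delta^{[r]}(p)$ alone therefore cannot give the theorem, and finer input such as your exact-formula argument is needed.

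The paper's route buys brevity and uniformity over all sequences of shape \eqref{genexample}, but it is not valid as written. Yours actually verifies the hypothesis of \Cref{GORZThm1}. The price is that it relies on the Rademacher and Zuckerman formulas (a smooth main term plus an exponentially smaller error), which are specific to $p$ and $\overline{p}$.

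Three small repairs are needed.

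First, your Step 1 invokes \Cref{newsec4lem1}, which has the same defect: the separate $o(1)$ errors in the ratios $a(n-k)/a(n)$ need not be $o(A^r(n))$. Simply drop Step 1. The integral representation in Step 2 already gives $\Delta^{[r]}g(n)=g(n)\frac{A_{1/2}^r}{(2\sqrt n)^r}\bigl(1+O(n^{-\frac12})\bigr)$, and hence both the asymptotic and eventual positivity.

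Second, to get an error $o(\d^3(n))=o(n^{-\frac94})$ in Step 3, the truncated expansion of $\log a(n)$ must have error $O(n^{-\frac52})$, so keep the terms through $n^{-2}$. An error of merely $o(n^{-\frac32-\epsilon})$ for some small $\epsilon>0$ is not enough. Your full expansion does supply this.

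Third, conclude with \Cref{GORZThm1} and \Cref{GORZcor1} for $m=3$, not with \Cref{GenGORZcor1}. The $o(1)$ hypothesis of \Cref{GenGORZcor1} is exactly what your own Step 2 shows to be insufficient.
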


Yang \cite[Conjecture 5.1 (1)]{Yang} also made the following conjecture.
\begin{conjecture}\label{Yangconj1}
{\it For $1\le m\le 11$ and $1\le r\le 5$, $\{\Delta^{[r]}(p)(n)\}_{n\ge L_p(r,m)}$ and}\ \ $\{\Delta^{[r]}(\overline{p})(n)\}_{n\ge L_{\overline{p}}(r,m)}$ {\it satisfy}\footnote{The values for $(L_{p}(r,m))_{\substack{1\le m\le 11\\1\le r\le 5}}$ and $(L_{\overline{p}}(r,m))_{\substack{1\le m\le 11\\1\le r\le 5}}$ were stated in \cite[Table 1 and 2]{Yang}.} \eqref{Lagineq}.
\end{conjecture}

 We make partial progress towards  Conjecture \ref{Yangconj1}.

\begin{theorem}\label{Lagthm1}
	For $m, r\in \mathbb{N}$, we have, as $n\to \infty$,
	\begin{align*}
	L_m\left(\Delta^{[r]}(p)\right)(n)&\sim 
	\frac{(2m-1)!!\pi^{2r+m}}{2^{r+\frac m2+5}3^{r+\frac m2+1} }\frac{e^{2\pi\sqrt{\frac{2}{3}n}}}{n^{r+\frac{3m}{2}+2}},\\
	L_m\left(\Delta^{[r]}(\overline{p})\right)(n)&\sim 
	\frac{(2m-1)!!\pi^{2r+m}}{2^{2r+m+7} }\frac{e^{2\pi\sqrt{n}}}{n^{r+\frac{3m}{2}+2}}.
	\end{align*}
\end{theorem}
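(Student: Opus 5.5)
The plan is to reduce \Cref{Lagthm1} to \Cref{newlem1} and \Cref{newsec4lem1}. The key observation is that $b(n):=\Delta^{[r]}(p)(n)$ (resp.\ $\Delta^{[r]}(\overline{p})(n)$) satisfies \ref{GenGORZThm1assump1}--\ref{genthmassump2} with the \emph{same} sequences $A(n)$ and $\d(n)$ as the underlying sequence $p$ (resp.\ $\overline{p}$).

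\textbf{Step 1: expansions for $p$ and $\overline{p}$.} Write $p(n)\sim \frac{1}{4\sqrt3}\frac{e^{c\sqrt n}}{n}$ with $c:=\pi\sqrt{\frac23}$, and $\overline{p}(n)\sim\frac{e^{\pi\sqrt n}}{8n}$, so that $c=\pi$ in the second case. Taylor expanding $c\sqrt{n+j}$ and $\log(n+j)$ for bounded $j$ gives
\[
\log\left(\frac{p(n+j)}{p(n)}\right)=A(n)j-\d^2(n)j^2+o(1),\qquad A(n)=\frac{c}{2\sqrt n}-\frac1n,\quad \d^2(n)=\frac{c}{8n^{\frac32}}.
\]
Hence \ref{GenGORZThm1assump1}--\ref{genthmassump2} hold, and likewise for $\overline{p}$. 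Equivalently, this is the case $\mathcal S=\{\frac12\}$ of \eqref{genexample}.

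\textbf{Step 2: transfer to $b$.} By \Cref{newsec4lem1} with $h=1$, we have $b(n)=A^r(n)p(n)(1+\varepsilon(n))$ with $\varepsilon(n)\to0$; in particular $b(n)>0$ for $n\gg1$. Therefore, for $|j|\le 2m$,
\[
\log\left(\frac{b(n+j)}{b(n)}\right)=\log\left(\frac{p(n+j)}{p(n)}\right)+r\log\left(\frac{A(n+j)}{A(n)}\right)+\log\left(\frac{1+\varepsilon(n+j)}{1+\varepsilon(n)}\right).
\]
The middle term is $O(\frac1n)=o(1)$ and the last term is $o(1)$. So $b$ satisfies \ref{GenGORZThm1assump1} with the same $A$ and $\d$, and \ref{GenGORZThm1assump2}, \ref{genthmassump2} are inherited. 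This requires applying \Cref{newsec4lem1} with $|j|\le r+2m$ so that all shifted values $b(n+j)$ are covered. Up to reindexing, this is also the only point where the backward difference convention matters.

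\textbf{Step 3: apply \Cref{newlem1}.} This gives
\[
L_m(b)(n)\sim(2m-1)!!\,2^{2m-1}\d^{2m}(n)b^2(n)\sim(2m-1)!!\,2^{2m-1}\left(\frac{c}{8n^{\frac32}}\right)^m\left(\frac{c}{2\sqrt n}\right)^{2r}p^2(n).
\]
It remains to insert $p^2(n)\sim\frac{e^{2c\sqrt n}}{48n^2}$ and $c^{2r+m}=\pi^{2r+m}2^{r+\frac m2}3^{-r-\frac m2}$, and collect powers:
\begin{itemize}
\item the powers of $2$ combine to $2^{2m-1-3m-2r+r+\frac m2-4}=2^{-r-\frac m2-5}$;
\item the powers of $3$ combine to $3^{-r-\frac m2-1}$;
\item the powers of $n$ combine to $n^{-r-\frac{3m}2-2}$.
\end{itemize}
This is the first formula. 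For $\overline{p}$, insert $c=\pi$ and $\overline{p}^2(n)\sim\frac{e^{2\pi\sqrt n}}{64n^2}$. The powers of $2$ then combine to $2^{2m-1-3m-2r-6}=2^{-2r-m-7}$, which is the second formula.

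\textbf{Main obstacle.} The delicate point is Step 2: making sure the $o(1)$-error form of \ref{GenGORZThm1assump1} really survives passing from $p$ to $\Delta^{[r]}(p)$. This relies on \Cref{newsec4lem1} giving a genuine multiplicative asymptotic $(1+o(1))$ uniformly over the finitely many shifts involved, which it does since only finitely many $j$ occur. Everything else is bookkeeping of constants.
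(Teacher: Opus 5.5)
Your proposal is correct and is essentially the paper's proof. The paper combines Corollary~\ref{shiftediffthm} (your Step~2 asymptotic $\Delta^{[r]}(p)(n)\sim A^r(n)p(n)$, which puts $\Delta^{[r]}(p)$ and $\Delta^{[r]}(\overline{p})$ in the shape \eqref{genexample}, so Lemma~\ref{fact2} supplies \ref{GenGORZThm1assump1}--\ref{genthmassump2}) with Corollary~\ref{newcor1}, i.e.\ Theorem~\ref{newlem1} specialized to that shape; you instead transfer \ref{GenGORZThm1assump1}--\ref{genthmassump2} directly from $p$ with the same $A(n),\d(n)$, and your bookkeeping of the powers of $2$, $3$, $\pi$ and $n$ reproduces both stated constants.

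One caveat, which you inherit from the paper rather than introduce: $L_m(b)(n)/b^2(n)$ has size $\d^{2m}(n)\to 0$, so an $o(1)$ remainder in \ref{GenGORZThm1assump1} cannot control it. For example, $a(n)=e^{\sqrt n}(1+(-1)^n n^{-1})$ satisfies \ref{GenGORZThm1assump1}--\ref{genthmassump2}, yet $L_1(a)(n)<0$ for large even $n$. The genuinely delicate point is therefore not your Step~2. It is obtaining a Theorem~\ref{GORZThm1}-type expansion of $\log\frac{b(n+j)}{b(n)}$ with remainder $o(\d^{2m}(n))$, which for $p$ and $\overline{p}$ comes from their exact Rademacher-type formulas rather than from the leading-order asymptotics alone.
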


As application, we study iterated log-concavity. Define the operator $\mathcal{L}^{[1]}$ on a sequence $\{a(n)\}_{n \geq 0}$ of positive numbers by $\mathcal{L}^{[1]}(a)(n):=\{a^{[1]}(n)\}_{n\geq 0}$ with
\begin{equation}\label{logdef}
a^{[1]}(0):=a^2(0)\ \ \text{and}\ \ a^{[1]}(n):=a^2(n)-a(n+1)a(n-1),\ \text{for}\ n \geq 1.
\end{equation}
Then $\{a(n)\}_{n\ge 0}$ is $\log$-concave iff $\mathcal{L}^{[1]}(a)(n)\ge 0$ for $n\ge 1$. A sequence $\{a(n)\}_{n\ge 0}$ is {\it $r$-$\log$-concave} if $\mathcal{L}^{[r]}(a)(n)\ge 0$, where $\mathcal{L}^{[r]}:=\mathcal{L}^{[1]}\left(\mathcal{L}^{[r-1]}(a)\right)(n).$
A sequence is {\it infinitely $\log$-concave} if it is $r$-log-concave for all $r \in\N$. We show the following general result.
\begin{theorem}\label{InflogconcaveThm}
 Let $\{a(n)\}_{n\ge 0}$ satisfy {\rm{\ref{GenGORZThm1assump1}}} with $|j|\le 2$, {\rm{\ref{GenGORZThm1assump2}}}, {\rm{\ref{genthmassump2}}}, and $\{\d(n)\}_{n\ge 0}$ satisfy {\rm{\ref{InflogconcaveThmassump1}}} with $j=\pm 1$, {\rm{\ref{InflogconcaveThmassump2}}}, and {\rm{\ref{InflogconcaveThmassump3}}}. We have, as $n\to \infty$, 
	\[
	\mathcal{L}^{[r]}(a)(n) \sim 2^{2^{r+1}-r-2}\d(n)^{2^{r+1}-2} a^{2^r}\!(n).
	\]
\end{theorem}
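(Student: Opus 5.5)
We argue by induction on $r$, with the induction hypothesis that $b_r:=\mathcal{L}^{[r]}(a)$ satisfies
\[
b_r(n)\sim c_r\,\d(n)^{e_r}a^{2^r}(n),\qquad c_r:=2^{2^{r+1}-r-2},\quad e_r:=2^{r+1}-2 .
\]
Each step has two parts. First we check that $b_r$ again satisfies \ref{GenGORZThm1assump1}--\ref{genthmassump2}, with a new pair $A_{b_r}$, $\d_{b_r}$ that we compute. Then we apply \Cref{genthm}, in its simplest log-concavity instance, to $b_r$. Since only the values $b_r(n\pm1)$ and $b_r(n)$ enter $\mathcal{L}^{[1]}(b_r)(n)$, we only need \ref{GenGORZThm1assump1} for $b_r$ with $|j|\le 1$. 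Also, $b_r(n)>0$ for $n\gg1$ by the asymptotic, so logarithms make sense and the special value at $n=0$ in \eqref{logdef} plays no role.

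\emph{Base case and the basic step.} Take $T=2$ with $P^{[1]}_{\rho_1,s_1}(a)(n)=a^2(n)$, i.e., $s_1=(0,0)$, and $P^{[1]}_{\rho_2,s_2}(a)(n)=a(n-1)a(n+1)$, i.e., $s_2=(-1,1)$, so $\mathcal{C}=1$. Since $u_T=2$, the sums of squared shifts are $0$ and $2$. Hence $D_0=1-1=0$ and $D_1=0-2=-2\neq0$, so $N=1$ in \ref{assump0} and \ref{assump1}. With $\mathcal{M}(n)=-\d^2(n)$, \Cref{genthm} gives
\[
\mathcal{L}^{[1]}(a)(n)\sim 2\d^2(n)a^2(n).
\]
This is the case $r=1$, and more generally it says $\mathcal{L}^{[1]}(b)\sim 2\d_b^2 b^2$ for any sequence $b$ satisfying \ref{GenGORZThm1assump1} with $|j|\le1$, \ref{GenGORZThm1assump2}, and \ref{genthmassump2}.

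\emph{Inductive step.} Assume the claim for $r$. Using the hypothesis at $n+j$ and at $n$ with $j=\pm1$, then \ref{GenGORZThm1assump1} for $a$ and \ref{InflogconcaveThmassump1} for $\d$, we get
\[
\log\frac{b_r(n+j)}{b_r(n)}=\bigl(e_rB(n)+2^rA(n)\bigr)j-\bigl(2^r\d^2(n)-e_r\b^2(n)\bigr)j^2+o(1).
\]
The ratio $c_r$ cancels, and the multiplicative $1+o(1)$ errors become additive $o(1)$ terms in the logarithm, which is exactly what \ref{GenGORZThm1assump1} allows. So $b_r$ satisfies \ref{GenGORZThm1assump1} with
\[
A_{b_r}:=e_rB+2^rA\to0
\]
by \ref{genthmassump2} and \ref{InflogconcaveThmassump2}, and with
\[
\d_{b_r}^2:=2^r\d^2-e_r\b^2 .
\]
By \ref{InflogconcaveThmassump3} we have $\d_{b_r}^2\sim 2^r\d^2$, which is positive for $n\gg1$ and tends to $0$, so \ref{GenGORZThm1assump2} holds as well. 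The basic step then yields
\[
\mathcal{L}^{[r+1]}(a)(n)\sim 2\cdot2^r\d^2(n)\,c_r^2\,\d(n)^{2e_r}a^{2^{r+1}}(n).
\]
For the exponent of $\d$, $2+2e_r=2^{r+2}-2=e_{r+1}$. For the constant, $\log_2(2^{r+1}c_r^2)=2(2^{r+1}-r-2)+r+1=2^{r+2}-(r+1)-2$, which is $\log_2 c_{r+1}$. This closes the induction.

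The main obstacle is the transfer of \ref{GenGORZThm1assump1} from $a$ to $b_r$. The quadratic coefficient $\d_{b_r}^2$ tends to $0$, while we only control $\log b_r$ up to additive $o(1)$ errors coming from the asymptotic equivalence. One must therefore check that \Cref{genthm} really only uses the expansion in the form stated in \ref{GenGORZThm1assump1}, so that it applies to $b_r$ with these $o(1)$ terms. It is also important that we only need $|j|\le1$, so that the hypotheses on $a$ ($|j|\le2$) and on $\d$ ($j=\pm1$) suffice at every stage. The cancellation of the $\b^2$ term relies on \ref{InflogconcaveThmassump3}, and this is the only place where that assumption is used.
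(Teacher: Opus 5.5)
Your induction is the paper's own. The paper also obtains $r=1$ from the log-concavity case of Theorem~\ref{genthm}, routed through Theorem~\ref{newlem1} with $r=1$ and $\mathcal{L}^{[1]}(a)(n+1)=L_1(a)(n)$; your symmetric shifts $(-1,1)$ avoid that re-centering. It then transfers \textup{(C1)}--\textup{(C3)} to $\mathcal{L}^{[r]}(a)$ with exactly your $A_{b_r}=e_rB+2^rA$ and $\delta_{b_r}^2=2^r\delta^2-e_r\beta^2$.

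However, the obstacle you flag at the end is a genuine gap, which the paper shares. It cannot be closed by checking how Theorem~\ref{genthm} uses \textup{(C1)}, because with an error term $o(1)$ the statement itself fails. Since $A(n)\to0$ and $\delta(n)\to0$, the $o(1)$ absorbs all main terms. So \textup{(C1)}--\textup{(C3)} only say $a(n+j)/a(n)\to1$, and \textup{(C4)}--\textup{(C6)} only say $\delta(n\pm1)/\delta(n)\to1$. In particular, every hypothesis survives $\delta\mapsto2\delta$ (keeping $A,B,\beta$), while the asserted asymptotic changes by the factor $2^{2^{r+1}-2}$. Concretely:
\begin{itemize}
\item $a\equiv1$, $A\equiv B\equiv\beta\equiv0$, $\delta(n)=1/n$ satisfy every hypothesis, yet $\mathcal{L}^{[r]}(a)(n)=0$ for all $n\ge1$.
\item $a(n)=e^{-\sqrt n}$ is log-convex with $a(n+j)/a(n)\to1$, so even the sign of $\mathcal{L}^{[1]}(a)(n)$ comes out wrong.
\end{itemize}

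The step that fails is your basic step $\mathcal{L}^{[1]}(b)\sim2\delta_b^2b^2$. Since $\mathcal{L}^{[1]}(b)(n)/b^2(n)=1-\exp\bigl(\log\frac{b(n+1)}{b(n)}+\log\frac{b(n-1)}{b(n)}\bigr)$, this requires the exponent to be $-2\delta_b^2(n)+o(\delta_b^2(n))$. That is, \textup{(C1)} needs an error $o(\delta_b^2)$, not $o(1)$. The proof of Theorem~\ref{genthm} hides this by pulling one common factor $1+o(1)$ out of a combination whose terms carry different $o(1)$ errors and whose leading parts cancel ($D_0=0$).

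For the same reason, an equivalence $b_r\sim c_r\delta^{e_r}a^{2^r}$ cannot restart the induction. Write $b_r=c_r\delta^{e_r}a^{2^r}(1+\varepsilon_r)$ with merely $\varepsilon_r\to0$. Then the term $\log(1+\varepsilon_r(n+1))+\log(1+\varepsilon_r(n-1))-2\log(1+\varepsilon_r(n))$ enters $\log\frac{b_r(n+1)b_r(n-1)}{b_r^2(n)}$ and is only $o(1)$, whereas it must be $o(\delta^2(n))$. The same precision is needed for the second differences of $\log a$ and $\log\delta$.

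A correct argument has to strengthen both the hypotheses and the inductive statement, carrying the estimate $\log\frac{b_r(n+1)b_r(n-1)}{b_r^2(n)}=-2\delta_{b_r}^2(n)(1+o(1))$ through the induction. This needs control of finite differences of $\log a$ of increasing order, for example from an asymptotic expansion of $\log a(n)$ by a smooth function with controlled derivatives, as is available for $p(n)$.
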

In particular, we settle the following conjecture\footnote{ We make a minor correction of \cite[Conjecture 7.19]{Ban}.} of the first author \cite[Conjecture 7.19]{Ban}. 
\begin{conjecture}\label{Banconj1}
{\it We have, as} $n\to \infty$, 
\[
\mathcal{L}^{[r]}(p)(n)\sim \frac{ \pi^{2^r-1}}{2^{5\cdot 2^{r-1}+r-\frac 12} 3^{2^r-\frac 12}}\frac{ e^{2^r\pi\sqrt{\frac{2}3 n}}}{n^{2^r+3\cdot 2^{r-1}-\frac 32}}. 
\]	
\end{conjecture}

\begin{theorem}\label{inflogthm1} 
Conjecture \ref{Banconj1} is true.
\end{theorem}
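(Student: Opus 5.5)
The conjecture follows by applying \Cref{InflogconcaveThm} to $a=p$. The work is in checking the hypotheses with explicit $A(n)$, $\d(n)$, $B(n)$, $\b(n)$ and then simplifying the constant. The only analytic input is the Hardy--Ramanujan asymptotic \eqref{pnasymp}, written as $p(n)\sim \frac{1}{4\sqrt3}\frac{e^{c\sqrt n}}{n}$ with $c:=\pi\sqrt{\frac23}$.

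\emph{Step 1: condition \ref{GenGORZThm1assump1}.} For fixed $|j|\le 2$, \eqref{pnasymp} gives
\[
\log\left(\frac{p(n+j)}{p(n)}\right)=c\left(\sqrt{n+j}-\sqrt n\right)-\log\left(1+\frac jn\right)+o(1).
\]
Taylor expansion of $\sqrt{n+j}$ turns the right-hand side into $A(n)j-\d^2(n)j^2+o(1)$ with
\[
A(n):=\frac{c}{2\sqrt n}-\frac1n,\qquad \d^2(n):=\frac{c}{8n^{\frac32}}=\frac{\pi}{4\sqrt6}\,n^{-\frac32}.
\]
The cubic Taylor terms are $O(n^{-5/2})$, and the $j^2/n^2$ term from the logarithm is also absorbed into $o(1)$. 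From these formulas, \ref{GenGORZThm1assump2} and \ref{genthmassump2} are immediate.

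\emph{Step 2: conditions on $\d$.} Since $\d(n)=(\frac{\pi}{4\sqrt6})^{\frac12}n^{-\frac34}$, we have
\[
\log\left(\frac{\d(n\pm1)}{\d(n)}\right)=-\frac34\log\left(1\pm\frac1n\right)=\mp\frac{3}{4n}+O\left(n^{-2}\right).
\]
So \ref{InflogconcaveThmassump1} holds with $B(n)=-\frac{3}{4n}$ and $\b(n)=0$ (or $\b^2(n)\asymp n^{-2}$ if one insists on a nonzero choice). Either way \ref{InflogconcaveThmassump2} holds, and \ref{InflogconcaveThmassump3} holds because $\b(n)=o(n^{-3/4})$.

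\emph{Step 3: evaluation.} \Cref{InflogconcaveThm} now gives
\[
\mathcal{L}^{[r]}(p)(n)\sim 2^{2^{r+1}-r-2}\left(\frac{\pi}{4\sqrt6}\right)^{2^r-1}n^{-\frac32(2^r-1)}\frac{e^{2^r c\sqrt n}}{(4\sqrt3)^{2^r}n^{2^r}}.
\]
The exponent of $n$ is $2^r+3\cdot2^{r-1}-\frac32$, as in Conjecture~\ref{Banconj1}. For the powers of $2$:
\[
2^{r+1}-r-2-2(2^r-1)-\frac{2^r-1}{2}-2^{r+1}=-\left(5\cdot2^{r-1}+r-\frac12\right).
\]
For the powers of $3$:
\[
-\frac{2^r-1}{2}-2^{r-1}=-\left(2^r-\frac12\right).
\]
The power of $\pi$ is $2^r-1$. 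This is exactly the constant in Conjecture~\ref{Banconj1}.

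The main obstacle is Step 1: making sure that the error in \eqref{pnasymp}, which is only a factor $1+o(1)$, is enough for \ref{GenGORZThm1assump1}. Since only an $o(1)$ error is required in \ref{GenGORZThm1assump1}, this is fine. If more precision were needed, one would instead use Rademacher's exact formula, or the $I$-Bessel main term, which gives errors $O(n^{-1/2})$ relative.
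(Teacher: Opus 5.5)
Your route is the paper's own: the paper feeds \eqref{pnasymp} into Lemma~\ref{Inftlogconcave:lem}, i.e.\ Lemma~\ref{fact2} followed by Theorem~\ref{InflogconcaveThm}. Your bookkeeping of the constant in Step~3 is correct.

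However, the point you dismiss in your last paragraph is a genuine gap. An $o(1)$ remainder in \ref{GenGORZThm1assump1} cannot produce the conclusion of Theorem~\ref{InflogconcaveThm}. The quantity $\mathcal{L}^{[1]}(b)(n)/b^2(n)=1-\frac{b(n+1)b(n-1)}{b^2(n)}$ has size $\d^2(n)\asymp n^{-3/2}$. So you must know $\log\frac{b(n+1)b(n-1)}{b^2(n)}$ up to $o(n^{-3/2})$, and $b(n)\sim(\text{leading term})$ says nothing at that scale.

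Concretely, take $a(n):=\frac{e^{c\sqrt n}}{4\sqrt3\,n}\bigl(1+\frac{(-1)^n}{n+1}\bigr)$ for $n\ge 1$. It satisfies exactly the asymptotic \eqref{pnasymp} you use. Hence, by Lemma~\ref{Fact1}, it satisfies every hypothesis you verify in Steps~1 and~2, with the same $A,\d,B,\b$. Yet a direct Taylor expansion gives
\[
\frac{\mathcal{L}^{[1]}(a)(n)}{a^2(n)}=\frac{4(-1)^n}{n}+O\left(n^{-\frac32}\right),
\]
which is negative for odd $n$ rather than $\sim 2\d^2(n)$. So no argument using only \eqref{pnasymp} can prove Conjecture~\ref{Banconj1}, already for $r=1$. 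Theorem~\ref{InflogconcaveThm}, and Theorem~\ref{newlem1} on which its base case rests, are not valid with an $o(1)$ remainder in \ref{GenGORZThm1assump1}. The paper's own derivation of this statement has the same defect.

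What you actually need is the following estimate for each $b=\mathcal{L}^{[k]}(p)$ with $0\le k\le r-1$:
\[
\log\frac{b(n+1)b(n-1)}{b^2(n)}=-2^{k+1}\d^2(n)(1+o(1)).
\]
In particular, the inductive step cannot pass from $\mathcal{L}^{[k]}(p)(n)\sim(\text{leading term})$ to the hypotheses at level $k$ via Lemma~\ref{Fact1}.

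Each application of $\mathcal{L}^{[1]}$ consumes a factor $n^{-3/2}$ of relative precision. It therefore suffices to have $p(n)=F(n)\bigl(1+o(n^{-3r/2})\bigr)$, where $F$ is an explicit smooth function whose logarithm has an asymptotic expansion in powers of $n^{-1/2}$ (and $\log n$) that can be differentiated termwise. Rademacher's formula provides this for all $r$ at once: take $F$ to be the first term of Rademacher's series, with relative error $O(e^{-c\sqrt n/2})$. This error stays negligible at every level, because $\mathcal{L}^{[k]}(F)$ is only polynomially smaller than $F^{2^k}$.

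Working with the explicit smooth functions $\mathcal{L}^{[k]}(F)$, Taylor expansion then legitimately yields the second-difference estimate above. Your leading-order computation and the constant then go through unchanged.
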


	The Jensen--P\'olya program, which launched the subject of this paper, arose from P\'olya's proof that the Riemann Hypothesis is equivalent to the Riemann $\Xi$-function $\Xi(s)\!:=\xi(\frac 12+is)$ (here $\xi(s):=\frac{s(s-1)}{2}\pi^{-\frac s2}\Gamma(\frac s2)\z(s)$) lying in the {\it Laguerre}--{\it P\'olya class}, the space of entire functions which are  local limits of hyperbolic polynomials.  This connection inspired Chen, Jia, and Wang to reinterpret log-concavity and other inequalities for the partition function from the lens of Jensen polynomials.
	Schoenberg \cite{Schoenberg} related the Laguerre--P\'olya class to the theory of totally-positive functions, and the slightly stronger condition of P\'olya frequency functions. Recently, Gröchenig \cite{Groechenig} synthesized this and other related function theory to show that the Riemann Hypothesis is true if $\int_\R\Xi(u+\frac 12)^{-1}e^{-ixu}du$ is a P\'olya frequency function. Such total positivity is  closely related\footnote{See also \cite{Katkova, Nuttall} for more on the relation between total positivity and the Riemann Hypothesis.} to requiring that certain Toeplitz matrices have minors which are positive (see \cite{Groechenig}). Here a {\it Toeplitz matrix} of order $r$ with shifted entries is defined by $(a(n-k+\ell))_{1\le k,\ell \le r}$. This inspires us to consider asymptotics of Toeplitz matrices of combinatorial seqences.  Specifically, we now turn to the determinant of the Toeplitz matrix of order $r$ with shifted entries $(a(n-k+\ell))_{1\le k,\ell \le r}$. Set $\operatorname{T}^{[r]}_a(n):=\det(a(n-k+\ell)_{1\le k,\ell\le r})$. Our general result is the following. 
\begin{theorem}\label{detthm}
	Let $\{a(n)\}_{n\ge 0}$ satisfy {\rm{\ref{GenGORZThm1assump1}}} with $|j|\le r-1$ $(r\in \mathbb{N}_{\ge 2})$ and {\rm{\ref{GenGORZThm1assump2}}}. 
	Then we have 
	\[
	\operatorname{T}^{[r]}_a(n)\sim \frac{(r-1)!^r2^{\frac{r(r-1)}{2}}}{\prod_{j=2}^{r-1}j^{j}}\d^{r(r-1)}(n)a^{r}(n)\ \ (\text{as}\ n\to \infty). 
	\]
\end{theorem}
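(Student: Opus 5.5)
The plan is to realize $\operatorname{T}^{[r]}_a(n)/a^r(n)$ as an instance of the quantity $I^{[\bm{M}]}_{\bm{\rho},\bm{s}}(a)(n)$ of \eqref{gendefn} and then apply \Cref{genthm} (or its proof). By the Leibniz formula,
\[
\frac{\operatorname{T}^{[r]}_a(n)}{a^r(n)}=\sum_{\sigma\in S_r}\operatorname{sgn}(\sigma)\prod_{k=1}^{r}\frac{a(n+\sigma(k)-k)}{a(n)} .
\]
I split the permutations into even ones (the $P^{[M_1]}$ part, with $\rho_1=1$) and odd ones (the $P^{[M_2]}$ part). In both cases $T=r$ and $s(k,\sigma)=\sigma(k)-k$, so $|s|\le r-1=\mathcal C$. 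This is exactly the range for which \ref{GenGORZThm1assump1} is assumed.

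\Cref{genthm} also assumes \ref{genthmassump2}, which is not part of the present hypotheses. I will remove this need as follows. For every $\sigma$ we have $\sum_k s(k,\sigma)=\sum_k(\sigma(k)-k)=0$. Hence, after inserting \ref{GenGORZThm1assump1}, the linear terms $A(n)\sum_k s(k,\sigma)$ vanish identically, and
\[
\prod_k \frac{a(n+s(k,\sigma))}{a(n)}=\exp\Big(-\d^2(n)\sum_k s(k,\sigma)^2+o(1)\Big).
\]
Since $T=r\ge2$, the proof of \Cref{genthm} only has to handle the quadratic terms in this situation. I will rerun that argument, or note that it never uses $A(n)\to0$ once the linear parts cancel, to get
\[
\frac{\operatorname{T}^{[r]}_a(n)}{a^r(n)}\sim \frac{D_N}{N!}(-\d^2(n))^N ,\qquad D_t=\sum_{\sigma}\operatorname{sgn}(\sigma)\Big(\sum_k(\sigma(k)-k)^2\Big)^t .
\]

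It remains to check \ref{assump0} and \ref{assump1} with $N=\frac{r(r-1)}{2}$ and to compute $D_N$. Consider the generating function
\[
F(x):=\sum_t \frac{D_t(-x)^t}{t!}=\det\big(e^{-x(\ell-k)^2}\big)_{1\le k,\ell\le r}.
\]
Expand $e^{-x(\ell-k)^2}=e^{-x\ell^2}e^{-xk^2}e^{2xk\ell}$ and pull out the row and column factors. This gives $F(x)=e^{-2x\sum k^2}\det(y_k^{\ell})$ with $y_k=e^{2xk}$. The remaining determinant is a Vandermonde determinant, equal to
\[
\prod_k y_k\prod_{k<k'}(y_{k'}-y_k)=(2x)^{N}\prod_{k<k'}(k'-k)\,(1+O(x)).
\]
So $F$ vanishes to order exactly $N$ at $x=0$, with leading coefficient $2^N\prod_{j=1}^{r-1}j!$. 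This yields $D_t=0$ for $t<N$ and
\[
\frac{D_N}{N!}(-1)^N=2^N\prod_{j=1}^{r-1}j!.
\]
Finally, the elementary identity $\prod_{j=1}^{r-1}j!=\prod_{j=1}^{r-1}j^{r-j}=\frac{(r-1)!^r}{\prod_{j=2}^{r-1}j^j}$, together with $(\d^2)^N=\d^{r(r-1)}$ and $2^N=2^{\frac{r(r-1)}{2}}$, gives the claimed constant.

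The main obstacle is the error control. The determinant is of size $\d^{r(r-1)}(n)\to0$, whereas the hypothesis \ref{GenGORZThm1assump1} is stated with $o(1)$ errors. I therefore must make sure that the mechanism used in the proof of \Cref{genthm} survives when only the cancellation $\sum_k s=0$ is used instead of $A(n)\to0$. That mechanism is the one showing that the errors contribute $o(\d^{2N}(n))$ after the cancellations $D_t=0$. I would first check this point by carefully tracing that proof.
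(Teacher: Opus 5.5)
Your algebra is correct, and it is a self-contained replacement for the paper's determinant evaluation. The factorization $e^{-x(\ell-k)^2}=e^{-xk^2}e^{-x\ell^2}e^{2xk\ell}$ together with Vandermonde gives $\det(e^{-x(\ell-k)^2})_{1\le k,\ell\le r}$ in closed form. That closed form is exactly the recursion $\mathcal{T}_{r+1}=\mathcal{T}_r\prod_{m=1}^{r}(1-e^{-2m\delta^2})$, which the paper imports from the literature and iterates with \Cref{propdet}. You are also right that (C3) is superfluous, because $\sum_k(\sigma(k)-k)=0$.

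The gap is the step you postpone, and tracing \Cref{genthm} cannot fill it, because that proof contains no mechanism making the errors $o(\delta^{2N}(n))$. It pulls the separate $(1+o(1))$ factors of the individual products out of the signed sum as a single common factor $(1+o_T(1))$. That is legitimate only when the sum does not cancel. Here each of the $r!$ Leibniz terms is $e^{-\delta^2(n)\sum_k(\sigma(k)-k)^2}+o(1)$, while the signed sum has size $\delta^{r(r-1)}(n)\to0$, so the $o(1)$'s swamp the main term. The paper's own proof of this theorem makes the same leap in its first display, where it asserts $\operatorname{T}^{[r]}_a(n)/a^r(n)\sim\det(e^{-(k-\ell)^2\delta^2(n)})$ ``by (C1)''.

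In fact the gap cannot be closed under the stated hypotheses at all. For bounded $j$ and $\delta(n)\to0$, the term $\delta^2(n)j^2$ is itself $o(1)$. So if (C1) and (C2) hold for some $\delta$, they hold with the same $A$ for every positive null sequence, e.g.\ $2\delta$, and that multiplies the claimed right-hand side by $2^{r(r-1)}$. Concretely, $a(n)=\exp((-1)^n n^{-1/2})$ satisfies (C1) with $|j|\le1$ and (C2) for $A=0$ and any such $\delta$. Yet $\operatorname{T}^{[2]}_a(n)=a^2(n)-a(n-1)a(n+1)<0$ for all odd $n\ge3$.

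What is needed is a quantitative error term, as in \Cref{GORZThm1}. Suppose $\log(a(n+j)/a(n))=A(n)j-\delta^2(n)j^2+o(\delta^{r(r-1)}(n))$ for $|j|\le r-1$. Then each Leibniz term is $e^{-\delta^2(n)\sum_k(\sigma(k)-k)^2}(1+o(\delta^{r(r-1)}(n)))$. Hence $\operatorname{T}^{[r]}_a(n)/a^r(n)=\det(e^{-(\ell-k)^2\delta^2(n)})+o(\delta^{r(r-1)}(n))$, and your Vandermonde computation finishes the proof directly, with no need for $D_t$ or \Cref{genthm}.

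The expansion may also carry terms $A_i(n)j^i$ ($i\ge3$) with only $A_i(n)=o(\delta^i(n))$. This happens for \eqref{genexample}, where $A_3(n)\asymp n^{\lambda^*-3}$ is not $o(\delta^{r(r-1)}(n))$ once $r\ge3$. Then you must also show these terms move the determinant by only $o(\delta^{r(r-1)}(n))$. One way is to give $\delta$ weight $1$ and $A_i$ weight $i$. Expand the entries $e^{f(\ell-k)}$, with $f(j):=-\delta^2(n)j^2+\sum_{i\ge3}A_i(n)j^i$, in powers of $k$ and $\ell$. Cauchy--Binet then shows that every monomial of the determinant has weight at least $r(r-1)$.
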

Jia and Wang \cite[Conjecture 1.7]{JiaWang} made the following conjecture for $p(n)$.
\begin{conjecture}\label{Detconj1}
{\it 	For $k\in \mathbb{N}_{\ge 2}$, we have $\operatorname{T}^{[r]}_p(n)>0$ for $n\!\gg\!1$}.	
\end{conjecture}
We settle this conjecture by proving the following theorem.
\begin{theorem}\label{detpn}
	As $n\to \infty$, we have 
	\[
	\operatorname{T}^{[r]}_{p}(n)\sim 
	\frac{(r-1)!^r\pi^{\frac{r(r-1)}{2}}}{2^{\frac{3r^2+5r}{4}}3^{\frac{r(r+1)}{4}}\prod_{j=2}^{r-1}j^{j}}\frac{e^{\pi r\sqrt{\frac{2}{3}n}}}{n^{\frac{3r^2+r}{4}}}.
	\]
\end{theorem}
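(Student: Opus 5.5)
Theorem~\ref{detpn} follows from the general \Cref{detthm} applied to $a=p$, so the plan is to check that $p(n)$ satisfies \ref{GenGORZThm1assump1} with $|j|\le r-1$ and \ref{GenGORZThm1assump2}, identify $\d(n)$ explicitly, and then insert the Hardy--Ramanujan asymptotic \eqref{pnasymp} for $p(n)$.

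\textbf{Step 1: verify the hypotheses.} Write $\mu:=\pi\sqrt{2/3}$, so that \eqref{pnasymp} reads $p(n)\sim \frac{1}{4\sqrt3}\frac{e^{\mu\sqrt n}}{n}$. This is the shape \eqref{genexample} with $\mathcal{S}=\{\frac12\}$, $A_{1/2}=\mu$, $c_1=\frac1{4\sqrt3}$, and $c_2=1$. For fixed $j$, Taylor expansion gives
\[
\log\frac{p(n+j)}{p(n)}=\mu\left(\sqrt{n+j}-\sqrt n\right)-\log\left(1+\frac jn\right)+o(1)=\frac{\mu}{2\sqrt n}j-\frac{\mu}{8n^{3/2}}j^2+o(1).
\]
The $o(1)$ term coming from the $\sim$ relation is harmless, because the ratio of the exact values differs from the ratio of the asymptotic forms by a factor $1+o(1)$. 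Cubic and higher terms in $j$ are $O(n^{-5/2})$, and the logarithmic term is $O(1/n)$. So \ref{GenGORZThm1assump1} holds with $A(n)=\frac{\mu}{2\sqrt n}$ and $\d^2(n)=\frac{\mu}{8n^{3/2}}$. Then \ref{GenGORZThm1assump2} is clear, since $\d(n)\to0$ and $\d(n)>0$.

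This matches the general recipe $\d^2=\frac12\l^*(1-\l^*)A_{\l^*}n^{\l^*-2}$, which is consistent with Corollary~\ref{Corgenthm}. This step is the only place where any care is needed, but it is standard.

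\textbf{Step 2: apply \Cref{detthm} and assemble the constant.} \Cref{detthm} gives
\[
\operatorname{T}^{[r]}_p(n)\sim \frac{(r-1)!^r2^{\frac{r(r-1)}2}}{\prod_{j=2}^{r-1}j^j}\,\d^{r(r-1)}(n)\,p^r(n).
\]
The two remaining factors are
\[
\d^{r(r-1)}(n)=\left(\frac{\pi\sqrt2}{8\sqrt3}\right)^{\frac{r(r-1)}2}n^{-\frac{3r(r-1)}4}, \qquad p^r(n)\sim 2^{-2r}3^{-\frac r2}e^{\pi r\sqrt{\frac23 n}}n^{-r}.
\]
Collecting the factors in turn:
\begin{itemize}
\item The power of $n$ is $-\frac{3r^2-3r+4r}{4}=-\frac{3r^2+r}4$.
\item The power of $\pi$ is $\frac{r(r-1)}2$.
\item The power of $3$ is $-\frac{r(r-1)}4-\frac r2=-\frac{r(r+1)}4$.
\item The power of $2$ is $\frac{r(r-1)}2+\frac{r(r-1)}4-\frac{3r(r-1)}2-2r=-\frac{3r(r-1)}4-2r=-\frac{3r^2+5r}4$.
\end{itemize}
These reproduce the stated formula exactly. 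The main obstacle is only bookkeeping of these exponents; the analytic content is entirely contained in \Cref{detthm}.

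Positivity for $n\gg1$, which is Conjecture~\ref{Detconj1}, follows immediately because the leading constant is positive.
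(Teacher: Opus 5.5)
Your route is the paper's. There, \Cref{detpn} is proved by inserting \eqref{pnasymp} into \Cref{detmain1}, which is just \Cref{detthm} combined with \Cref{fact2} and \eqref{impfact}. Your specialization $\delta^2(n)\sim\frac{\pi\sqrt2}{8\sqrt3}n^{-3/2}$ and your exponent bookkeeping for $n$, $\pi$, $2$, $3$ are correct.

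However, the step you call harmless is a genuine gap, and the paper's argument has the same gap. The quantity $\operatorname{T}^{[r]}_p(n)/p^r(n)\asymp n^{-3r(r-1)/4}$ arises from cancellation among $r!$ products of size about $1$. Knowing each ratio $p(n+j)/p(n)$ only up to a factor $1+o(1)$ cannot determine its main term. In fact, with an $o(1)$ error, \ref{GenGORZThm1assump1} carries no information about $\delta$:
\begin{itemize}
\item $p$ satisfies \ref{GenGORZThm1assump1} and \ref{GenGORZThm1assump2} with $A(n)=0$ and any $\delta(n)\to0$ (e.g.\ $n^{-1/10}$), so \Cref{detthm} would give mutually incompatible asymptotics;
\item $a(n)\equiv1$ with $A(n)=0$ and $\delta(n)=\frac1n$ satisfies the hypotheses, yet $\operatorname{T}^{[2]}_a(n)=0\not\sim 2\delta^2(n)$.
\end{itemize}
So \Cref{detthm} cannot be used as a black box in the form stated. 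The unjustified precision enters at the first step of its proof, $\operatorname{T}^{[r]}_a(n)/a^r(n)\sim\det(e^{-(k-\ell)^2\delta^2(n)})$.

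To close the gap you need a hypothesis of the shape in \Cref{GORZThm1}, with degree $d=r(r-1)$. For $|j|\le r-1$ one needs $\log\frac{p(n+j)}{p(n)}=A(n)j-\delta^2(n)j^2+\sum_{m=3}^{d}A_m(n)j^m+o(\delta^{d}(n))$ with $A_m(n)=o(\delta^m(n))$.

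The leading Hardy--Ramanujan term does not give this, but Rademacher's formula does. It yields $p(n)=e^{g(n)}(1+O(e^{-c\sqrt n}))$, where $e^{g(n)}$ is the $k=1$ term of the series. This $g$ satisfies $-\frac12 g''(n)\sim\frac{\pi\sqrt2}{8\sqrt3}n^{-3/2}$ and $g^{(m)}(n)=O(n^{\frac12-m})=o(n^{-\frac{3m}{4}})$ for $m\ge3$. The argument then runs as follows:
\begin{itemize}
\item The exponentially small error changes the determinant only by $O(p^r(n)e^{-c\sqrt n})$.
\item The linear term $g'(n)(\ell-k)$ drops out after conjugating by a diagonal matrix.
\item Expand in the rescaled variable $\delta(n)(\ell-k)$ and apply Cauchy--Binet to the Taylor expansion. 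Only the index block $\{0,\dots,r-1\}$ contributes at order $\delta^{r(r-1)}$, and its coefficient tends to the Gaussian one.
\end{itemize}
This gives $\operatorname{T}^{[r]}_p(n)\sim p^r(n)\det(e^{-(k-\ell)^2\delta^2(n)})$, and from there your evaluation of the constant goes through unchanged.
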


Mukherjee \cite[Problem 4.1]{MDet} asked about positivity of $T^{[r]}_{\overline{p}}(n)$ for $r\in \mathbb{N}_{\ge 4}$ and $n\!\gg\!1$. We give an affirmative answer by proving the following theorem.
\begin{theorem}\label{detopn}
	As $n\to \infty$, we have 
	\[
	\operatorname{T}^{[r]}_{\overline{p}}(n)\sim 
	\frac{(r-1)!^r\pi^{\frac{r(r-1)}{2}}}{2^{r^2+2r}\prod_{j=2}^{r-1}j^{j}}\frac{e^{\pi r\sqrt{n}}}{n^{\frac{3r^2+r}{4}}}.
	\]
\end{theorem}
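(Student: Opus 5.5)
The plan is to deduce the statement directly from Theorem~\ref{detthm} applied to $a=\overline{p}$. This requires checking \ref{GenGORZThm1assump1} for all $|j|\le r-1$ and \ref{GenGORZThm1assump2}, with an explicit $\d(n)$, and then substituting $\d(n)$ and the asymptotic of $\overline{p}(n)$ into the formula of Theorem~\ref{detthm}.

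\emph{Step 1: asymptotic input.} From the exact Rademacher-type formula for overpartitions (Zuckerman), or via the circle method applied to $\frac{(-q)_\infty}{(q)_\infty}=\frac{\eta(2\tau)}{\eta(\tau)^2}$, we have
\[
\overline{p}(n)=\frac{e^{\pi\sqrt n}}{8n}\left(1+O\left(n^{-\frac12}\right)\right).
\]
Equivalently, $\overline{p}$ is of the shape \eqref{genexample} with $\mathcal S=\{\frac12\}$, $A_{1/2}=\pi$, $c_1=\frac18$ and $c_2=1$.

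\emph{Step 2: verify the assumptions.} Write $f(n):=\pi\sqrt n-\log n-\log 8$. For fixed $j$ with $|j|\le r-1$, Taylor expansion gives
\[
\log\frac{\overline{p}(n+j)}{\overline{p}(n)}=f'(n)j+\frac{f''(n)}{2}j^2+O\left(n^{-\frac32}\right)+O\left(n^{-\frac12}\right).
\]
Here $f''(n)=-\frac{\pi}{4}n^{-\frac32}+n^{-2}$. So \ref{GenGORZThm1assump1} holds with $A(n)=f'(n)$ and
\[
\d^2(n)=\frac{\pi}{8}n^{-\frac32}-\frac{1}{2n^2}.
\]
This $\d(n)$ is positive for large $n$ and tends to $0$, which gives \ref{GenGORZThm1assump2}. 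Moreover $\d(n)\sim \sqrt{\pi/8}\,n^{-\frac34}$. The $n^{-2}$ correction could even be dropped, since $j^2/n^2=o(1)$.

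\emph{Step 3: compute.} Theorem~\ref{detthm} gives
\[
\operatorname{T}^{[r]}_{\overline{p}}(n)\sim \frac{(r-1)!^r2^{\frac{r(r-1)}2}}{\prod_{j=2}^{r-1}j^j}\left(\frac{\pi}{8}\right)^{\frac{r(r-1)}2}n^{-\frac{3r(r-1)}4}\,\frac{e^{\pi r\sqrt n}}{8^rn^r}.
\]
Collect the powers of $2$: $2^{\frac{r(r-1)}{2}-\frac{3r(r-1)}{2}-3r}=2^{-r^2-2r}$. The power of $n$ is $n^{-\frac{3r^2+r}4}$. This is exactly the claimed asymptotic.

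\emph{Main obstacle.} The only nonformal point is Step 1: we need the main term with a relative error that is $o(1)$ uniformly across the finitely many shifts $n+j$. This follows from the convergent series for $\overline{p}(n)$, in which the leading Bessel term gives $\frac{e^{\pi\sqrt n}}{8n}(1+O(n^{-1/2}))$ and the remaining terms are exponentially smaller.
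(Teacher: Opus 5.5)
Your route is the paper's route. Its proof of this theorem is a one-line application of Corollary~\ref{detmain1} (i.e.\ Theorem~\ref{detthm} plus Lemma~\ref{fact2}) to $\overline{p}(n)\sim\frac{e^{\pi\sqrt n}}{8n}$, and your $\delta(n)$ and your bookkeeping of the powers of $2$ and $n$ agree with it.

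\textbf{The gap.} The step you single out as the only nonformal point is resolved incorrectly, and this is a genuine gap, shared with the paper's one-line argument. A relative error that is merely $o(1)$, or your $O(n^{-1/2})$, cannot determine $\operatorname{T}^{[r]}_{\overline{p}}(n)$:
\begin{itemize}
\item The normalized determinant $\operatorname{T}^{[r]}_{\overline{p}}(n)/\overline{p}(n)^r$ has size $\asymp n^{-\frac{3r(r-1)}{4}}$. This comes from cancellation among $r!$ Leibniz terms, each of size $\asymp 1$.
\item Independent $o(1)$ errors in those terms perturb the determinant by $o(1)$, which swamps the main term.
\item The first step in the proof of Theorem~\ref{detthm}, $\operatorname{T}^{[r]}_a(n)/a^r(n)\sim\det\bigl(e^{-(k-\ell)^2\delta^2(n)}\bigr)$, silently needs the errors in $\log\frac{a(n+j)}{a(n)}$ to be $o(\delta^{r(r-1)}(n))$.
\end{itemize}
As stated, the theorem is false. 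Take $a(n)=\exp\bigl(\sqrt n+\frac{(-1)^n}{\log n}\bigr)$. It satisfies $a(n)\sim e^{\sqrt n}$, \ref{GenGORZThm1assump1} with $|j|\le 1$, $A(n)=\frac{1}{2\sqrt n}$, $\delta^2(n)=\frac18 n^{-\frac32}$, and \ref{GenGORZThm1assump2}. Yet $\operatorname{T}^{[2]}_a(n)=a^2(n)-a(n-1)a(n+1)<0$ for all large odd $n$. The reason is that the oscillating part contributes about $\frac{4}{\log n}\gg n^{-\frac32}$ to $\log\frac{a(n-1)a(n+1)}{a^2(n)}$.

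\textbf{How to close it.} You must use the exact formula at full strength, not just its leading term.
\begin{enumerate}
\item Its $k=1$ term is $\frac{e^{\pi\sqrt n}}{8n}\bigl(1-\frac{1}{\pi\sqrt n}\bigr)$ up to an exponentially small term, and the remaining terms are exponentially smaller. Hence $\log\overline{p}(n)=F(n)+O(e^{-c\sqrt n})$ with $F(x)=\pi\sqrt x-\log(8x)+\log\bigl(1-\frac{1}{\pi\sqrt x}\bigr)$, a smooth function with $F^{(k)}(x)\ll_k x^{\frac12-k}$.
\item Put $\delta^2(n):=-\frac12F''(n)$ exactly. Your remark that the $n^{-2}$ term can be dropped is another symptom of the same problem once $r\ge 3$.
\item Taylor expand $F(n+j)-F(n)$ to an order $K$ with $K+\frac12>\frac{3r(r-1)}{4}$. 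This gives $\log\frac{\overline{p}(n+j)}{\overline{p}(n)}=A(n)j-\delta^2(n)j^2+\sum_{k=3}^{K}A_k(n)j^k+o(\delta^{r(r-1)}(n))$ with $A_k(n)=o(\delta^k(n))$. These are hypotheses of the type in Theorem~\ref{GORZThm1}.
\item Prove the determinant asymptotic under these hypotheses. Conjugating by $\operatorname{diag}(e^{A(n)\ell})$ removes the linear term without changing the determinant. The entries become $g_n(\delta(n)(\ell-k))\bigl(1+o(\delta^{r(r-1)}(n))\bigr)$ with $g_n(u)=\exp\bigl(-u^2+\sum_{k=3}^{K}A_k(n)\delta^{-k}(n)u^k\bigr)\to e^{-u^2}$. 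A Cauchy--Binet (confluent Vandermonde) expansion then shows $\det\bigl(g_n(\delta(\ell-k))\bigr)=\delta^{r(r-1)}(c_r+o(1))$, with the same constant $c_r$ as in the Gaussian case.
\end{enumerate}
With that lemma in place, your Step 3 computation goes through unchanged.
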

 We also consider the following conjectures of Yang. 
\begin{conjecture}\cite[Conjecture 6.3]{Yang1}\label{Detconj3}
{\it For $k\!\in\!\{1,2\}$ and $1\!\le\!r\!\le\!14$, $\operatorname{T}^{[r]}_{b_k}(n)>0$ for}\hspace{0.05 cm}\footnote{The values of $(N_{b_k}(r))_{\substack{1\le k\le 2\\1\le r\le 14}}$ were given in \cite[Conjecture 6.3]{Yang1}.} $n\ge N_{b_k}(r)$.	
\end{conjecture}
\begin{conjecture}\cite[Conjecture 5.1 (2)]{Yang}\label{Detconj4}
{\it For $1\le r\le 11$ and $1\le k\le 5$, $\operatorname{T}^{[r]}_{\Delta^{[k]}(p)}(n), \operatorname{T}^{[r]}_{\Delta^{[k]}(\overline{p})}(n)>0$ for}\footnote{The values of $(D_p(k,r))_{\substack{1\le r\le 11\\1\le k\le 5}}$ and $(D_{\overline{p}}(k,r))_{\substack{1\le r\le 11\\1\le k\le 5}}$ were given in \cite[Table 3 and Table 4]{Yang}.} {\it $n\ge D_p(k,r)$ and $n\ge D_{\overline{p}}(k,r)$, respectively, except if both $k\ge 3$ and $r$ are odd}.
\end{conjecture}
We make partial progress on these conjectures by proving the following theorem. 
\begin{theorem}\label{detbpn}
As $n\to \infty$, we have 
\begin{align*}
\operatorname{T}^{[r]}_{b_k}(n)&\sim \frac{(r-1)!^r\pi^{\frac{r(r-1)}{2}} (5k+2)^{\frac{r^2}{4}+\frac r2}}{2^{\frac{3r^2}{4}+2r}3^{\frac{r^2}{4}+\frac r2}(2k+1)^{\frac{r^2}{4}+\frac r2}\prod_{j=2}^{r-1}j^{j}}
\frac{e^{\pi r\sqrt{\frac{2(5k+2)}{3(2k+1)}n}}}{n^{\frac{3r^2}{4}+\frac r2}},\\
\operatorname{T}^{[r]}_{\Delta^{[k]}(p)}(n)&\sim \frac{(r-1)!^r\pi^{\frac{r(r-1)}{2}+kr}}{2^{\frac{3r^2+\left(2k+5\right)r}{4}}3^{\frac{r^2+(2k+1)r}{4}}\prod_{j=2}^{r-1}j^{j}} 
\frac{e^{\pi r\sqrt{\frac{2}{3}n}}}{n^{\frac{3r^2+(2k+1)r}{4}}},\\
\operatorname{T}^{[r]}_{\Delta^{[k]}(\overline{p})}(n)&\sim \frac{(r-1)!^r\pi^{\frac{r(r-1)}{2}+kr}}{2^{r^2+(k+2)r}\prod_{j=2}^{r-1}j^{j}} \frac{e^{\pi r\sqrt{n}}}{n^{\frac{3r^2+(2k+1)r}{4}}}.
\end{align*}
\end{theorem}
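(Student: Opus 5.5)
The plan is to obtain all three asymptotics from Theorem~\ref{detthm}, which gives
\[
\operatorname{T}^{[r]}_a(n)\sim C_r\,\d^{r(r-1)}(n)\,a^r(n),\qquad C_r:=\frac{(r-1)!^r2^{\frac{r(r-1)}{2}}}{\prod_{j=2}^{r-1}j^j}.
\]
For each of $a\in\{b_k,\Delta^{[k]}(p),\Delta^{[k]}(\overline{p})\}$ I would do three things in order:
\begin{enumerate}
\item write $a(n)$ in the shape \eqref{genexample} with $\mathcal{S}=\{\frac12\}$;
\item check that such a sequence satisfies \ref{GenGORZThm1assump1} with $|j|\le r-1$ and \ref{GenGORZThm1assump2};
\item substitute the resulting $\d(n)$ and $a(n)$ into Theorem~\ref{detthm}.
\end{enumerate}
For a sequence of shape \eqref{genexample}, Taylor expanding $\sum_\l A_\l(d)(n+j)^\l-c_2(d)\log(n+j)$ in $j$ gives \ref{GenGORZThm1assump1} with
\[
A(n)\sim \l^*A_{\l^*}(d)n^{\l^*-1},\qquad \d^2(n)=\frac{\l^*(1-\l^*)A_{\l^*}(d)}{2}n^{\l^*-2}.
\]
Here the cubic and higher terms are $O(n^{\l^*-3})$. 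These are the same normalizations that produce Corollary~\ref{Corgenthm}. With $\l^*=\frac12$ this becomes $\d^2(n)=\frac{A_{1/2}}{8}n^{-\frac32}$, and \ref{GenGORZThm1assump2} is immediate.

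For $b_k$, I would take the asymptotic of shape \eqref{genexample} already used for Theorem~\ref{brokenLagthm1}. It has $A_{1/2}=\pi\sqrt{\frac{2(5k+2)}{3(2k+1)}}$ and $c_2=\frac54$, and it follows from the modular transformation of \eqref{genfunc2} and a circle-method or Ingham-type Tauberian argument. Plugging this into Theorem~\ref{detthm}, the power of $n$ is $\frac{3r(r-1)}{4}+\frac{5r}{4}=\frac{3r^2}{4}+\frac r2$, which matches the claim. The constant is $C_r(\frac{A_{1/2}}8)^{r(r-1)/2}c_1^r$, and simplifying it is bookkeeping.

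For $\Delta^{[k]}(p)$ and $\Delta^{[k]}(\overline{p})$, I would first apply Theorem~\ref{newsec4lem1} with $h=1$ to $p$ and $\overline{p}$. This gives $\Delta^{[k]}(a)(n)\sim A^k(n)a(n)$ with $A(n)\sim\frac12A_{1/2}n^{-\frac12}$. For $p$ this is $A_{1/2}=\pi\sqrt{2/3}$, $c_1=\frac1{4\sqrt3}$, $c_2=1$, and for $\overline{p}$ it is $A_{1/2}=\pi$, $c_1=\frac18$, $c_2=1$. Hence $\Delta^{[k]}(a)$ is again of shape \eqref{genexample}, with the same exponential, $c_1$ multiplied by $(\frac{A_{1/2}}2)^k$, and $c_2$ increased by $\frac k2$. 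Its $\d(n)$ is therefore the same as that of $p$, respectively $\overline{p}$.

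As a sanity check for $\overline{p}$: the power of $2$ in the denominator is $\frac{3r(r-1)}2+3r+kr-\frac{r(r-1)}2=r^2+2r+kr$, and the power of $n$ is $\frac{3r(r-1)}4+r+\frac{kr}2=\frac{3r^2+(2k+1)r}{4}$. Both agree with the claimed formula. The case of $p$ is the same computation with $A_{1/2}=\pi\sqrt{2/3}$.

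The main obstacle is step (2) for $\Delta^{[k]}(a)$. A bare relation $\Delta^{[k]}(a)(n)\sim A^k(n)a(n)$ only controls $\log\frac{\Delta^{[k]}(a)(n+j)}{\Delta^{[k]}(a)(n)}$ up to $o(1)$. To make the $j^2$-term $-\d^2(n)j^2$ genuinely meaningful, I would use the full Rademacher-type expansions of $p$ and $\overline{p}$ (a Bessel function, hence $e^{A_{1/2}\sqrt n}n^{-c_2}$ times a power series in $n^{-1/2}$). From these I would show that $\Delta^{[k]}(a)(n)$ equals $e^{A_{1/2}\sqrt n}n^{-c_2-k/2}$ times an asymptotic power series in $n^{-1/2}$. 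The logarithm of that series then contributes only terms of the form $c\,j\,n^{-1}+O(n^{-3/2}j^2)$ with smaller coefficients, which can be absorbed into $A(n)$ and into $\d^2(n)(1+o(1))$. The same remark applies to $b_k$. Once this is in place, Theorem~\ref{detthm} applies directly, and the remaining work is the constant simplification.
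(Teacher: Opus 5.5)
Your proposal is the paper's proof. The paper applies Corollary~\ref{detmain1}, which is Theorem~\ref{detthm} specialized to sequences of shape \eqref{genexample} via Lemma~\ref{fact2} and \eqref{impfact}, to Lemma~\ref{brokenlem1} for $b_k$ and to Corollary~\ref{shiftediffthm} (Theorem~\ref{newsec4lem1} with $h=1$) for $\Delta^{[k]}(p)$ and $\Delta^{[k]}(\overline{p})$, and the bookkeeping you outline reproduces the stated constants and exponents.

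The Rademacher-type refinement in your last paragraph is not used in the paper, because \ref{GenGORZThm1assump1} only asks for an $o(1)$ error, which Lemma~\ref{Fact1} extracts from the bare asymptotics. Your concern that such an error swallows $-\delta^2(n)j^2$ is well founded, but it is a weakness of \ref{GenGORZThm1assump1} and of the termwise first step in the proof of Theorem~\ref{detthm}, not of the input. Sharper expansions of $a(n)$ would therefore only help together with a quantitative version of that step, one that controls the cancellation down to size $\delta^{r(r-1)}(n)$, and you do not supply it.
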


 The {\it $k$-fold} ($k\in \mathbb{N}_0$) {\it application} of $L_1$ (see \eqref{Lagineq}) on $\{a(n)\}_{n\ge 0}$ is defined as
\begin{equation}\label{itlagdef}
L^{[0]}_1(a)(n):=a(n)\ \ \text{ and }\ \  L^{[k]}_1:=L^{[1]}_1\left(L^{[k-1]}_1(a)\right)(n)\ \ \text{ for }\ \ k\in \mathbb{N}.
\end{equation} 
\begin{theorem}\label{itlagthm1}
 Let $\{a(n)\}_{n\ge 0}$ satisfy {\rm{\ref{GenGORZThm1assump1}}} with $|j|\le 2r$, {\rm{\ref{GenGORZThm1assump2}}}, {\rm{\ref{genthmassump2}}} and $\{\d(n)\}_{n\ge 0}$ satisfy {\rm{\ref{InflogconcaveThmassump1}}} with $j=\pm 1$, {\rm{\ref{InflogconcaveThmassump2}}}, and {\rm{\ref{InflogconcaveThmassump3}}}. Then, for $k\in \mathbb{N}_0$ and $r\in \mathbb{N}$, we have, as $n\to \infty$, 
	\[
	L^{[k]}_1\left(L_r(a)\right)(n)\sim (2r-1)!!^{2^k}2^{ 2^{k+1}\left(r+1\right)-k-3}\d(n)^{2^{k+1}\left(r+1\right)-2} a^{2^{k+1}}(n). 
	\]
\end{theorem}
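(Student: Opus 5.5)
We argue by induction on $k$, reducing each step to the already established Theorems \ref{newlem1} and \ref{InflogconcaveThm}. The case $k=0$ is exactly Theorem \ref{newlem1}:
\[
L_r(a)(n)\sim (2r-1)!!\,2^{2r-1}\d^{2r}(n)a^2(n).
\]
This agrees with the claimed formula at $k=0$, since $2^{2(r+1)-3}=2^{2r-1}$ and the power of $\d$ is $2(r+1)-2=2r$.

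For the inductive step, set $b(n):=L^{[k]}_1(L_r(a))(n)$. The key observation is that $L_1(b)(n)=b^2(n)-b(n-1)b(n+1)=\mathcal{L}^{[1]}(b)(n)$. So it suffices to show that $b$ satisfies the hypotheses of Theorem \ref{InflogconcaveThm} with $r=1$, with an explicit $\d_b$. That theorem then gives
\[
L_1(b)(n)\sim 2\,\d_b^2(n)\,b^2(n).
\]

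The natural guess is $\d_b=\d$, up to a $(1+o(1))$ factor. We would verify this via \ref{GenGORZThm1assump1} for $b$ with $|j|\le 2$, by writing $b(n)=c_k\,\d(n)^{e_k}a^{2^k}(n)(1+\epsilon_k(n))$, where $e_k:=2^{k+1}(r+1)-2-2^{k+1}$. Then
\[
\log\frac{b(n+j)}{b(n)}=2^k\log\frac{a(n+j)}{a(n)}+e_k\log\frac{\d(n+j)}{\d(n)}+\log\frac{1+\epsilon_k(n+j)}{1+\epsilon_k(n)}.
\]
Using \ref{GenGORZThm1assump1} for $a$ and \ref{InflogconcaveThmassump1} for $\d$, the first two terms combine to
\[
(2^kA+e_kB)j-(2^k\d^2-e_k\b^2)j^2+o(1).
\]
This yields $A_b=2^kA+e_kB\to0$ and $\d_b^2=2^k\d^2(1+o(1))$ by \ref{InflogconcaveThmassump3}. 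Hence \ref{GenGORZThm1assump2} and \ref{genthmassump2} hold for $b$. Moreover $\d_b(n+j)/\d(n+j)\to 2^{k/2}$, so $\d_b$ inherits \ref{InflogconcaveThmassump1}--\ref{InflogconcaveThmassump3} from $\d$, with the same $B$ and $\b$ up to $o(1)$.

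Substituting $\d_b^2\sim 2^k\d^2$ gives $L^{[k+1]}_1\sim 2^{k+1}\d^2 b^2$. Squaring the inductive hypothesis yields exactly the stated constant: the power of $2$ becomes $2(2^{k+1}(r+1)-k-3)+k+1=2^{k+2}(r+1)-(k+1)-3$, and the powers of $\d$ and $a$ also match.

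\textbf{Main obstacle.} The delicate point is the error term $\epsilon_k$. A bare asymptotic $b\sim\cdots$ only gives $\epsilon_k(n)=o(1)$. That is enough for the $o(1)$ in \ref{GenGORZThm1assump1}, but Theorem \ref{InflogconcaveThm} really extracts a second-order quantity of size $\d_b^2\to0$ from $\log b(n+j)$. An uncontrolled $o(1)$ in $\log b$ could swamp this. So the induction must carry a stronger statement. We would try to prove, alongside the main asymptotic, that $b(n)=c_k\d^{e_k}(n)a^{2^k}(n)(1+\epsilon_k(n))$ with second differences of $\log(1+\epsilon_k)$ that are $o(\d^2(n))$. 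Equivalently, we would strengthen the $o(1)$ in \ref{GenGORZThm1assump1} to $o(\d^2)$ in the relevant combination.

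Failing that, we would reinterpret $o(1)$ in \ref{GenGORZThm1assump1} as the paper presumably intends in applying Theorem \ref{InflogconcaveThm} iteratively, namely in the same sense in which Theorem \ref{InflogconcaveThm} itself iterates $\mathcal{L}^{[1]}$. We would then mirror its proof verbatim, with $L_r(a)$ in place of $\mathcal{L}^{[1]}(a)$ at the first stage.
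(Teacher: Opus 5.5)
Your overall route is the paper's: induction on $k$ with base case \Cref{newlem1}, transfer of the inductive asymptotic for $a_k:=L^{[k]}_1(L_r(a))$ to (C1)--(C3), and of the new $\d_k$ to (C4)--(C6), via \Cref{Fact1}, and then the case $r=1$. The paper applies \Cref{newlem1} with $r=1$ directly. Going through \Cref{InflogconcaveThm} is equivalent, with one correction: $L_1(b)(n)=b^2(n+1)-b(n)b(n+2)=\mathcal{L}^{[1]}(b)(n+1)$, not $\mathcal{L}^{[1]}(b)(n)$. This is harmless, since $b(n+1)/b(n)\to1$ and $\d_b(n+1)/\d_b(n)\to1$.

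\textbf{The exponents in your inductive step are wrong.} The hypothesis gives $b(n)\sim c_k\,\d(n)^{2^{k+1}(r+1)-2}a^{2^{k+1}}(n)$, not $\d^{e_k}a^{2^k}$. Hence
\[
A_b=2^{k+1}A+\left(2^{k+1}(r+1)-2\right)B,\qquad \d_b^2=2^{k+1}\d^2-\left(2^{k+1}(r+1)-2\right)\b^2\sim 2^{k+1}\d^2 .
\]
These are exactly the paper's $A_k$ and $\d_k$, and the step gives $L_1(b)\sim 2^{k+2}\d^2b^2$, not $2^{k+1}\d^2b^2$. Your final check hides this with an arithmetic slip: $2(2^{k+1}(r+1)-k-3)+k+1=2^{k+2}(r+1)-k-5$, whereas the target exponent is $2^{k+2}(r+1)-k-4$. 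With the correct factor $2^{k+2}$ the exponents agree. As a check at $k=0$: $b=L_r(a)$ carries $a^2$, so $\d_b^2\sim 2\d^2$ and $L_1(b)\sim 4\d^2b^2$, which gives the required $2^{4r}$. Your version gives $2^{4r-1}$.

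\textbf{Your ``main obstacle'' is a real issue, and the paper does not overcome it.} Its proof is precisely your fallback: it obtains (C1) for $a_k$ from the bare asymptotic $a_k\sim\cdots$ through \Cref{Fact1}, with only an $o(1)$ error. Since $A(n)\to0$ and $\d(n)\to0$, conditions (C1)--(C3) with an $o(1)$ error say nothing beyond $a(n+j)/a(n)\to1$, and they do not constrain $\d$ at all. Likewise (C4)--(C6) amount to $\d(n\pm1)/\d(n)\to1$.

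For example, take $a\equiv1$, $A\equiv0$, $\d(n)=n^{-\frac14}$, $B(n)=-\frac{1}{4n}$, $\b(n)=\frac{1}{2\sqrt2\,n}$. These satisfy every hypothesis, but $L_r(a)\equiv0$, and hence $L^{[k]}_1(L_r(a))\equiv0$. So the statement as literally formulated fails, already at $k=0$, i.e.\ for \Cref{newlem1}.

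A correct version needs the error terms in (C1) and (C4) to be small relative to the relevant powers of $\d$, e.g.\ $o(\d^{2r}(n))$, in the spirit of the $o(\d^m(n))$ in \Cref{GORZThm1}. The induction must then propagate an estimate
\[
\log\frac{a_k(n+j)}{a_k(n)}=A_kj-\d_k^2j^2+o\left(\d^2(n)\right),\qquad |j|\le 2 .
\]
This cannot be read off from $a_k\sim\cdots$. It needs extra input, such as a smooth main term for $\log a_k$ plus a remainder whose differences are $o(\d^2(n))$. You name this strengthening but do not carry it out. So once the exponents are fixed, your proof is as complete as the paper's: valid only if the $o(1)$ terms are tacitly read as negligible against $\d^2$.
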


 We next consider a conjecture of Dou, Tang, and Wang \cite[Conjecture 6.6]{DTW}. To prove positivity of an infinite class of polynomials depending on a sequence $\{a(n)\}_{n\ge 0}$, at first seems that one needs to write explicit expressions for each of these polynomials. As the authors of \cite{DTW} pointed out below (6.5) loc. cit.: ``it seems to be difficult to represent $L^{[k]}_1\left(L_r(a)\right)(n)$ in a unified form $\ldots$ the number of terms increases rapidly.'' However, we overcome this problem by using induction on  iterated operators to obtain asymptotics and positivity results without closed-form expressions. Our next result concerns $L^{[k]}_1\left(L_r(a)\right)(n)$, however, as we see, the same principle applies to other expressions such as infinite log-concavity and determinants of Toeplitz matrices. 
 
 Returning to the specific conjecture at hand, recall that Andrews \cite{SPT} defined  $\textup{spt}(n)$ as the total number of appearances of the smallest part in partitions of $n$. 

\begin{conjecture}\label{DTWConj}
{\it Let $a(n)\in \{p(n),\overline{p}(n),\textup{spt}(n)\}$. For $n\!\gg\!1$ and $k,r\in\mathbb{N}$, $	L^{[k]}_1\left(L_r(a)\right)(n)>0$.}
\end{conjecture}

We settle this conjecture by proving the following theorem.

\begin{theorem}\label{itlagthm3}
	As $n\to \infty$, we have
	\begin{align*}
	L^{[k]}_1\left(L_r(p)\right)(n)&\sim \frac{(2r-1)!!^{2^k}\pi^{2^k(r+1)-1}}{2^{2^{k-1}\left(r+9\right)+k+\frac 12}3^{2^{k-1}\left(r+3\right)-\frac 12}}\frac{e^{2^{k+1}\pi\sqrt{\frac{2}{3}n}}}{n^{2^{k-1}\left(3r+7\right)-\frac 32}},\\
	L^{[k]}_1\left(L_r(\overline{p})\right)(n)&\sim \frac{(2r-1)!!^{2^k}\pi^{2^k(r+1)-1}}{2^{2^k\left(r+7\right)+k}}\frac{e^{2^{k+1}\pi\sqrt{n}}}{n^{2^{k-1}\left(3r+7\right)-\frac 32}},\\ 
	L^{[k]}_1\left(L_r(\textup{spt})\right)(n)&\sim \frac{(2r-1)!!^{2^k}\pi^{2^k(r-1)-1}}{2^{2^{k-1}\left(r+7\right)+k+\frac 12}3^{2^{k-1}\left(r+1\right)-\frac 12}}\frac{e^{2^{k+1}\pi\sqrt{\frac{2}{3}n}}}{n^{2^{k-1}\left(3r+5\right)-\frac 32}}.
	\end{align*}	
 In particular, Conjecture \ref{DTWConj} is true.
\end{theorem}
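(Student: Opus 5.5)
The plan is to obtain all three asymptotics as specializations of Theorem~\ref{itlagthm1}. For that I will exhibit explicit sequences $A(n)$ and $\d(n)$ for $a\in\{p,\overline{p},\textup{spt}\}$, and then check \ref{GenGORZThm1assump1}--\ref{genthmassump2} for $a$ and \ref{InflogconcaveThmassump1}--\ref{InflogconcaveThmassump3} for $\d$.

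\textbf{Step 1: the asymptotic inputs.} All three sequences have the shape \eqref{genexample} with $\mathcal{S}=\{\frac12\}$:
\begin{itemize}
\item $p(n)\sim\frac{1}{4\sqrt3 n}e^{\pi\sqrt{\frac23 n}}$;
\item $\overline{p}(n)\sim\frac{1}{8n}e^{\pi\sqrt n}$;
\item $\textup{spt}(n)\sim\frac{\sqrt6}{\pi}\sqrt n\,p(n)=\frac{1}{\pi\sqrt8\sqrt n}e^{\pi\sqrt{\frac23 n}}$ (Bringmann's asymptotic).
\end{itemize}
Write $a(n)=f(n)(1+o(1))$ with $f(n)=c_1n^{-c_2}e^{A_{1/2}\sqrt n}$. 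Then for fixed $j$ we have $\frac{a(n+j)}{a(n)}=\frac{f(n+j)}{f(n)}(1+o(1))$, so it suffices to Taylor expand $\log\frac{f(n+j)}{f(n)}$. This gives
\[
\log\frac{f(n+j)}{f(n)}=\Big(\tfrac{A_{1/2}}{2\sqrt n}-\tfrac{c_2}{n}\Big)j-\tfrac{A_{1/2}}{8n^{3/2}}j^2+O\!\left(n^{-2}\right).
\]
Hence \ref{GenGORZThm1assump1} holds with
\[
A(n)=\frac{A_{1/2}}{2\sqrt n}-\frac{c_2}{n},\qquad \d(n)=\sqrt{\frac{A_{1/2}}8}\,n^{-\frac34}.
\]
The $c_2j^2/(2n^2)$ term and the cubic terms are $o(1)$. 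From these formulas \ref{GenGORZThm1assump2} and \ref{genthmassump2} are immediate.

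\textbf{Step 2: the conditions on $\d$.} Since $\d(n)=c\,n^{-3/4}$, we get
\[
\log\frac{\d(n+j)}{\d(n)}=-\tfrac34\log\!\left(1+\tfrac jn\right)=-\tfrac{3}{4n}j+\tfrac{3}{8n^2}j^2+o(1).
\]
So $B(n)=-\frac{3}{4n}\to0$, and $\b(n)=\sqrt{3/8}\,n^{-1}=o(n^{-3/4})=o(\d(n))$. This verifies \ref{InflogconcaveThmassump1}--\ref{InflogconcaveThmassump3}.

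\textbf{Step 3: plug into Theorem~\ref{itlagthm1}.} Theorem~\ref{itlagthm1} then yields
\[
L_1^{[k]}(L_r(a))(n)\sim(2r-1)!!^{2^k}2^{2^{k+1}(r+1)-k-3}\d(n)^{2^{k+1}(r+1)-2}a^{2^{k+1}}(n).
\]
It remains to substitute the data and simplify.
\begin{itemize}
\item For $p$: $A_{1/2}=\pi\sqrt{2/3}$, so $\d^2=\frac{\pi}{4\sqrt6}n^{-3/2}$. The power of $n$ is $2^{k+1}+\frac34\big(2^{k+1}(r+1)-2\big)=2^{k-1}(3r+7)-\frac32$, which matches the claim. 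The constant becomes $(4\sqrt3)^{-2^{k+1}}$ times powers of $2$, $3$ and $\pi$, to be collected.
\item For $\overline{p}$: $A_{1/2}=\pi$ and $c_1=\frac18$.
\item For $\textup{spt}$: $c_2=\frac12$. This explains the exponent $2^{k-1}(3r+5)-\frac32$ and the shifted power $\pi^{2^k(r-1)-1}$, which comes from $c_1^{2^{k+1}}=(\pi\sqrt8)^{-2^{k+1}}$.
\end{itemize}
The exponential factor is $e^{2^{k+1}A_{1/2}\sqrt n}$ in each case. Positivity of the leading constants gives Conjecture~\ref{DTWConj}.

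\textbf{Main obstacle.} The only genuine care needed is the bookkeeping of the constants in Step 3, in particular simplifying powers of $2$ and $3$ such as $(\pi/(4\sqrt6))^{2^k(r+1)-1}$. The analytic input is just the leading-order asymptotics, and for $\textup{spt}$ Bringmann's leading asymptotic suffices, because \ref{GenGORZThm1assump1} only requires an $o(1)$ error in the logarithm of the ratio.
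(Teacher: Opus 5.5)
Your argument is the paper's argument. The paper proves this theorem by inserting the leading asymptotics of $p$, $\overline{p}$ and $\textup{spt}$ into Corollary~\ref{itlagthm2}, which is Theorem~\ref{itlagthm1} combined with Lemma~\ref{fact2}. Your Steps~1--2 redo Lemma~\ref{fact2} by hand for $\mathcal{S}=\{\frac12\}$, and your $A$, $\d$, $B$, $\b$ agree with the paper's to leading order. The bookkeeping in Step~3 does reproduce all three displayed formulas, including $\pi^{2^k(r-1)-1}$ and the exponent $2^{k-1}(3r+5)-\frac32$ for $\textup{spt}$. As a formal deduction from Theorem~\ref{itlagthm1}, it is complete.

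The problem is the point your last paragraph advertises, and the paper's proof has it too. A relative error $o(1)$ in $a(n+j)/a(n)$ cannot control $L^{[k]}_1(L_r(a))(n)$. That quantity is a cancellation down to relative size $\d(n)^{2^{k+1}(r+1)-2}$ compared with $a^{2^{k+1}}(n)$. The $o(1)$ error enters in your Step~1 reduction from $a$ to $f$, which is the paper's Lemma~\ref{Fact1}.

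\begin{itemize}
\item With only an $o(1)$ remainder, \ref{GenGORZThm1assump1} does not even determine $\d(n)$. The term $-\frac{A_{1/2}}{8}n^{-\frac32}j^2$ that defines your $\d$ is itself $o(1)$, just like the $\frac{c_2j^2}{2n^2}$ term you discard. So \ref{GenGORZThm1assump1}--\ref{genthmassump2} hold equally well with $\d$ replaced by $2\d$, and \ref{InflogconcaveThmassump1}--\ref{InflogconcaveThmassump3} are unaffected. Theorem~\ref{itlagthm1} would then return an answer $4^{2^k(r+1)-1}$ times larger.
\item Similarly, $a(n)=p(n)\bigl(1+(-1)^nn^{-\frac14}\bigr)$ meets every hypothesis with the same $A,\d$ as $p$. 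Yet $L_1(a)(n)\approx -4n^{-\frac14}p^2(n+1)<0$ for all large even $n$.
\end{itemize}

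So Theorem~\ref{itlagthm1}, and already Theorem~\ref{newlem1}, cannot be true as stated and cannot carry the proof.

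A repair needs two things:
\begin{itemize}
\item a remainder that is small relative to the cancellation, in the spirit of the $o(\d^m(n))$ and $A_k(n)=o(\d^k(n))$ hypotheses of Theorem~\ref{GORZThm1};
\item an induction that propagates full asymptotic expansions, not just leading terms, through $L_r$ and the iterates of $L_1$.
\end{itemize}
For $p$, $\overline{p}$ and $\textup{spt}$ such input exists. Their exact Rademacher-type formulas describe $\log a(n)$, up to errors $O(n^{-M})$ for every $M$, by a smooth function whose derivatives decay at the expected rates. So the stated asymptotics can be established along these lines. Leading-order asymptotics alone, in particular Bringmann's leading term for $\textup{spt}$, do not prove them.
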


 We next consider sequences $\{g_d(n)\}_{n\ge 0}$ ($d\in \mathbb{N}$) satisfying
\begin{equation}\label{loggenexample}
g_d(n)\sim h_1(d) n^{h_2(d)}\ \ \text{ as}\ \ n\to \infty,
\end{equation}
where $h_1(d), h_2(d)>0$. Jia \cite[Conjecture 1.8]{Jia} proposed the following conjecture.
\begin{conjecture}\label{brokenlogdiffconj}
{\it For $k, r\!\in\!\mathbb{N}$, there exists $N_k(r)\in\N$ such that 
	$
	(-1)^{r+1}\Delta^{[r]}(\log(b_k))(n)>0$ for $n\ge N_k(r)$.}
\end{conjecture}
We settle this conjecture by proving the following theorem.
\begin{theorem}\label{logdiffconj1}
	Let $r,k\in \mathbb{N}$. Then we have, as $n\to \infty$, 
	\[
	(-1)^{r+1}\Delta^{[r]}\left(\log(b_k)\right)(n)\sim 
	\frac{\pi(2r-3)!!\sqrt{5k+2}}{2^{r-\frac 12} \sqrt{3\left(2k+1\right)}}n^{\frac 12-r}.
	\]
\end{theorem}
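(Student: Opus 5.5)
The constant should come from the leading term $A\sqrt n$ of $\log b_k(n)$, where
\[
A:=\pi\sqrt{\tfrac{2(5k+2)}{3(2k+1)}}
\]
is the exponent appearing in Theorem~\ref{brokenLagthm1}. A first‑order asymptotic with an $o(1)$ error is useless here, because the $o(1)$ term does not shrink under $\Delta^{[r]}$. I will therefore first get a full asymptotic expansion of $\log b_k(n)$, then apply $\Delta^{[r]}$ termwise.

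\textbf{Step 1 (expansion of $b_k$).} Up to a power of $q$, the generating function \eqref{genfunc2} is an eta‑quotient:
\[
B_k(q)=q^{\ast}\,\frac{\eta(2\tau)\,\eta((2k+1)\tau)}{\eta(\tau)^3\,\eta(2(2k+1)\tau)}.
\]
Its dominant singularity is at $q=1$. I would apply the modular transformation there and run the circle method, or quote a Bessel‑function asymptotic for eta‑quotients (Zuckerman/Chern type). Either route gives
\[
b_k(n)=c\, n^{-\gamma}\, I_\nu\!\left(A\sqrt{n+\kappa}\right)\bigl(1+O(e^{-\varepsilon\sqrt n})\bigr)
\]
for explicit constants $c,\gamma,\nu,\kappa$. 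The key check is that the cusp at $q=1$ dominates, i.e. the other cusps give exponentially smaller growth. Inserting the standard asymptotic expansion of $I_\nu$ then yields, for every $M$,
\[
\log b_k(n)=A\sqrt n+c_0\log n+c_1+\sum_{j=1}^{M}e_j n^{-j/2}+O\!\left(n^{-\frac{M+1}{2}}\right).
\]
I expect this step to be the main obstacle. It needs uniform error control to arbitrary order in $n^{-1/2}$, plus the verification that no competing cusp contributes at the same exponential order; checking the latter is routine but tedious.

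\textbf{Step 2 (differencing).} Write $F(x):=A\sqrt x+c_0\log x+c_1+\sum_{j\le M}e_jx^{-j/2}$, a smooth function on $x>0$. By the iterated mean value theorem, $\Delta^{[r]}F(n)=F^{(r)}(\xi)$ for some $\xi\in[n-r,n]$. Since $F^{(r)}(x)=A\,\frac{d^r}{dx^r}x^{1/2}+O(x^{-r})$, this gives $\Delta^{[r]}F(n)=A\,\frac{d^r}{dx^r}x^{1/2}\big|_{x=n}\,(1+O(n^{-1/2}))$. The remainder $O(n^{-(M+1)/2})$ stays $O(n^{-(M+1)/2})$ after applying $\Delta^{[r]}$, because $\Delta^{[r]}$ is a fixed finite linear combination of shifts. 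Choosing $M\ge 2r$ makes this remainder $o(n^{1/2-r})$.

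\textbf{Step 3 (constant).} We have
\[
\frac{d^r}{dx^r}x^{1/2}=\prod_{i=0}^{r-1}\left(\tfrac12-i\right)x^{\frac12-r}=(-1)^{r-1}\frac{(2r-3)!!}{2^r}\,x^{\frac12-r}.
\]
Hence
\[
(-1)^{r+1}\Delta^{[r]}\bigl(\log(b_k)\bigr)(n)\sim \frac{A\,(2r-3)!!}{2^r}\,n^{\frac12-r}.
\]
Substituting $A$, the prefactor becomes $\frac{A}{2^r}=\frac{\pi\sqrt{5k+2}}{2^{r-\frac12}\sqrt{3(2k+1)}}$, which is exactly the constant in Theorem~\ref{logdiffconj1}. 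This asymptotic is positive, so Conjecture~\ref{brokenlogdiffconj} follows.
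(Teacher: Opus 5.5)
Your argument is correct, and it takes a different route from the paper's. The paper proves the theorem in one line from Lemma~\ref{brokenlem1} and Lemma~\ref{logdifflem2}. The Tauberian asymptotic for $b_k(n)$ (Proposition~\ref{sec0prop1}) is used only to get $\log b_k(n)\sim A\sqrt n$. Lemma~\ref{logdifflem1} then applies Theorem~\ref{newsec4lem1} to $g=\log b_k$ with $A(n)=\frac{1}{2n}$ and $\delta(n)=\frac{1}{2n}$, iterated in $r$, to turn $g(n)\sim h_1n^{h_2}$ into the asymptotic for $(-1)^{r+1}\Delta^{[r]}g(n)$.

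That route is short, covers every sequence of shape \eqref{genexample}, and needs no modularity. But it rests on exactly the step your opening remark rules out. Neither $g(n)\sim h_1n^{h_2}$ nor condition \ref{GenGORZThm1assump1} with an $o(1)$ error controls differences:
\begin{itemize}
\item condition \ref{GenGORZThm1assump1} with an $o(1)$ error holds for \emph{every} choice of $A(n),\delta(n)\to0$ once $g(n+j)/g(n)\to1$;
\item $g(n)=\sqrt n+(-1)^nn^{1/4}$ satisfies $g(n)\sim\sqrt n$, yet $\Delta^{[1]}g(n)$ has size about $2n^{1/4}$ and alternates in sign.
\end{itemize}
Already for $r=1$ one needs $b_k(n)/b_k(n-1)=1+\frac{A}{2\sqrt n}+o(n^{-1/2})$, which a first-order asymptotic cannot provide. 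Your route pays for this with genuinely stronger input: the eta-quotient structure and a circle-method expansion $\log b_k(n)=F(n)+o(n^{1/2-r})$ with $F$ smooth. For this, $M\ge 2r-1$ already suffices. After that, Steps 2 and 3 are airtight and your constant matches.

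The cusp check you defer does go through. Near $q=e^{2\pi ih/c}$, the singular part of $\log B_k(e^{2\pi ih/c}e^{-z})$ is $\gamma_c/z$, where
\[
\gamma_c=\frac{\pi^2}{6c^2}\Bigl(3-\frac{g_2^2}{2}-\frac{g_o^2}{2k+1}+\frac{g_2^2g_o^2}{2(2k+1)}\Bigr),\qquad g_2=\gcd(c,2),\quad g_o=\gcd(c,2k+1).
\]
At $c=1$ this gives $\gamma_1=\frac{\pi^2(5k+2)}{6(2k+1)}=\frac{A^2}{4}\ge\frac{7\pi^2}{18}$. For $c\ge2$ one has $\gamma_c\le\frac{\pi^2}{18}$, so the cusp at $q=1$ dominates by a factor $e^{-\varepsilon\sqrt n}$.

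One small slip: $A$ is the exponent in Lemma~\ref{brokenlem1}, not in Theorem~\ref{brokenLagthm1}, where the exponent is $2A$.
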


The paper is organized as follows. In \Cref{sec0}, we recall some basic facts. We prove \Cref{GenGORZcor1} and \Cref{ConjThm1} in \Cref{sec:GenJensen}, Theorems \ref{genthm}\textendash\ref{Lagineq} in \Cref{secgen}, and Theorems \ref{InflogconcaveThm}\textendash\ref{logdiffconj1} in \Cref{newsec2}. Finally in \Cref{concluding}, we discuss problems for future research.

\section*{Acknowledgements}
The first author has received funding from the Deutsche Forschungsgemeinschaft (DFG) entitled ``Eine allgemeiner Ansatz für die Hyperbolizität von Jensen-Polynomen und kombinatorische Ungleichungen" (grant No. BA 9185/1-1, 574654157). The second author has received funding from the European Research Council (ERC) under the European Unions Horizon 2020 research and innovation programme (grant agreement No. 101001179). The third author was supported by a grant from the Simons Foundation (853830, LR). The authors thank Don Zagier for his insightful remarks.

\section{Preliminaries}\label{sec0}

\subsection{A Tauberian Theorem} We adopt the setup from \cite[Theorem 1.1]{BJM}. 
For $\Delta\ge 0$, define 
\[
R_{\Delta}:=\{x+iy:x,y\in \mathbb{R}, x>0, |y|\le \Delta x\}.
\]
\begin{proposition}\label{sec0prop1}
	Let $B(q)=\sum_{n\ge 0}b(n)q^n$ have radius of convergence at least one with $\{b(n)\}_{n\ge 0}$ non-negative and weakly increasing. Assume $\l,\b,\g\in \mathbb{R}$ with $\g>0$ exists with
	\begin{equation}\label{sec0prop1eqn1}
	B\left(e^{-t}\right)\sim \l t^{\b}e^{\frac \g t}\ \text{ as}\ t\to 0^{+},\quad B\left(e^{-z}\right)\ll |z|^\b e^{\frac{\g}{|z|}}\ \ \text{ as}\ \ z\to 0,
	\end{equation}
	with the later condition holding in each region $R_{\Delta}$ for $\Delta\ge 0$. Then we have
	\[
	b(n)\sim \frac{\l \g^{\frac \b2+\frac 14}}{2\sqrt{\pi}n^{\frac \b2+\frac 34}}e^{2\sqrt{\g n}}\ \text{ as}\ n\to \infty.
	\]
\end{proposition}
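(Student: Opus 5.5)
The plan follows Ingham's Tauberian method, in the normalization of \cite[Theorem 1.1]{BJM}. Fix $n$ and take the saddle point $t=t_n:=\sqrt{\gamma/n}$, which minimises $\frac{\gamma}{t}+nt$; at this point $e^{\frac{\gamma}{t}+nt}=e^{2\sqrt{\gamma n}}$. Writing $z=t_n+iy$ gives
\[
b(n)=\frac{1}{2\pi}\int_{-\pi}^{\pi}B\!\left(e^{-z}\right)e^{nz}\,dy .
\]
We do not use this identity directly. Because only the \emph{real} asymptotic in \eqref{sec0prop1eqn1} is assumed, we first smooth. Let $K$ be a fixed nonnegative kernel whose Fourier transform $\widehat K$ is supported in $[-1,1]$, and consider the smoothed sum
\[
\Sigma_n(\eta):=\sum_{m} b(m)e^{-(m-n)t_n}\,K\!\left(\frac{m-n}{\eta}\right),
\]
with window width $\eta$. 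By Fourier inversion, $\Sigma_n(\eta)$ is an integral of $B(e^{-t_n-iy})e^{n(t_n+iy)}$ over $|y|\le \eta^{-1}$. If $\eta$ is chosen as a small multiple of the natural scale $t_n^{-1}n^{1/2}\asymp n^{3/4}$, the condition $|y|\le\eta^{-1}$ places $z$ inside a cone $R_\Delta$. The cone condition in \eqref{sec0prop1eqn1} is exactly what this region requires.

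The key step is to upgrade the real asymptotic to an asymptotic on the short arcs $|y|\le C t_n^{3/2}$; this is the main obstacle. Put
\[
f_t(w):=\frac{B\!\left(e^{-tw}\right)}{\l (tw)^{\b}e^{\frac{\g}{tw}}}
\]
for $w$ in a small neighbourhood of $1$ inside the cone. The bound in \eqref{sec0prop1eqn1} gives
\[
|f_t(w)|\ll \exp\!\left(\frac{\g}{t}\left(\frac{1}{|w|}-\operatorname{Re}\frac1w\right)\right).
\]
On the relevant region $w=1+iu$ with $|u|\ll t^{1/2}$, this exponent is $O(1)$, so the family $\{f_t\}$ is uniformly bounded there. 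We also know $f_t(w)\to1$ for real $w$. I would then apply a Vitali/Montel-type argument, or a Phragmén--Lindelöf estimate on the rescaled family, to conclude $f_t(w)\to1$ uniformly on the shrinking arcs. If this step breaks, the fallback is to first run the argument with the bound only, obtaining $b(n)$ up to constants, and then recover the constant $\l$ from the real asymptotic by the standard Hardy--Littlewood/Karamata comparison.

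With this asymptotic in hand, expand
\[
\frac{\g}{t_n+iy}+n(t_n+iy)=2\sqrt{\g n}-\frac{\g y^2}{t_n^{3}}+O\!\left(\frac{|y|^3}{t_n^{4}}\right),
\]
and evaluate the resulting Gaussian integral. The Gaussian width is $t_n^{3/2}\g^{-1/2}$, which contributes the factor $\frac{1}{2\sqrt\pi}\,\g^{-1/2}t_n^{3/2}$. Combined with $\l t_n^{\b}$ this yields precisely
\[
\frac{\l\,\g^{\frac\b2+\frac14}}{2\sqrt\pi\,n^{\frac\b2+\frac34}}\,e^{2\sqrt{\g n}}
\]
for the smoothed quantity, up to a factor $1+o(1)$ and the normalisation of $K$.

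Finally, we remove the smoothing using monotonicity. Since $b$ is weakly increasing, $b(n)$ is squeezed between smoothed averages over windows $[n-\eta,n]$ and $[n,n+\eta]$, with $\eta=\varepsilon n^{3/4}$. On such windows, the main term $e^{2\sqrt{\g m}}m^{-\b/2-3/4}$ varies only by a factor $e^{O(\varepsilon)}$, since $\frac{d}{dm}2\sqrt{\g m}=\sqrt{\g/m}$ and $\eta\sqrt{\g/n}=O(\varepsilon n^{1/4})$. This last estimate is too large as written, so the tilt factor $e^{-(m-n)t_n}$ must be kept: it exactly cancels the linear growth, leaving a quadratic variation of size $O(\varepsilon^2)$. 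Letting $n\to\infty$ and then $\varepsilon\to0$ gives the asserted asymptotic for $b(n)$.
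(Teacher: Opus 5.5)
The paper gives no proof of this proposition; it is imported from \cite[Theorem 1.1]{BJM}. So your argument has to stand on its own, and it breaks at the final unsmoothing step.

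The step you flag as the main obstacle is actually fine. Put $z=t+t^{3/2}\zeta$ with $|\zeta|\le R$. Then $\gamma(|z|^{-1}-\mathrm{Re}\,z^{-1})=\frac{\gamma}{2}(\mathrm{Im}\,\zeta)^2+o(1)$, so $B(e^{-z})/(\lambda z^{\beta}e^{\gamma/z})$ is uniformly bounded on the disc and tends to $1$ on $[-R,R]$. Vitali then gives the asymptotic uniformly for $|y|\le Ct^{3/2}$.

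\textbf{The gap.} The problem is the window $\eta=\varepsilon n^{3/4}$. Since $\eta t_n\asymp\varepsilon n^{1/4}\to\infty$, $b$ grows by a factor $e^{\asymp\varepsilon n^{1/4}}$ across such a window. Monotonicity is a property of $b$, not of the tilted sequence $b(m)e^{-(m-n)t_n}$. Inserting $b(m)\ge b(n)$ for $m\ge n$ into the tilted window sum only gives $b(n)\le(1+o(1))\,t_n\sum_{n\le m\le n+\eta}b(m)e^{-(m-n)t_n}$, which is $\asymp\varepsilon n^{1/4}$ times the expected size. The lower bound loses a factor $e^{-\eta t_n}$. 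Keeping the tilt flattens the main term, but it gives no way to compare $b(n)$ with its neighbours.

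This cannot be repaired at scale $n^{3/4}$. Your argument only uses $B(e^{-z})$ for $|y|\lesssim\varepsilon^{-1}t^{3/2}$, and that information does not determine $b(n)$. For example, let $b_0(m)$ be the claimed main term. Then $b_0(m)\bigl(1+\frac12\sin(2\pi m^{3/8})\bigr)$ is eventually increasing, and its generating function agrees with that of $b_0$ up to negligible relative error for $|y|\le Ct^{3/2}$. The reason is that its oscillation lives at frequency $\asymp t^{5/4}$, strictly between $t^{3/2}$ and $t$. Yet it is not $\sim b_0(m)$; it is excluded only by the cone bound at angles of order one, which never enters your version.

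\textbf{The missing idea.} Work with windows of length $\theta\sqrt n$. On these, $e^{\pm\theta\sqrt n\,t_n}=1+O(\theta)$, so monotonicity genuinely squeezes $b(n)$.
Realize the sharp window sums through Beurling--Selberg majorants and minorants (the minorant side uses $b\ge0$). Take them with Fourier support $|y|\le\Delta t_n$, where $\Delta\asymp A/\theta$, and $L^1$-excess $O(1/A)$.
Then split the Fourier integral into two ranges:
\begin{enumerate}
\item On $|y|\le Ct_n^{3/2}$, use your Vitali asymptotic.
\item On $Ct_n^{3/2}\le|y|\le\Delta t_n$, use the hypothesis in $R_\Delta$. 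It gives $|B(e^{-z})e^{nz}|\ll t_n^{\beta}e^{2\sqrt{\gamma n}}e^{-c_\Delta\gamma y^2/t_n^{3}}$, so this range contributes $O(e^{-c_\Delta C^2})$ relative to the main term.
\end{enumerate}
Let $n\to\infty$, then $C,A\to\infty$, then $\theta\to0$ to obtain the claim.

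Two minor points. First, $t_n^{-1}n^{1/2}\asymp n$; the scale you mean is $t_n^{-3/2}\asymp n^{3/4}$. Second, a Karamata-type fallback does not apply at growth $e^{2\sqrt{\gamma n}}$, and it would not remove the unsmoothing problem anyway.
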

 We need the following well-known lemma that is not hard to prove.
\begin{lemma}\label{elem}
Let $\{a(n)\}_{n\ge 0}$ satisfy $a(0)=1$ and set $A(q):=\sum_{n\ge 0}a(n)q^n$. If $(1-q)A(q)$ has non-negative Fourier coefficients, then $\{a(n)\}_{n\ge 0}$ is weakly increasing.
\end{lemma}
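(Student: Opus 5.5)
We prove the statement by telescoping, writing $a(n)$ in terms of the coefficients of $(1-q)A(q)$. Set
\[
(1-q)A(q)=\sum_{n\ge 0}c(n)q^n ,
\]
where we treat $A(q)$ as a formal power series. If a radius of convergence is at hand, the identity holds analytically as well, but only coefficient comparison is needed.

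Expanding the product gives $c(0)=a(0)=1$ and $c(n)=a(n)-a(n-1)$ for $n\ge 1$. The hypothesis that $(1-q)A(q)$ has non-negative Fourier coefficients means exactly that $c(n)\ge 0$ for all $n\ge 0$.

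For $n\ge 1$ this gives $a(n)-a(n-1)=c(n)\ge 0$, so $a(n)\ge a(n-1)$, and the sequence is weakly increasing. Equivalently, summing the telescoping differences gives $a(n)=\sum_{k=0}^n c(k)$, a partial sum of non-negative terms, which is monotone in $n$.

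The normalization $a(0)=1$ ensures $a(0)=c(0)\ge 0$, so every term is non-negative and the sequence starts from a positive value, as required for the application to Proposition~\ref{sec0prop1}. There is no genuine obstacle; the only point needing care is identifying the coefficients of $(1-q)A(q)$ correctly, including the index-$0$ term.
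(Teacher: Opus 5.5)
Your argument is correct: comparing coefficients gives $c(0)=a(0)$ and $c(n)=a(n)-a(n-1)$ for $n\ge 1$, so non-negativity of the coefficients of $(1-q)A(q)$ is exactly weak monotonicity of $\{a(n)\}_{n\ge 0}$. The paper gives no proof, calling the lemma well known and not hard to prove, and your coefficient comparison is the intended elementary argument; the hypothesis $a(0)=1$ is not needed for monotonicity and only matters for the later use of Proposition~\ref{sec0prop1}.
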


Moreover we require the asymptotics of $(q;q)_{\infty}$. 
\begin{lemma}\label{sec0lem1}
	Let $q=e^{-z}$ and $\Delta\ge 0$. Then, as $z\to 0$ in $R_{\Delta}$, we have
	\[
	(q;q)_{\infty}\sim \sqrt{\frac{2\pi}{z}}e^{-\frac{\pi^2}{6z}}.
	\]
\end{lemma}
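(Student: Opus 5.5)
We prove Lemma~\ref{sec0lem1} from the modular transformation law of the Dedekind eta function. Write $q=e^{-z}$ with $\operatorname{Re}(z)>0$, and set $\tau:=\frac{iz}{2\pi}$, so that $q=e^{2\pi i\tau}$ and $\tau\in\mathbb{H}$. Recall $\eta(\tau)=q^{\frac1{24}}(q;q)_\infty$ and
\[
\eta\left(-\frac1\tau\right)=\sqrt{-i\tau}\,\eta(\tau),
\]
with the principal branch of the square root. Since $-i\tau=\frac{z}{2\pi}$, this gives
\[
(q;q)_\infty=e^{\frac{z}{24}}\sqrt{\frac{2\pi}{z}}\;e^{-\frac{\pi^2}{6z}}\left(e^{-\frac{4\pi^2}{z}};e^{-\frac{4\pi^2}{z}}\right)_\infty .
\]
The exponent comes from the factor $e^{\frac{2\pi i}{24}\cdot(-\frac1\tau)}$, because $-\frac1\tau=\frac{2\pi i}{z}$ and hence $e^{\frac{2\pi i}{24}\cdot\frac{2\pi i}{z}}=e^{-\frac{\pi^2}{6z}}$.

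It remains to show that the two extra factors tend to $1$ as $z\to0$ in $R_\Delta$. Clearly $e^{\frac z{24}}\to1$. For the other factor, put $w:=\frac{4\pi^2}{z}$. Then
\[
\operatorname{Re}(w)=\frac{4\pi^2 x}{|z|^2}\ge \frac{4\pi^2}{(1+\Delta^2)x}\to\infty,
\]
since $|y|\le\Delta x$ gives $|z|^2\le(1+\Delta^2)x^2$. Consequently $|e^{-w}|\to0$. Using $|\log(1-u)|\le 2|u|$ for $|u|\le\frac12$, we obtain
\[
\bigl|\log (e^{-w};e^{-w})_\infty\bigr|\le 2\sum_{k\ge1}|e^{-w}|^k\to0,
\]
so this factor tends to $1$, which proves the lemma.

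The only point requiring care is the uniformity in the sector. This is exactly where the restriction to $R_\Delta$ enters: it guarantees $\operatorname{Re}(1/z)\to\infty$. One also has to check that the branch of $\sqrt{z}$ is consistent with the principal branch in the eta transformation, which holds because $z$ stays in the right half-plane.
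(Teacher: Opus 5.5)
Your proposal is correct: the eta transformation gives the exact identity $(q;q)_\infty=e^{\frac{z}{24}}\sqrt{\frac{2\pi}{z}}\,e^{-\frac{\pi^2}{6z}}\bigl(e^{-\frac{4\pi^2}{z}};e^{-\frac{4\pi^2}{z}}\bigr)_\infty$, and the sector condition $|y|\le\Delta x$ forces $\operatorname{Re}(1/z)\to\infty$ uniformly, so the last factor tends to $1$. The paper states this lemma without proof as a well-known fact, and your argument is exactly the standard justification behind it.
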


\subsection{Hermite and Jensen polynomials} The {\it Hermite polynomial} of degree $m$ is 
\begin{equation}\label{Hermitedef}
H_m(x):=m!\sum_{r=0}^{\left\lfloor\frac m2\right\rfloor}\frac{(-1)^r}{r!\left(m-2r\right)!}(2x)^{m-2r}.
\end{equation}
The main theorem of Griffin, Ono, Zagier, and the third author is the following. 
\begin{theorem}\cite[Theorem 3]{GORZ}\label{GORZThm1}
	Let $\{a(n)\}_{n\ge 0}, \{A(n)\}_{n\ge 0}, \{\d(n)\}_{n\ge 0}$ be sequences of positive numbers with $\lim_{n\to \infty}\d(n)=0$. Assume that for some $m\in \mathbb{N}$ and all $0\le j\le m$, we have
	\begin{equation*}
	\log\left(\frac{a(n+j)}{a(n)}\right)=A(n)j-\d^2(n)j^2+\sum_{k=3}^{m}A_k(n)j^k+o\left(\d^{m}(n)\right)\ \ \text{ as}\ \ n\to \infty,
	\end{equation*}
 and $A_k(n)=o(\d^k(n))$ as $n\to \infty$. Then we have, locally uniformly in $x$,
	\[
	\lim_{n\to \infty}\frac{J^{m,n}_{a}\left(\frac{\d(n)x-1}{e^{A(n)}}\right)}{a(n)\d^{m}(n)}=H_m\left(\frac x2\right).
	\]
\end{theorem}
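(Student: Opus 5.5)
The strategy is to substitute $x=\frac{\d(n)X-1}{e^{A(n)}}$ directly into $J^{m,n}_a$, factor out $a(n)$, and expand in powers of $\d(n)X$. Each coefficient then turns out to be a finite difference of $j\mapsto a(n+j)e^{-A(n)j}/a(n)$. By hypothesis this function is $\exp(-\d^2(n)j^2+\sum_{k=3}^m A_k(n)j^k+o(\d^m(n)))$, so each coefficient is, to leading order, a finite difference of a Gaussian-type polynomial in $j$.

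\textbf{Step 1 (rewriting as finite differences).} Write $\d=\d(n)$ and $A=A(n)$. Since
\[
\frac{J^{m,n}_a(x)}{a(n)}=\sum_{j=0}^m\binom{m}{j}\frac{a(n+j)}{a(n)}e^{-Aj}(\d X-1)^j,
\]
I expand $(\d X-1)^j=\sum_i\binom{j}{i}(\d X)^i(-1)^{j-i}$. Using $\binom{m}{j}\binom{j}{i}=\binom{m}{i}\binom{m-i}{j-i}$, the coefficient of $(\d X)^i$ becomes $\binom{m}{i}(-1)^{m-i}\nabla^{m-i}f_n(i)$. Here $\nabla^d g(i):=\sum_{l=0}^d\binom{d}{l}(-1)^{d-l}g(i+l)$ is the forward difference of order $d$, and
\[
f_n(j):=\exp\Big(-\d^2j^2+\sum_{k=3}^m A_k(n)j^k+o(\d^m)\Big),\qquad 0\le j\le m.
\]
It therefore suffices to show, for $d:=m-i$, that $\nabla^d f_n(i)=c_d\,\d^d+o(\d^d)$ with $c_d=(-1)^{d/2}\frac{d!}{(d/2)!}$ for $d$ even and $c_d=0$ for $d$ odd. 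Multiplying by $\d^i X^i$ and dividing by $\d^m$ then yields the limit.

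\textbf{Step 2 (Taylor expansion and bookkeeping).} Since $j$ ranges over the bounded set $\{0,\dots,m\}$ and $\d\to 0$, $A_k(n)=o(\d^k)$, the exponent is $o(1)$. I expand $f_n(j)=\sum_{s\le m}\frac{E(j)^s}{s!}+o(\d^m)$, where $E(j)=-\d^2j^2+\sum_{k\ge3}A_k j^k$. Each monomial in this expansion has the form $\prod(\text{coefficient})\,j^{e}$, whose coefficient is $O(\d^{e})$. It is $o(\d^{e})$ as soon as some factor $A_k$ with $k\ge3$ occurs. The key fact is that $\nabla^d$ annihilates every polynomial in $j$ of degree $<d$. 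Hence only monomials with $e\ge d$ survive, and these are $O(\d^e)=o(\d^d)$ unless $e=d$ and the monomial is the pure term $\frac{(-\d^2 j^2)^{d/2}}{(d/2)!}$, which requires $d$ even. For that term, $\nabla^d j^d=d!$, which gives exactly $c_d\d^d$. The $o(\d^m)$ remainder contributes $o(\d^m)\subseteq o(\d^d)$, because $\nabla^d$ is a fixed finite linear combination of values.

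\textbf{Step 3 (identification).} With $d=2r$ and $i=m-2r$, the coefficient of $X^{m-2r}$ in the limit is
\[
\binom{m}{m-2r}(-1)^r\frac{(2r)!}{r!}=\frac{m!(-1)^r}{r!(m-2r)!}.
\]
This is the coefficient of $X^{m-2r}=(2\cdot\frac X2)^{m-2r}$ in $H_m(\frac X2)$ by \eqref{Hermitedef}. Odd $d$ give $0$. Since the convergence holds coefficientwise for a polynomial of fixed degree $m$, it is locally uniform in $X$.

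The main obstacle is the error bookkeeping in Step 2. One must check that every cross term involving some $A_k$ ($k\ge3$), or any higher power of $\d^2j^2$, is either killed by $\nabla^d$ because its degree is too low, or is $o(\d^d)$. This is the weighting argument above, assigning weight $k$ to $A_k$ and weight $2$ to $\d^2$. The argument also needs uniformity of the $o(\cdot)$ terms over the finitely many $j$, which holds trivially since $j$ takes only finitely many values.
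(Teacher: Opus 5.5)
Your proof is correct. It has the same skeleton as the argument the paper gives for its variant \Cref{GenGORZThm1}; the statement itself is only quoted from \cite{GORZ} and not reproved. That skeleton is: substitute $x=(\delta(n)X-1)e^{-A(n)}$, expand in powers of $\delta(n)X$, and use Lemma~\ref{useful}, which is exactly your observation that a $d$-th difference annihilates polynomials of degree $<d$. Then only the pure Gaussian term $(-\delta^2(n)j^2)^{d/2}/(d/2)!$ survives at order $\delta^d(n)$, and it produces the coefficients of $H_m(\frac X2)$.

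Where you differ, usefully, is in the error analysis. You control the $A_k(n)$ by assigning them weight $k$ (and weight $2$ to $\delta^2(n)$). You also carry the $j$-dependent remainder $o(\delta^m(n))$ additively through the fixed linear functional $\nabla^{d}$. This is exactly where the hypothesis $o(\delta^m(n))$ is needed: the alternating sum over $j$ has size $\delta^m(n)$, while its individual terms have size $1$. One small precision: truncating $e^{E(j)}$ at $s\le m$ needs $E(j)=O(\delta^2(n))$, not merely $E(j)=o(1)$, but your weight count supplies this.

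The paper's proof of \Cref{GenGORZThm1} instead pulls a factor $1+o(1)$ out of that alternating sum. This is valid only for a factor independent of $j$. Under the weaker hypothesis \ref{GenGORZThm1assump1}, with error only $o(1)$, it genuinely breaks down. For example, $a(n)=1+(-1)^n/\log(n+3)$ satisfies \ref{GenGORZThm1assump1} for $0\le j\le 2$ and \ref{GenGORZThm1assump2} with $A(n)=\delta(n)=\frac{1}{n+1}$. Yet $J^{2,n}_a$ fails to be hyperbolic for every even $n$. So your careful bookkeeping is not cosmetic: it is what makes the argument valid for the statement at hand.
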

\Cref{GORZThm1} implies the following.
\begin{corollary}\cite[Corollary 4]{GORZ}\label{GORZcor1}
	The Jensen polynomials $J^{m,n}_{a}$ for a sequence $\{a(n)\}_{n\ge 0}$ satisfying the conditions of \Cref{GORZThm1} are hyperbolic for $n\!\gg\!1$.
\end{corollary}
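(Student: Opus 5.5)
The plan is to transfer real-rootedness from the Hermite limit in \Cref{GORZThm1} back to $J^{m,n}_a$ by a sign-change (intermediate value) argument, which avoids any appeal to complex-analytic root continuity. For $n\gg 1$ set
\[
P_n(x):=\frac{J^{m,n}_{a}\left(\frac{\d(n)x-1}{e^{A(n)}}\right)}{a(n)\d^{m}(n)}.
\]
Since $a(n)>0$ and $\d(n)>0$, this is a real polynomial in $x$. Its degree is exactly $m$, because the leading coefficient $a(n+m)\d^m(n)e^{-mA(n)}/(a(n)\d^m(n))$ is positive. By \Cref{GORZThm1}, $P_n(x)\to H_m(\frac x2)$ locally uniformly in $x$.

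First I would record that $H_m(\frac x2)$ has $m$ distinct real roots $x_1<\dots<x_m$. This is the classical fact that the Hermite polynomials are orthogonal with respect to $e^{-x^2}$ on $\R$; alternatively, $H_m'=2mH_{m-1}$ together with the three-term recurrence gives interlacing by induction. Because the roots are simple, $H_m(\frac x2)$ changes sign at each of them. So I can choose real points $y_0<x_1<y_1<x_2<\dots<x_m<y_m$ at which $H_m(\frac{y_i}{2})\neq 0$ and whose signs alternate in $i$.

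Next, pointwise convergence at the finitely many points $y_0,\dots,y_m$ shows that, for all $n$ beyond some threshold, $P_n(y_i)$ has the same sign as $H_m(\frac{y_i}2)$ for every $i$. By the intermediate value theorem, $P_n$ then has a root in each interval $(y_{i-1},y_i)$, so it has at least $m$ distinct real roots. Since $\deg P_n=m$, these are all of its roots, and $P_n$ is hyperbolic.

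Finally, the roots of $J^{m,n}_a$ are exactly the images $\frac{\d(n)x-1}{e^{A(n)}}$ of the roots $x$ of $P_n$. This is an invertible real affine map, because $\d(n)>0$ and $e^{A(n)}>0$, so all $m$ roots of $J^{m,n}_a$ are real, i.e., $J^{m,n}_a$ is hyperbolic for $n\gg1$. The only step needing genuine input is the simplicity and reality of the Hermite roots, which is standard; everything else is routine. The point needing care is that the degree of $P_n$ does not drop, and positivity of $a(n+m)$ guarantees this.
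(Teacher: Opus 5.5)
Your proof is correct and takes the same route as the paper's argument for the analogous Theorem \ref{GenGORZcor1} (which mirrors \cite{GORZ}): the renormalized Jensen polynomials converge to $H_m(\frac x2)$, which has $m$ simple real zeros, so hyperbolicity passes back to $J^{m,n}_a$ for $n\gg1$ through the invertible real affine change of variables. Your intermediate-value argument, together with the check that the degree stays exactly $m$, is simply an explicit proof of the ``not hard to see'' root-stability fact that the paper invokes, and it shows that pointwise convergence at finitely many points already suffices.
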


 For a polynomial $\mathcal{P}(x)\in \mathbb{R}[x]$, define the {\it discriminant} $\operatorname{\mathbb{D}}(\mathcal{P})\!\!:=\!a^{2m-2}_m\prod_{j< k}(\a_j-\a_k)^2$, where $m$ is the degree of $\mathcal{P}$, $a_m$ is the leading coefficient of $\mathcal{P}$, and $\{\a_k\}_{1\le k\le m}$ are the roots of $\mathcal{P}$. By \cite[Section 6]{Chen1}, $\{a(n)\}_{n\ge 0}$ is log-concave iff $\operatorname{\mathbb{D}}(J^{2,n}_a)\ge 0$ and $J^{3,n}_a$ is hyperbolic iff $\operatorname{\mathbb{D}}(J^{3,n}_a)\ge 0$. Moreover, by \cite[equation (1.2)]{Chen2},  $\{a(n)\}_{n\ge 0}$ satisfies \eqref{HigherOrderTuranDef} iff $\operatorname{\mathbb{D}}(J^{3,{n-1}}_a)\ge 0$.

We require the following elementary identity (see \cite[26.8.4 and 26.8.6]{DLMF}).
\begin{lemma}\label{useful}
For $n,t\in\mathbb{N}$, we have 
\begin{equation*}
\sum_{k=0}^{n}(-1)^k\binom{n}{k}k^t=\begin{cases}
0&\ \text{if}\ \ t<n,\\
(-1)^n n!&\ \text{if}\ \ t=n.
\end{cases}
\end{equation*}
\end{lemma}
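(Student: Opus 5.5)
The statement is Lemma \ref{useful}; we prove it with finite differences. Let $\Delta$ be the forward difference operator $(\Delta f)(x):=f(x+1)-f(x)$ acting on polynomials. Then $(\Delta^n f)(0)=\sum_{k=0}^n(-1)^{n-k}\binom{n}{k}f(k)$. Applying this to $f(x)=x^t$ gives
\[
\sum_{k=0}^{n}(-1)^k\binom{n}{k}k^t=(-1)^n(\Delta^n x^t)\big|_{x=0}.
\]
So the whole lemma reduces to computing $\Delta^n$ on monomials of degree $t\le n$.

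The key step is the observation that $\Delta$ lowers degree by exactly one and multiplies the leading coefficient by the degree. Indeed,
\[
(x+1)^s-x^s=sx^{s-1}+(\text{terms of lower degree}),
\]
and constants are killed by $\Delta$. By induction on $n$, $\Delta^n x^t$ is a polynomial of degree $t-n$ when $t\ge n$, and is identically zero when $t<n$.

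The two cases then follow directly.
\begin{itemize}
\item If $t<n$, then $\Delta^n x^t\equiv 0$, so the sum vanishes.
\item If $t=n$, then $\Delta^n x^n$ is the constant obtained by multiplying leading coefficients, namely $n(n-1)\cdots 1=n!$. Hence the sum equals $(-1)^n n!$.
\end{itemize}

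An alternative check is the Stirling-number identity $\sum_{k}(-1)^{n-k}\binom nk k^t=n!\,S(t,n)$, together with $S(t,n)=0$ for $t<n$ and $S(n,n)=1$. This is exactly the reference \cite[26.8.4, 26.8.6]{DLMF}.

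There is no real obstacle here. The only point needing care is the sign bookkeeping between $(-1)^k$ and $(-1)^{n-k}$.
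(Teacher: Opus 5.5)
Your proof is correct. It takes a different route from the paper only in being self-contained: the paper gives no argument and simply cites DLMF 26.8, i.e.\ the identity $\sum_{k}(-1)^{n-k}\binom{n}{k}k^t=n!\,S(t,n)$ together with $S(t,n)=0$ for $t<n$ and $S(n,n)=1$. That is exactly the route you list as your ``alternative check.'' Your forward-difference argument proves those same facts directly; it is essentially the standard proof of that explicit formula. So the contrast is citation versus derivation.

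Your route does buy a slightly stronger statement that fits how the lemma is actually used. Since $\Delta$ lowers the degree by one and multiplies the leading coefficient by the degree, you get, for every polynomial $P$,
\[
\sum_{k=0}^n(-1)^k\binom{n}{k}P(k)=
\begin{cases}
0 & \text{if } \deg P<n,\\
(-1)^n n!\cdot(\text{leading coefficient of } P) & \text{if } \deg P=n.
\end{cases}
\]
The paper reaches this only after binomially expanding and applying the lemma monomial by monomial. This happens in the Hermite identity in the proof of \Cref{GenGORZThm1} and in evaluating $D_t$ in \eqref{obs1}.

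Your argument also covers the exponent $t=0$ (with $0^0=1$). The lemma as stated for $t\in\mathbb{N}$ formally excludes this case, but those same applications need it.
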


\section{Proof of Theorems \ref{GenGORZcor1} and \ref{ConjThm1}}\label{sec:GenJensen}

 In the following theorem, we improve \Cref{GORZThm1}. While our proof shares some similarities with \cite{GORZ}, we expand and combine terms differently. 

\begin{theorem}\label{GenGORZThm1}
	Let $\{a(n)\}_{n\ge 0}$ satisfy {\rm{\ref{GenGORZThm1assump1}}} with $0\le j\le m$ and {\rm{\ref{GenGORZThm1assump2}}}. Then, locally uniformly in $x$, we have
\[
\lim_{n\to \infty}\frac{J^{m,n}_{a}\left(\frac{\d(n)x-1}{e^{A(n)}}\right)}{a(n)\d^{m}(n)}=H_m\left(\frac x2\right).
\]%
\end{theorem}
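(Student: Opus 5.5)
The approach is to expand the renormalized Jensen polynomial coefficient by coefficient in $x$ and compare each coefficient with the corresponding coefficient of $H_m(\frac x2)$. Since both sides are polynomials of fixed degree $m$, coefficientwise convergence gives locally uniform convergence in $x$.

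\textbf{Step 1 (expansion).} Substituting $x\mapsto \frac{\d(n)x-1}{e^{A(n)}}$ into \eqref{Jensendef} and dividing by $a(n)$, assumption \ref{GenGORZThm1assump1} gives $\frac{a(n+j)}{a(n)}e^{-A(n)j}=e^{-\d^2(n)j^2}(1+\varepsilon_j(n))$ with $\varepsilon_j(n)\to 0$. Hence
\[
\frac{J^{m,n}_a\left(\frac{\d(n)x-1}{e^{A(n)}}\right)}{a(n)}=\sum_{j=0}^m\binom{m}{j}e^{-\d^2(n)j^2}\left(1+\varepsilon_j(n)\right)\left(\d(n)x-1\right)^j .
\]
Expand $(\d x-1)^j$ binomially and use $\binom{m}{j}\binom{j}{k}=\binom{m}{k}\binom{m-k}{j-k}$. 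The coefficient of $x^k$ then becomes
\[
\binom{m}{k}\d^k(n)\sum_{i=0}^{m-k}\binom{m-k}{i}(-1)^{m-k-i}\,e^{-\d^2(n)(i+k)^2}\left(1+\varepsilon_{i+k}(n)\right).
\]

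\textbf{Step 2 (main term).} Ignore the $\varepsilon$'s for the moment and expand $e^{-\d^2(i+k)^2}=\sum_{r\ge0}\frac{(-\d^2)^r}{r!}(i+k)^{2r}$. By Lemma~\ref{useful}, applied to the polynomial $(i+k)^{2r}$ of degree $2r$ in $i$, the inner alternating sum vanishes whenever $2r<m-k$. If $2r=m-k$, it equals $(m-k)!$ up to the sign $(-1)^{m-k}$, which cancels against the outer sign. All terms with $2r>m-k$ are $O(\d^{k+2r})=o(\d^m)$, since the series converges uniformly for bounded $\d$.

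Consequently the coefficient of $x^k$ is $o(\d^m)$ when $m-k$ is odd. When $m-k=2r$ it equals $\binom{m}{k}\frac{(-1)^r(2r)!}{r!}\d^m+o(\d^m)=\frac{m!(-1)^r}{k!\,r!}\d^m+o(\d^m)$. This is exactly $\d^m$ times the coefficient of $x^{m-2r}$ in $H_m(\frac x2)=m!\sum_r\frac{(-1)^r x^{m-2r}}{r!(m-2r)!}$ from \eqref{Hermitedef}. Dividing by $\d^m(n)$ yields the claim, provided the error terms are controlled.

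\textbf{Step 3 (the error terms — the main obstacle).} The $\varepsilon$-contribution to the coefficient of $x^k$ is $\binom{m}{k}\d^k\sum_i\binom{m-k}{i}(-1)^{m-k-i}e^{-\d^2(i+k)^2}\varepsilon_{i+k}$. For this to be negligible after dividing by $\d^m$, we need it to be $o(\d^{m})$. Crude bounds only give $o(\d^k)$, so the real content of the theorem lies in this step.

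The first thing I would try is to exploit that $\varepsilon_j(n)$ comes from a $\log$ of ratios. The idea is to write $\varepsilon_j(n)$ as a function of $j$ that is smooth and slowly varying in $n$, for instance through the real-$j$ formulation of \ref{GenGORZThm1assump1}. One would then show that its $(m-k)$-th finite differences in $j$ are of size $o(\d^{m-k})$, so that Lemma~\ref{useful}-type cancellation applies to the error as it does to the main term.

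If that fails, I would reinterpret the $o(1)$ in \ref{GenGORZThm1assump1} as the scale-adapted error that the argument actually needs. Concretely, I would absorb any higher-order terms $A_k(n)j^k$ with $A_k=o(\d^k)$, as in \Cref{GORZThm1}, into the expansion in Step 2. Those terms contribute $o(\d^m)$ by the same vanishing-moment argument, and this recovers \Cref{GORZThm1} as a special case.
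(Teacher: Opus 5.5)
Your Steps 1 and 2 are correct and match the paper's main-term computation: expand $e^{-\delta^2(n)j^2}$, kill the low moments with \Cref{useful}, and read off $H_m(\frac x2)$. (The sign factor should be $(-1)^i$; it agrees with your $(-1)^{m-k-i}$ whenever $m-k$ is even, so this is harmless.)

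The genuine gap is Step 3, and your diagnosis there is exactly right. The error contribution to the coefficient of $x^k$ is only $o(\delta^k(n))$, but $o(\delta^m(n))$ is needed. Neither of your remedies closes this under the stated hypotheses. The first cannot work: (C1) concerns only the finitely many integers $0\le j\le m$ and says nothing about the errors beyond their tending to $0$ for each $j$, so there is no regularity in $j$ to exploit. The second changes the hypothesis. With error $o(\delta^m(n))$ in (C1), possibly plus terms $A_k(n)j^k$ with $A_k(n)=o(\delta^k(n))$, your three steps do give a complete proof — but of \Cref{GORZThm1}, not of the statement at hand.

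In fact no argument can close the gap, because the statement is false with only an $o(1)$ error. Take $m=2$ and $a(n)=\exp(\sqrt n+(-1)^n/n)$ for $n\ge1$, with $A(n)=\frac{1}{2\sqrt n}$ and $\delta^2(n)=\frac{1}{8n^{3/2}}$. Write $\log\frac{a(n+j)}{a(n)}=A(n)j-\delta^2(n)j^2+\eta_j(n)$. Since $\sqrt{n+j}-\sqrt n=\frac{j}{2\sqrt n}-\frac{j^2}{8n^{3/2}}+O(n^{-5/2})$, we get $\eta_j(n)=O(\frac1n)$, so (C1) holds for $0\le j\le 2$, and (C2) holds. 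Put $u_j:=-\delta^2(n)j^2+\eta_j(n)$. The left-hand side at $x=0$ is $\delta^{-2}(n)(1-2e^{u_1}+e^{u_2})$. Along even $n$ we have $\eta_1(n)=-\frac2n+O(n^{-2})$ and $\eta_2(n)=O(n^{-2})$, so this equals
\[
-2+\frac{\eta_2(n)-2\eta_1(n)+O\left(n^{-2}\right)}{\delta^2(n)}=-2+32\sqrt n+O\left(n^{-\frac12}\right)\longrightarrow\infty,
\]
whereas $H_2(0)=-2$. The same computation gives $\log\frac{a(n)a(n+2)}{a(n+1)^2}=-\frac{1}{4n^{3/2}}+\frac4n+O(n^{-2})>0$ for large even $n$. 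So even the hyperbolicity claim of \Cref{GenGORZcor1} fails for this sequence.

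The paper's proof does not avoid this. In its first display it pulls one common factor $1+o(1)$ out of the entire sum over $j$. That is not justified: the errors differ with $j$, and since the main terms cancel down to size $\delta^m(n)$, an $o(1)$ relative error on each individual term does not survive division by $\delta^m(n)$. This is precisely the step you flagged. A correct version is obtained by strengthening the error in (C1) to $o(\delta^m(n))$, which is your second option.
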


\begin{proof}
	Using \eqref{Jensendef} and \ref{GenGORZThm1assump1} with $0\le j\le m$, we have  
	\begin{align}\label{Jenseneqn1} 
	&\frac{J^{m,n}_{a}\left(\frac{\d(n)x-1}{e^{A(n)}}\right)}{a(n)\d^{m}(n)}
	=\frac{1+o\left(1\right)}{\d^{m}(n)}\left(\sum_{j=0}^{m}\binom{m}{j}\left(\d(n)x-1\right)^j+\sum_{j=0}^{m}\binom{m}{j}\sum_{r=1}^{\left\lfloor\frac m2\right\rfloor}\frac{(-1)^r\d^{2r}(n)j^{2r}}{r!}\left(\d(n)x-1\right)^j\right.\nonumber\\[-1em]
	&\hspace{7.5 cm}\left.+\sum_{j=0}^{m}\binom{m}{j}\!\!\!\!\!\!\sum_{r\ge \left\lfloor\frac m2\right\rfloor+1}\!\!\!\!\!\frac{(-1)^r\d^{2r}(n)j^{2r}}{r!}\left(\d(n)x-1\right)^j\right)\nonumber\\
	&=\left(1+o\left(1\right)\right)\left(x^{m}+\sum_{r=1}^{\left\lfloor\frac m2\right\rfloor}\frac{(-1)^r\d^{2r-m}(n)}{r!}\sum_{j=0}^{m}\binom{m}{j}j^{2r}\left(\d(n)x-1\right)^j\right.\nonumber\\[-1.2em]
	&\hspace{5 cm}\left.+\d^{-m}(n)\sum_{j=0}^{m}\binom{m}{j}\!\!\!\!\!\!\sum_{r\ge \left\lfloor\frac m2\right\rfloor+1}\!\!\!\!\!\frac{(-1)^r\d^{2r}(n)j^{2r}}{r!}\left(\d(n)x-1\right)^j\right).
	\end{align}

	Next, we expand
	\begin{align}\label{Jenseneqn2}
	&x^m+\sum_{r=1}^{\left\lfloor\frac m2\right\rfloor}\frac{(-1)^r\d^{2r-m}(n)}{r!}\sum_{j=0}^{m}\binom{m}{j}j^{2r}\left(\d(n)x-1\right)^j\nonumber\\
	&\hspace{1 cm}=x^m+\sum_{r=1}^{\left\lfloor\frac m2\right\rfloor}\frac{(-1)^r}{r!}\sum_{\ell=0}^{m-2r}(-1)^{\ell}\d(n)^{2r-m+\ell}x^{\ell}\sum_{j=\ell}^{m}(-1)^j\binom{m}{j}\binom{j}{\ell}j^{2r}\nonumber\\
	&\hspace{2 cm}+\sum_{r=1}^{\left\lfloor\frac m2\right\rfloor}\frac{(-1)^r}{r!}\sum_{\ell=m-2r+1}^{m}(-1)^{\ell}\d(n)^{2r-m+\ell}x^{\ell}\sum_{j=\ell}^{m}(-1)^j\binom{m}{j}\binom{j}{\ell}j^{2r}.
	\end{align}
Using \eqref{Hermitedef} and \Cref{useful}, the first two terms combine as 
\begin{align*}
x^m+\sum_{r=1}^{\left\lfloor\frac m2\right\rfloor}\frac{(-1)^r}{r!}\sum_{\ell=0}^{m-2r}\binom{m}{\ell}\d(n)^{2r-m+\ell}x^{\ell}\sum_{k=0}^{2r}\binom{2r}{k}\ell^{2r-k}\sum_{j=0}^{m-\ell}(-1)^{j}\binom{m-\ell}{j}j^k=H_m\left(\frac x2\right).
\end{align*}
Plugging this into \eqref{Jenseneqn2} and then into \eqref{Jenseneqn1}, we have 
	\begin{align}\label{Jenseneqn5}
	&\frac{J^{m,n}_{a}\left(\frac{\d(n)x-1}{e^{A(n)}}\right)}{a(n)\d^{m}(n)}
	=\left(1+o\left(1\right)\right)\left(H_m\left(\frac x2\right)+S^{[1]}_m(n,x)+S^{[2]}_m(n,x)\right),
	\end{align}
	where 
	\begin{align*}
	S^{[1]}_m(n,x)&:=\sum_{r=1}^{\left\lfloor\frac m2\right\rfloor}\frac{(-1)^r}{r!}\sum_{\ell=m-2r+1}^{m}(-1)^{\ell}\d(n)^{2r-m+\ell}x^{\ell}\sum_{j=\ell}^{m}(-1)^j\binom{m}{j}\binom{j}{\ell}j^{2r},\nonumber\\
	S^{[2]}_m(n,x)&:=\d^{-m}(n)\sum_{j=0}^{m}\binom{m}{j}\sum_{r\ge \left\lfloor\frac m2\right\rfloor+1}\frac{\left(-1\right)^r}{r!}\d^{2r}(n)j^{2r}\left(\d(n)x-1\right)^j.
	\end{align*}
Using \ref{GenGORZThm1assump2}, we then bound 	
	\begin{align}\label{Jenseneqn9}
	\left|S^{[1]}_m(n,x)\right|&\le 
	\d(n)\!\sum_{r=1}^{\left\lfloor\frac m2\right\rfloor}\!\frac{m^{2r}}{r!}\!\sum_{\ell=m-2r+1}^{m}\!\binom{m}{\ell}\!|x|^{\ell}\sum_{j=\ell}^{m}\!\binom{m-\ell}{j-\ell}
	=2^m\d(n)\sum_{r=1}^{\left\lfloor\frac m2\right\rfloor}\!\frac{m^{2r}}{r!}\!\sum_{\ell=m-2r+1}^{m}\!\binom{m}{\ell}\pa{|x|}{2}^{\ell}\nonumber\\
	&
	\le 2^m\left(e^{m^2}-1\right)\left(\left(1+\frac{|x|}{2}\right)^m-1\right)\d(n)=O_m\left(\d(n)\right),
	\end{align}
assuming that $x$ lies in a compact set. Next, using \ref{GenGORZThm1assump2}, we similarly obtain 
	\begin{align*}
\left|S^{[2]}_m(n,x)\right|=O_m\left(\d(n)\right). 
\end{align*}
Plugging this and \eqref{Jenseneqn9} into \eqref{Jenseneqn5}, and letting $n\to \infty$, we obtain the theorem.\qedhere 
\end{proof}

\begin{proof}[Proof of \Cref{GenGORZcor1}]
 It is not hard to see that if a sequence of real polynomials $P^{[d]}_n(x)$ of degree $d$ converges locally uniformly to a real polynomial $P_d(x)$ of degree $d$ with only real simple zeros, then $P^{[d]}_{n}$ has only real simple zeros for $n\!\gg\!1$. Applying this to \Cref{GenGORZThm1} and using the fact that $H_m(x)$ has simple real zeros (see \cite[§18.16(v)]{DLMF}), we obtain the claim.\qedhere 	
\end{proof}

In particular for $m=2$, if $\{a(n)\}_{n\ge 0}$ satisfies \ref{GenGORZThm1assump1} with $0\le j\le 2$ and \ref{GenGORZThm1assump2}, then by \Cref{GenGORZcor1}, it is log-concave for $n\gg 1$. In \Cref{HypEquivthm}, we show that under certain conditions, hyperbolicity of $J^{2,n}_a$ holds iff hyperbolicity of $J^{m,n}_a$ holds for $m\in \mathbb{N}_{\ge 2}$. We require: 
\begin{enumerate}[label=(C\arabic*), start=13]
	\item \label{Nlem1assump2} For $n\!\gg\!1$, $a(n)$ satisfies \eqref{log-concave}.
	\item \label{Nlem1assump3} For all $j\in \mathbb{N}_0$, $L_j:=\lim_{n\to \infty}\frac{a(n+j)}{a(n)}$ is independent of $j$.
\end{enumerate}
\begin{lemma}\label{Nlem1}
 Let $\{a(n)\}_{n\ge 0}$ satisfy {\rm{\ref{Nlem1assump2}}}. 
Then, for all $j\in \mathbb{N}_0$, $L_j=1$.
\end{lemma}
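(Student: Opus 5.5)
The plan is to combine the monotonicity of consecutive ratios forced by log-concavity with a telescoping product. As stated, the lemma cannot hold with \ref{Nlem1assump2} alone: $a(n)=2^n$ is log-concave with $L_j=2^j$. So I will read the hypothesis as \ref{Nlem1assump2} together with \ref{Nlem1assump3}, which is how the lemma is set up just before its statement.

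First, existence of the limits. Since $a(n)>0$, \eqref{log-concave} for $n\ge n_0$ can be rewritten as
\[
\frac{a(n+1)}{a(n)}\le \frac{a(n)}{a(n-1)} \qquad (n\ge n_0).
\]
Hence $r(n):=\frac{a(n+1)}{a(n)}$ is eventually weakly decreasing and bounded below by $0$, so $L_1=\lim_{n\to\infty}r(n)$ exists and satisfies $0\le L_1<\infty$.

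Next, the telescoping step. For fixed $j\in\mathbb{N}_0$ we have
\[
\frac{a(n+j)}{a(n)}=\prod_{i=0}^{j-1}r(n+i).
\]
This is a product of a fixed number $j$ of factors, each tending to $L_1$, so $L_j$ exists and $L_j=L_1^j$ for every $j$. In particular $L_0=1$, since the empty product equals $1$.

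Finally, apply \ref{Nlem1assump3}: $L_j$ does not depend on $j$. Then $L_j=L_0=1$ for all $j$ immediately. Equivalently, $L_1=L_1^2$ forces $L_1\in\{0,1\}$, and the case $L_1=0$ is excluded because it would give $L_1=0\ne 1=L_0$.

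The only real point is the existence of the limit $L_1$. If one wants to avoid assuming the $L_j$ exist a priori in \ref{Nlem1assump3}, log-concavity supplies that existence through the monotone ratio argument above. I do not expect any genuine obstacle beyond making explicit that the conclusion needs \ref{Nlem1assump3}, or some equivalent normalization such as subexponential growth of $a(n)$.
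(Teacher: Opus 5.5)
Your proof is correct and follows the paper's argument: log-concavity makes $a(n+j)/a(n)$ eventually positive and weakly decreasing, so the limits exist, and then (C14) together with $L_0=1$ gives $L_j=1$ (your telescoping identity $L_j=L_1^j$ is a harmless extra step). You are also right that (C13) alone does not suffice, with $a(n)=2^n$ as a counterexample; the paper's own proof likewise invokes (C14), so the lemma should be read with that hypothesis included.
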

\begin{proof}
Using \ref{Nlem1assump2} and $a(n)>0$ for $n\in \mathbb{N}$,
$\{\frac{a(n+j)}{a(n)}\}_{n\ge 0}$ is positive and weakly decreasing. Thus 
$L_j\ge 0$. By \ref{Nlem1assump3}, $L_j$ is independent of $j$. Computing $L_0=1$ gives the claim.
\end{proof}
 Moreover, we assume the following:

\begin{enumerate}[label=(C\arabic*), start=15]
\item \label{Nlem2assump1} For all $j\in \mathbb{N}_0$, we have 
\begin{equation*}
\log\left(\frac{a(n+j)}{a(n)}\right)=\log\left(1+\psi_n(j)\right)+o(1)
\ \text{ as }\ n\to \infty.
\end{equation*}
\end{enumerate}
Here $\psi_x(y)$ has an asymptotic expansion (as $x\to \infty$) of the form $\psi_x(y)=\sum_{\ell\ge 1}\frac{c_{\ell}(x)}{\ell!}y^{\ell}$ with $\lim_{x\to \infty}c_1(x)=0$, $c_1(x)>0$, and $c_2(x)=o(c^2_1(x))$ both for $x\!\gg\!1$.
\begin{lemma}\label{Nlem2}
If $\{a(n)\}_{n\ge 0}$ satisfies {\rm{\ref{Nlem1assump2}}}\textendash{\rm{\ref{Nlem2assump1}}}, then it satisfies {\rm{\ref{GenGORZThm1assump1}}} with $0\le j\le m$ and {\rm{\ref{GenGORZThm1assump2}}}.
\end{lemma}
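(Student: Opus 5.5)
The natural route is to Taylor expand the logarithm in \ref{Nlem2assump1} and read off $A(n)$ and $\d(n)$ from the first two coefficients. For fixed $j$ with $0\le j\le m$ we have, as $n\to\infty$,
\[
\psi_n(j)=c_1(n)j+\frac{c_2(n)}{2}j^2+\sum_{\ell\ge 3}\frac{c_\ell(n)}{\ell!}j^\ell .
\]
Expanding $\log(1+\psi_n(j))=\psi_n(j)-\frac{\psi_n(j)^2}{2}+O(|\psi_n(j)|^3)$ and collecting powers of $j$ suggests the choices
\[
A(n):=c_1(n),\qquad \d^2(n):=\frac{c_1^2(n)-c_2(n)}{2}.
\]
With these, $\log\left(\frac{a(n+j)}{a(n)}\right)=A(n)j-\d^2(n)j^2+E_n(j)+o(1)$. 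Here $E_n(j)$ collects the terms of order $\ge3$ in $j$, including $-c_1c_2j^3/2$, $c_3j^3/6$, and so on.

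The steps are then as follows.

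First, check \ref{GenGORZThm1assump2}. Since $c_2=o(c_1^2)$, we get $\d^2(n)=\frac{c_1^2(n)}{2}(1+o(1))$. Hence $\d(n)>0$ for $n\gg1$, and $\d(n)\to0$ because $c_1(n)\to0$.

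Second, show $E_n(j)=o(1)$ uniformly for $0\le j\le m$. This reduces to showing $\psi_n(j)\to0$ for each fixed $j$. The cubic and higher terms of the log expansion are then $o(1)$, and the residual pieces of $\psi-\psi^2/2$ beyond degree two also vanish in the limit.

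Third, the $o(1)$ error in \ref{Nlem2assump1} passes through unchanged, so \ref{GenGORZThm1assump1} holds with $0\le j\le m$.

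The main obstacle is controlling the higher coefficients $c_\ell(n)$, $\ell\ge3$: nothing explicit in \ref{Nlem2assump1} says they tend to $0$. To get this I would use \ref{Nlem1assump2} and \ref{Nlem1assump3} through \Cref{Nlem1}. Since $L_j=1$, we have $a(n+j)/a(n)\to1$. Combined with \ref{Nlem2assump1} this gives $\log(1+\psi_n(j))\to0$, hence $\psi_n(j)\to0$ for every fixed $j\in\mathbb{N}_0$.

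To pass from this to the coefficients, view the finitely many values $\psi_n(0),\dots,\psi_n(m')$ for a large $m'$. Treat the asymptotic expansion as a polynomial in $j$ up to the relevant order, with a Vandermonde-type inversion (finite differences, as in \Cref{useful}). Each $c_\ell(n)$ with $\ell\le m'$ is then a fixed linear combination of the $\psi_n(j)$ plus the tail, so $c_\ell(n)\to0$. Having all $c_\ell(n)=o(1)$ makes every term of $E_n(j)$ with $j\le m$ negligible.

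I expect the delicate point to be justifying that the expansion of $\psi_x(y)$ may be truncated uniformly in $0\le j\le m$. I would first try to interpret the asymptotic expansion as giving, for each $K$, $\psi_n(j)=\sum_{\ell\le K}\frac{c_\ell(n)}{\ell!}j^\ell+o(1)$ for bounded $j$, which is exactly what the argument needs.
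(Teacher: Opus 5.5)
Your proposal is correct and is essentially the paper's argument. You make the same choices $A(n)=c_1(n)$ and $\d(n)=\sqrt{\frac12\left(c_1^2(n)-c_2(n)\right)}$, obtain $\psi_n(j)\to0$ from \Cref{Nlem1} and \ref{Nlem2assump1}, and get \ref{GenGORZThm1assump2} from $c_1(n)\to0$, $c_1(n)>0$ and $c_2=o(c_1^2)$. The Vandermonde inversion for $c_\ell(n)$, $\ell\ge3$, is superfluous, as your second step already shows: \ref{GenGORZThm1assump1} only asks for an $o(1)$ error and $A(n)j,\ \d^2(n)j^2\to0$, so $\log\left(\frac{a(n+j)}{a(n)}\right)\to0$ (which is what \Cref{Nlem1} gives) is all that is needed.
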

\begin{proof}
 By \Cref{Nlem1} and \ref{Nlem2assump1}, $\lim_{n\to \infty}\psi_n(j)=0$ for all $j\in \mathbb{N}_0$. By \ref{Nlem2assump1}, $\lim_{n\to \infty}c_1(n)=\lim_{n\to \infty}c_2(n)=0$.  Using this and \ref{Nlem2assump1}, $\{a(n)\}_{n\ge 0}$ satisfies \ref{GenGORZThm1assump1} with $0\le j\le m$, 
$A(n):=c_1(n)$, and $\d(n):=\sqrt{\frac 12(c^2_1(n)-c_2(n))}$ and by \ref{Nlem2assump1}, $\{\d(n)\}_{n\ge 0}$ satisfies \ref{GenGORZThm1assump2}.\qedhere  
\end{proof}

Using \Cref{Nlem2} and \Cref{GenGORZcor1}, we obtain the following theorem. 

\begin{theorem}\label{HypEquivthm}
Let $\{a(n)\}_{n\ge 0}$ satisfy {\rm{\ref{Nlem1assump3}}} and  {\rm{\ref{Nlem2assump1}}} and assume that, 
 for $n\gg1$,
$J^{2,n}_a(x)$ is hyperbolic. Then $J^{m,n}_a(x)$ is hyperbolic for $m\in \mathbb{N}_{\ge 2}$ and $n\!\gg\!1$.
\end{theorem}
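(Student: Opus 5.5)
The plan is to reduce \Cref{HypEquivthm} to \Cref{GenGORZcor1} through \Cref{Nlem2}. The hypotheses of \Cref{Nlem2} are \ref{Nlem1assump2}, \ref{Nlem1assump3} and \ref{Nlem2assump1}. Two of them, \ref{Nlem1assump3} and \ref{Nlem2assump1}, are assumed directly in \Cref{HypEquivthm}. So the only thing to supply is \ref{Nlem1assump2}, namely eventual log-concavity of $\{a(n)\}_{n\ge 0}$.

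First I obtain \ref{Nlem1assump2} from the hyperbolicity of $J^{2,n}_a$. We have $J^{2,n}_a(x)=a(n)+2a(n+1)x+a(n+2)x^2$, and its leading coefficient $a(n+2)$ is positive. This quadratic has only real roots iff its discriminant is nonnegative, i.e.
\[
4a^2(n+1)-4a(n)a(n+2)\ge 0.
\]
Equivalently, \eqref{log-concave} holds at $n+1$, as recalled from \cite{Chen1} after \Cref{GORZcor1}. Hence hyperbolicity of $J^{2,n}_a$ for $n\gg1$ gives \ref{Nlem1assump2} for $n\gg1$.

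Next I apply \Cref{Nlem2}. Internally this uses \Cref{Nlem1} to get $L_j=1$, and then the expansion of $\psi_n$ from \ref{Nlem2assump1}. The conclusion is that $\{a(n)\}_{n\ge 0}$ satisfies \ref{GenGORZThm1assump1} for $0\le j\le m$, with $A(n)=c_1(n)$ and $\d(n)=\sqrt{\frac12(c_1^2(n)-c_2(n))}$, together with \ref{GenGORZThm1assump2}. Since \ref{Nlem2assump1} holds for every $j\in\N_0$, this works for each fixed $m\ge 2$. \Cref{GenGORZcor1} then gives that $J^{m,n}_a$ is hyperbolic for $n\gg1$, where the threshold may depend on $m$. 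This is all the statement asks for.

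The only step that needs care is checking that \Cref{Nlem2} really delivers \ref{GenGORZThm1assump2}. We need $\d(n)>0$ eventually, which follows from $c_2=o(c_1^2)$ and $c_1>0$. We also need $\d(n)\to0$, which follows from $c_1\to0$. Finally, the expansion $\log(1+\psi_n(j))=c_1(n)j+\frac12(c_2(n)-c_1^2(n))j^2+\dots$ must have remainder $o(1)$ for bounded $j$, which holds because every coefficient $c_\ell(n)$ that enters tends to $0$. I take these as given by \Cref{Nlem2}, so no genuine obstacle remains.
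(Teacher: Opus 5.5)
Your reduction is the paper's own: the paper's proof is just ``by \Cref{Nlem2} and \Cref{GenGORZcor1}'', and you additionally supply \ref{Nlem1assump2} from the discriminant of $J^{2,n}_a$, which is correct. The genuine gap is the final appeal to \Cref{GenGORZcor1}, which does not hold as stated, and the triviality of your (C1) check shows why. You do not need, and are not given, $c_\ell(n)\to 0$ for $\ell\ge 3$: since $a(n+j)/a(n)\to 1$ and $A(n)=c_1(n)\to 0$, $\delta(n)\to 0$, both sides of \ref{GenGORZThm1assump1} are simply $o(1)$. So with an $o(1)$ error, \ref{GenGORZThm1assump1} and \ref{GenGORZThm1assump2} carry no information beyond $a(n+j)/a(n)\to 1$, and that cannot force hyperbolicity. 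Already for $m=2$, the sequence $a(n)=e^{(-1)^n/n}$ ($n\ge 1$) satisfies both with $A(n)=0$ and $\delta(n)=1/n$, yet $J^{2,n}_a$ is not hyperbolic for any even $n$. The Hermite limit in \Cref{GORZThm1} needs an error $o(\delta^m(n))$ and $A_k(n)=o(\delta^k(n))$. This is because $\sum_{j=0}^{m}\binom{m}{j}(\delta(n)x-1)^j=\delta^m(n)x^m$ arises from cancellation among terms of size $1$, so a separate $o(1)$ error for each $j$, once divided by $\delta^m(n)$, is not small. The proof of \Cref{GenGORZThm1} hides this by pulling a single common factor $1+o(1)$ out of the sum in \eqref{Jenseneqn1}.

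In fact the statement itself fails as formulated. Let $\alpha_k:=k^{-2}$ for odd $k$ and $\alpha_k:=2k^{-2}$ for even $k$. Put $s(n):=\sum_{k\ge n}\alpha_k$ and $a(n):=\exp(\sum_{i=1}^{n}s(i))$.
\begin{itemize}
\item The second differences of $\log a$ are $-\alpha_n<0$, so $J^{2,n}_a$ is hyperbolic for every $n$.
\item Since $a(n+1)/a(n)=e^{s(n+1)}\to 1$, condition \ref{Nlem1assump3} holds.
\item Condition \ref{Nlem2assump1} holds with $\psi_x(y):=y/x$, i.e.\ $c_1(x)=1/x$ and $c_\ell=0$ for $\ell\ge 2$.
\end{itemize}
Put $u:=e^{-\alpha_{n+1}}$ and $v:=e^{-\alpha_{n+2}}$. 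The discriminant of $J^{3,n}_a$ is then a positive multiple of
\[
4(1-u)(1-v)-(1-uv)^2=-(\alpha_{n+1}-\alpha_{n+2})^2+O(n^{-6})\sim -n^{-4},
\]
so $J^{3,n}_a$ has non-real roots for all large $n$. Roughly, hyperbolicity of $J^{3,n}_a$ forces consecutive second differences of $\log a$ (of size $2\delta^2(n)$) to agree up to a relative error $O(\delta(n))$. An $o(\delta^m(n))$ error term retains this information, while an $o(1)$ error discards it. A valid reduction therefore needs a hypothesis of the strength in \Cref{GORZThm1}, and \ref{Nlem2assump1} as stated does not supply it.
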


 Next, we need the following elementary lemmas.

\begin{lemma}\label{Fact1}
	If $f(n)\sim g(n)$ as $n\to \infty$, then, for $\a_1\le j\le \a_2$ 
	 with $j$ fixed, 
	\[
	\log\left(\frac{f(n+j)}{f(n)}\right)=\log\left(\frac{g(n+j)}{g(n)}\right)+o(1)\ \ \text{as}\ \ n\to\infty.
	\]
\end{lemma}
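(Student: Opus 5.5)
Since $f(n)\sim g(n)$, we may write $f(n)=g(n)\left(1+\varepsilon(n)\right)$ with $\varepsilon(n)\to 0$ as $n\to\infty$. Implicitly $f$ and $g$ are positive for $n\gg1$, as is the case for all sequences considered here, so that the logarithms make sense. The plan is to substitute this into the quotient and compare it directly with $\frac{g(n+j)}{g(n)}$.

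First, for fixed $j$ and $n\gg1$ with $n+j\ge 0$,
\[
\frac{f(n+j)}{f(n)}=\frac{g(n+j)}{g(n)}\cdot\frac{1+\varepsilon(n+j)}{1+\varepsilon(n)}.
\]
Taking logarithms gives
\[
\log\left(\frac{f(n+j)}{f(n)}\right)=\log\left(\frac{g(n+j)}{g(n)}\right)+\log\left(1+\varepsilon(n+j)\right)-\log\left(1+\varepsilon(n)\right).
\]

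Second, because $j$ is fixed, $n+j\to\infty$ together with $n$, so $\varepsilon(n+j)\to0$. By continuity of $\log$ at $1$, both correction terms are $o(1)$, which proves the claim.

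The statement allows $j$ to range over $\alpha_1\le j\le\alpha_2$. Since $j$ is taken fixed, this is a pointwise statement for each $j$. If uniformity over the bounded range is wanted, note that $\sup_{\alpha_1\le j\le\alpha_2}|\varepsilon(n+j)|\to0$, because $\varepsilon(m)\to0$ as $m\to\infty$ and $n+j\ge n+\alpha_1\to\infty$. Hence the $o(1)$ is uniform in $j$.

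There is no real obstacle. The only points needing care are positivity for large $n$, so the logarithms are defined, and the observation that a fixed (or bounded) shift preserves $n\to\infty$.
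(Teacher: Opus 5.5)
Your argument is correct, and it is the routine verification the paper has in mind. The paper lists this as an elementary lemma and gives no proof; writing $f=g(1+\varepsilon)$, so that the discrepancy is $\log(1+\varepsilon(n+j))-\log(1+\varepsilon(n))=o(1)$ with positivity for $n\gg1$ implicit (as you note), is all that is needed, and your uniformity remark is automatic anyway since only finitely many integer shifts $j$ lie in $[\alpha_1,\alpha_2]$.
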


\begin{lemma}\label{fact2}
Let $\{a_d(n)\}_{n\ge 0}$ be as in \eqref{genexample}. 
\begin{enumerate}
	\item \label{fact2part1} The sequence $\{a_d(n)\}_{n\ge 0}$ satisfies {\rm{\ref{GenGORZThm1assump1}}} for all $\a_1\le j\le \a_2$, {\rm{\ref{GenGORZThm1assump2}}}, and {\rm{\ref{genthmassump2}}}.
	\item \label{fact2part2} The sequence $\{\d(n)\}_{n\ge 0}$ satisfies {\rm{\ref{InflogconcaveThmassump1}}} for all $\a_1\le j\le \a_2$, {\rm{\ref{InflogconcaveThmassump2}}}, and {\rm{\ref{InflogconcaveThmassump3}}}. 
\end{enumerate}
\end{lemma}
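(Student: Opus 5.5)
By Lemma \ref{Fact1}, we may replace $a_d(n)$ by its asymptotic main term
\[
g(n):=c_1(d)\exp\Big(\sum_{\l\in\mathcal S}A_\l(d)n^\l\Big)n^{-c_2(d)}
\]
in all the ratio statements. Fix $j$ with $\a_1\le j\le\a_2$. We then Taylor expand
\[
\log\frac{g(n+j)}{g(n)}=\sum_{\l\in\mathcal S}A_\l(d)\big((n+j)^\l-n^\l\big)-c_2(d)\log\Big(1+\frac jn\Big).
\]
Since $j$ is bounded, we have $(n+j)^\l-n^\l=\l n^{\l-1}j+\frac{\l(\l-1)}2n^{\l-2}j^2+O(n^{\l-3})$ and $\log(1+\frac jn)=\frac jn+O(n^{-2})$. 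Because every $\l<1$ by \ref{genexampleassump2}, all cubic and higher terms, as well as the $O(n^{-2})$ term, are $o(1)$.

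This motivates the definitions
\[
A(n):=\sum_{\l\in\mathcal S}\l A_\l(d)n^{\l-1}-\frac{c_2(d)}n,\qquad \d^2(n):=\sum_{\l\in\mathcal S}\frac{\l(1-\l)}2A_\l(d)n^{\l-2}.
\]
With these choices, \ref{GenGORZThm1assump1} holds for any fixed range of $j$. Condition \ref{genthmassump2} follows because $\l-1<0$. For \ref{GenGORZThm1assump2}, the term with $\l=\l^*$ dominates in $\d^2(n)$, and by \ref{genexampleassump3} it is positive (note $\l^*(1-\l^*)>0$). Hence $\d^2(n)\sim\frac{\l^*(1-\l^*)}2A_{\l^*}(d)n^{\l^*-2}$, so $\d(n)>0$ for $n\gg1$ and $\d(n)\to0$. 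A minor point: $\d$ is only defined as a positive square root for large $n$, and the small-$n$ values can be set arbitrarily.

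For part \eqref{fact2part2}, we write
\[
\d(n)=\kappa\, n^{\frac{\l^*}2-1}\big(1+E(n)\big)^{\frac12},\qquad \kappa:=\Big(\frac{\l^*(1-\l^*)}2A_{\l^*}(d)\Big)^{\frac12},
\]
where $E(n)$ is a finite sum of terms $e_\mu n^{-\mu}$ with positive rational exponents $\mu=\l^*-\l$. Then
\[
\log\frac{\d(n+j)}{\d(n)}=\Big(\frac{\l^*}2-1\Big)\log\Big(1+\frac jn\Big)+\frac12\Big(\log\big(1+E(n+j)\big)-\log\big(1+E(n)\big)\Big).
\]
Expanding as before gives linear coefficient $B(n)=O(1/n)$ and quadratic coefficient $\b^2(n)=O(n^{-2})$, up to errors that are $o(1)$. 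Thus \ref{InflogconcaveThmassump2} holds.

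The hardest step is \ref{InflogconcaveThmassump3}, together with the sign convention $+\b^2(n)j^2$. The quadratic coefficient is $-\frac12(\frac{\l^*}2-1)n^{-2}+O(n^{-2-\mu})$, which is positive for large $n$, so we can take $\b(n)$ to be its square root, and $\b(n)\asymp n^{-1}$. Comparing with $\d(n)\asymp n^{\frac{\l^*}2-1}$ gives $\b(n)/\d(n)\asymp n^{-\l^*/2}\to0$, since $\l^*>0$. If the sign were ever an issue, we would instead absorb the entire quadratic term into the $o(1)$ error, because it is $O(n^{-2})$, and choose $\b\equiv0$; this also satisfies \ref{InflogconcaveThmassump3} trivially. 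Finally, one checks that the error terms arising from $E$ are uniform over the bounded range of $j$, which is immediate since only finitely many terms appear.
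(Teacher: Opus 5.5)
Your proposal is correct and follows essentially the same route as the paper: you replace $a_d$ by its main term via Lemma~\ref{Fact1}, Taylor expand the log-ratio to read off $A(n)$ and $\delta(n)$, and then expand the log-ratio of $\delta$, comparing $\beta(n)\asymp n^{-1}$ with $\delta(n)\asymp n^{\frac{\lambda^*}{2}-1}$. The differences are cosmetic: you omit the harmless term $-\frac{c_2(d)}{2n^2}$ that the paper keeps in $\delta^2(n)$, and in part (2) you carry the factor $(1+E(n))^{1/2}$ through the expansion, whereas the paper applies Lemma~\ref{Fact1} to \eqref{impfact} to discard it and gets the explicit choices $B(n)=(\frac{\lambda^*}{2}-1)\frac1n$ and $\beta(n)=\sqrt{\frac12(1-\frac{\lambda^*}{2})}\,\frac1n$.
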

\begin{proof}
From \eqref{genexample} and \Cref{Fact1}, we have,
\begin{align}\label{neweqn9}
\log\left(\frac{a_d(n+j)}{a_d(n)}\right)
&=A(n)j-\d^2(n)j^2+\sum_{\ell\ge 3}A_{\ell}(n)j^\ell+o(1)\ \ \left(\text{ as}\ n\to \infty\right),
\end{align}
where 
\begin{align*}
A(n)&:=\sum_{\l\in \mathcal{S}}\l A_{\l}(d)n^{\l-1}-\frac{c_2(d)}{n},\quad \d(n):=\frac1{\sqrt2n}\sqrt{\sum_{\l\in \mathcal{S}}\l(1-\l) A_{\l}(d)n^{\l}-c_2(d)},\\
A_{\ell}(n)&:=\sum_{\l\in \mathcal{S}}\binom{\l}{\ell}A_{\l}(d)n^{\l-\ell}+\frac{(-1)^\ell c_2(d)}{\ell n^\ell}.
\end{align*}

\noindent \ref{fact2part1} Using \ref{genexampleassump2} and \ref{genexampleassump3}, we have, as $n\to \infty$,  
\begin{equation}\label{obs4}
A(n)\sim \frac{\l^*A_{\l^*}(d)}{n^{1-\l^*}}\to 0.
\end{equation}
Thus \ref{genthmassump2} is satisfied. From \ref{genexampleassump3}, we obtain $\d(n)>0$ for $n\!\gg\!1$. Using \ref{genexampleassump3}, we have 
\begin{equation}\label{impfact}
\d(n)\sim \sqrt{\frac{\l^*(1-\l^*)A_{\l^*}(d)}{2}}n^{\frac{\l^*}{2}-1}\to 0\ \ \ \ \ \ (\text{as}\ n\to \infty). 
\end{equation}
 Thus \ref{GenGORZThm1assump2} is satisfied. Next, for  $\a_1\le j\le \a_2$, we show that
\begin{align}
\label{jenseneqn1}
\sum_{\ell\ge 3}A_{\ell}(n)j^\ell&=o\left(1\right).
\end{align}
Using $|\smallbinom{\l}{\ell}|\le \l$ for $\ell\in \mathbb{N}_{\ge 3}$ and \ref{genexampleassump2}\textendash\ref{genexampleassump4}, we estimate, for $\ell\in\mathbb{N}_{\ge 3}$, 
\begin{equation}\label{newlem2eqn3a}
|A_{\ell}(n)| \le \left(\l^*\sum_{\l\in \mathcal{S}} |A_{\l}(d)|+\frac{\left|c_2(d)\right|}{3}\right)n^{\l^*-\ell}=:\k_d n^{\l^*-\ell}.
\end{equation}
 Using \eqref{newlem2eqn3a}, we bound, for $n\!\gg\!1$ (and $n\ge 2K$, where $K:=\max\{|\a_1|, |\a_2|\}$), 
\begin{align*}
\left|\sum_{\ell\ge 3}A_{\ell}(n)j^\ell\right|&\le \sum_{\ell\ge 3}\left|A_{\ell}(n)\right|K^\ell
\le 2\k_{d}K^{3}n^{\l^*-3}.
\end{align*}
 Consequently, by \ref{genexampleassump2}, and \ref{genexampleassump3}, we obtain that \eqref{jenseneqn1} holds. Using this and \eqref{neweqn9}, $\{a_d(n)\}_{n\ge 0}$ satisfies \ref{GenGORZThm1assump1} with $\a_1\le j\le \a_2$. This concludes the proof of \ref{fact2part1}. 

\noindent \ref{fact2part2}  By \eqref{impfact} and \Cref{Fact1}, we have 
\begin{align}\label{neweqn11}
\log\left(\frac{\d(n+j)}{\d(n)}\right)
&=B(n)j+\beta^2(n)j^2+\sum_{m\ge 3}B_{m}(n)j^m+o(1),
\end{align}
where
\[
B(n):=\left(\frac{\l^*}{2}-1\right)\frac 1n,\quad \beta(n):=\sqrt{\frac 12\left(1-\frac{\l^*}{2}\right)}\frac 1n,\quad B_m(n)=\left(\frac{\l^*}{2}-1\right)\frac{(-1)^{m+1}}{m n^m}.
\]
We have $\lim_{n\to \infty}B(n)=0$ and $\b(n)=o(\d(n))$. Thus \ref{InflogconcaveThmassump2} and \ref{InflogconcaveThmassump3} are satisfied. Note that, for $\a_1\le j\le \a_2$ and $n\!\gg\!1$, 
\[
\sum_{m\ge 3}B_{m}(n)j^m=o(1).
\] 
Using \eqref{neweqn11}, we see that $\{\d(n)\}_{n\ge 0}$ satisfies \ref{InflogconcaveThmassump1} with $\a_1\le j\le \a_2$.\qedhere 
\end{proof}

 By \Cref{fact2} \ref{fact2part1}, $\{a_d(n)\}_{n\ge 0}$ satisfies \ref{GenGORZThm1assump1} with $0\le j\le m$ and \ref{GenGORZThm1assump2}. Consequently, by \Cref{GenGORZcor1}, we obtain the following.

\begin{lemma}\label{sec:Jensenlem1}
	Let $\{a_d(n)\}_{n\ge 0}$ be as in \eqref{genexample} and $m\ge 2$. Then $J^{m,n}_{a_d}$ is hyperbolic for $n\gg 1$.
\end{lemma}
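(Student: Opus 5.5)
The proof is a direct combination of two earlier results, \Cref{fact2} \ref{fact2part1} and \Cref{GenGORZcor1}. Fix $m\ge 2$ and let $\{a_d(n)\}_{n\ge 0}$ be as in \eqref{genexample}, so that \ref{genexampleassump1}--\ref{genexampleassump4} hold.

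First, apply \Cref{fact2} \ref{fact2part1} with the concrete choice $\a_1:=0$ and $\a_2:=m$. This gives that $a_d$ satisfies \ref{GenGORZThm1assump1} for $0\le j\le m$. The sequences are
\[
A(n)=\sum_{\l\in\mathcal{S}}\l A_{\l}(d)n^{\l-1}-\frac{c_2(d)}{n},\qquad \d(n)=\frac{1}{\sqrt2 n}\sqrt{\sum_{\l\in\mathcal{S}}\l(1-\l)A_{\l}(d)n^{\l}-c_2(d)}.
\]
The same part of the lemma also gives \ref{GenGORZThm1assump2}. By \eqref{impfact}, $\d(n)\to0$, and $\d(n)>0$ for $n\gg1$ because $A_{\l^*}(d)>0$ and $0<\l^*<1$.

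Second, these are exactly the hypotheses of \Cref{GenGORZcor1}. That theorem then gives hyperbolicity of $J^{m,n}_{a_d}$ for $n\gg1$. Behind this is \Cref{GenGORZThm1}: the renormalized Jensen polynomials converge locally uniformly to $H_m(\frac x2)$, which has simple real zeros. Note that the rescaling $x\mapsto (\d(n)x-1)e^{-A(n)}$ is a real affine map with nonzero slope for $n\gg1$. It therefore preserves real-rootedness, so hyperbolicity of the limit transfers back to $J^{m,n}_{a_d}$ itself.

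The only point needing care is the error term in \ref{GenGORZThm1assump1}: the $o(1)$ must be uniform over the finitely many $j\in\{0,\dots,m\}$. This holds because there are finitely many $j$ and each term satisfies the bound \eqref{jenseneqn1}. Beyond this I expect no real obstacle, since everything is already contained in \Cref{fact2} and \Cref{GenGORZcor1}.
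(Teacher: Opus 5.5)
Your argument is exactly the paper's own derivation: \Cref{fact2}~\ref{fact2part1} on $[0,m]$, then \Cref{GenGORZcor1}. The step through \Cref{GenGORZcor1} does not hold. The point you flagged is exactly where it breaks, but the problem is the \emph{size} of the error in \ref{GenGORZThm1assump1}, not its uniformity in $j$. An $o(1)$ error only gives $\frac{a(n+j)}{a(n)}e^{-A(n)j}=e^{-\delta^2(n)j^2}(1+\eta_j(n))$ with $\eta_j(n)\to 0$. Then
\[
\frac{J^{m,n}_{a}\left(\frac{\delta(n)x-1}{e^{A(n)}}\right)}{a(n)\delta^{m}(n)}=\frac{1}{\delta^{m}(n)}\sum_{j=0}^{m}\binom{m}{j}e^{-\delta^2(n)j^2}\left(\delta(n)x-1\right)^j+\frac{1}{\delta^{m}(n)}\sum_{j=0}^{m}\binom{m}{j}\eta_j(n)e^{-\delta^2(n)j^2}\left(\delta(n)x-1\right)^j .
\]
The first sum tends to $H_m(\frac x2)$ only because its terms, each of size $O(1)$, cancel down to $O(\delta^m(n))$. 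The second sum has no such cancellation, since the $\eta_j(n)$ depend on $j$. It is generally of size $\delta^{-m}(n)\max_j|\eta_j(n)|$, which need not tend to $0$. The paper's proof of \Cref{GenGORZThm1} hides this in \eqref{Jenseneqn1} by pulling a single common factor $1+o(1)$ out of the sum. What is actually needed is an error $o(\delta^m(n))$, as in \Cref{GORZThm1}.

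In fact the lemma is false when \eqref{genexample} is only a $\sim$-asymptotic. Take the positive sequence $a(n):=e^{\sqrt n}\bigl(1+\frac{(-1)^n}{n+2}\bigr)$. It satisfies \eqref{genexample} with $\mathcal{S}=\{\frac12\}$, $A_{1/2}(d)=c_1(d)=1$, $c_2(d)=0$, yet
\[
\log\frac{a^2(k)}{a(k-1)a(k+1)}=\frac{1}{4k^{3/2}}+(-1)^k\frac{4}{k}+O\left(k^{-2}\right).
\]
This is negative for every large odd $k$, so $J^{2,k-1}_a$ has negative discriminant infinitely often.

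A true statement needs a quantitative hypothesis, for example
$a_d(n)=c_1(d)\exp\bigl(\sum_{\lambda\in\mathcal{S}}A_\lambda(d)n^\lambda\bigr)n^{-c_2(d)}\bigl(1+O(n^{-K})\bigr)$ with $K>(1-\frac{\lambda^*}{2})m$. Then:
\begin{itemize}
\item by \eqref{impfact}, the error in $\log\frac{a_d(n+j)}{a_d(n)}$ is $O(n^{-K})=o(\delta^m(n))$;
\item the coefficients $A_\ell(n)$ from the proof of \Cref{fact2} satisfy $A_\ell(n)=O(n^{\lambda^*-\ell})=o(\delta^\ell(n))$ for $\ell\ge 3$;
\item $\sum_{\ell>m}A_\ell(n)j^\ell=o(\delta^m(n))$.
\end{itemize}
So \Cref{GORZThm1} and \Cref{GORZcor1} apply and give hyperbolicity. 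For the partition-type applications this requires genuine asymptotic expansions (e.g.\ from the circle method), not just the Tauberian $\sim$ of \Cref{sec0prop1}.
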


 Now, we are ready to determine the asymptotics of  $\overline{p}_k(n)$ as $n\to \infty$.

\begin{lemma}\label{overptnlem1}
	Let $k\ge 2$. We have\footnote{\Cref{overptnlem1} corrects typos from \cite[equation (3.18)]{PZZ}.}
	\[
	\overline{p}_k(n)\sim \frac{\left(k-1\right)^{\frac 14}}{2^{\frac 32}k^{\frac 34} }\frac{e^{\pi\sqrt{\frac{k-1}k n}}}{n^{\frac 34}}\ \ \text{ as}\ \ n\to \infty.
	\]
\end{lemma}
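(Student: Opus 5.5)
The plan is to apply the Tauberian result, \Cref{sec0prop1}, to the generating function $P_k(q)$ from \eqref{genfunc1}. It has two inputs: the asymptotics of $P_k(e^{-z})$ as $z\to0$ in cones $R_\Delta$, and monotonicity of $\overline{p}_k(n)$.

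\textbf{Asymptotics at $z\to0$.} First rewrite
\[
P_k(q)=\frac{(q^2;q^2)_\infty (q^k;q^k)_\infty^2}{(q;q)_\infty^2 (q^{2k};q^{2k})_\infty},
\]
using $(-q;q)_\infty=(q^2;q^2)_\infty/(q;q)_\infty$ and the analogue with $q\mapsto q^k$. With $q=e^{-z}$ and $z\in R_\Delta$, each factor $(q^a;q^a)_\infty$ has $az\in R_\Delta$. So \Cref{sec0lem1} gives
\[
(q^a;q^a)_\infty\sim\sqrt{2\pi/(az)}\,e^{-\pi^2/(6az)}.
\]
Multiplying the five factors, the power of $z$ cancels: the count is $(1+2-2-1)/2\cdot(-1)=0$. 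The prefactor is
\[
\frac{\sqrt{1/2}\,(1/k)}{1\cdot\sqrt{1/(2k)}}=k^{-1/2}.
\]
The exponent is $\frac{\pi^2}{6z}\bigl(-\tfrac12-\tfrac2k+2+\tfrac1{2k}\bigr)=\frac{\pi^2(k-1)}{4kz}$. Hence
\[
P_k(e^{-z})\sim k^{-1/2}e^{\gamma/z},\qquad \gamma=\frac{\pi^2(k-1)}{4k},
\]
uniformly in each $R_\Delta$, so $\lambda=k^{-1/2}$ and $\beta=0$. Since $\operatorname{Re}(1/z)\le 1/|z|$, this also gives the upper bound $P_k(e^{-z})\ll e^{\gamma/|z|}$ required in \eqref{sec0prop1eqn1}. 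For real $t\to0^+$ it gives $B(e^{-t})\sim\lambda e^{\gamma/t}$.

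\textbf{Monotonicity.} This is the main obstacle, and I would handle it with \Cref{elem}. Write
\[
(1-q)P_k(q)=\frac{(-q;q)_\infty}{(q^2;q)_\infty}\cdot\frac{(q^k;q^k)_\infty}{(-q^k;q^k)_\infty}.
\]
The factor $(q^k;q^k)_\infty/(q;q)_\infty$ generates partitions with no part divisible by $k$. Multiplying by $1-q$ deletes the part $1$, which is allowed since $k\ge2$, leaving $1/\prod_{n\ge2,\,k\nmid n}(1-q^n)$; this has nonnegative coefficients. The remaining factor $(-q;q)_\infty/(-q^k;q^k)_\infty=\prod_{k\nmid n}(1+q^n)$ also has nonnegative coefficients. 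So $(1-q)P_k(q)$ has nonnegative coefficients, and $\overline{p}_k(0)=1$. By \Cref{elem}, $\overline{p}_k(n)$ is weakly increasing.

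\textbf{Conclusion.} \Cref{sec0prop1} now gives
\[
\overline{p}_k(n)\sim\frac{k^{-1/2}\gamma^{1/4}}{2\sqrt\pi\, n^{3/4}}e^{2\sqrt{\gamma n}}.
\]
Here $2\sqrt{\gamma n}=\pi\sqrt{(k-1)n/k}$. Using $\gamma^{1/4}=\pi^{1/2}(k-1)^{1/4}/(\sqrt2\,k^{1/4})$, the constant becomes
\[
\frac{k^{-1/2}\,\pi^{1/2}(k-1)^{1/4}}{2\sqrt\pi\,\sqrt2\,k^{1/4}}=\frac{(k-1)^{1/4}}{2^{3/2}k^{3/4}},
\]
which is the claimed asymptotic.
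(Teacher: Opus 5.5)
Your proposal is correct and follows the same route as the paper. You rewrite $P_k$ as a quotient of $(q^a;q^a)_\infty$'s and apply \Cref{sec0lem1} factor by factor to get $\lambda=k^{-1/2}$, $\beta=0$, $\gamma=\frac{(k-1)\pi^2}{4k}$. You prove monotonicity via \Cref{elem} using the same factorization $(1-q)P_k(q)=\prod_{k\nmid n}(1+q^n)\prod_{n\ge 2,\,k\nmid n}(1-q^n)^{-1}$, and you conclude with \Cref{sec0prop1}. Your explicit justification of the bound in each $R_\Delta$ via $\operatorname{Re}(1/z)\le 1/|z|$ fills in a step the paper leaves implicit.
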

\begin{proof} 
We rewrite \eqref{genfunc1} as 
\begin{equation*} 
P_k(q)=\sum_{n\ge 0}\overline{p}_k(n)q^n=\frac{\left(q^2;q^2\right)_{\infty}\left(q^{k};q^{k}\right)^2_{\infty}}{\left(q\right)^2_{\infty}\left(q^{2k};q^{2k}\right)_{\infty}}.
\end{equation*}
By \Cref{sec0lem1}, as $z\to 0$ in $R_{\Delta}$ ($\Delta\ge 0$), 
	\[
	P_k(e^{-z})\sim 
	\frac{e^{\frac{(k-1)\pi^2}{4kz}}}{\sqrt k}.
	\]
	Thus $P_k(q)$ satisfies \eqref{sec0prop1eqn1} with $\l=\frac{1}{\sqrt{k}}$, $\b=0$, and $\g=\frac{(k-1)\pi^2}{4k}$. Next, note that $\overline{p}_k(0)=1$. It remains to show that $\{\overline{p}_k(n)\}_{n\ge 0}$ is weakly increasing. Using \eqref{genfunc1}, we obtain  
	\begin{align*}
\left(1-q\right)P_k(q)
=\frac{\prod_{1\le j\le k-1}\left(-q^j;q^k\right)_{\infty}}{\left(q^{k+1};q^k\right)_{\infty}\prod_{2\le j\le k-1}\left(q^j;q^k\right)_{\infty}}.
	\end{align*}
Thus $(1-q)P_k(q)$ has non-negative Fourier coefficients and thus, by \Cref{elem}, $\overline{p}_k(n)\ge \overline{p}_k(n-1)$ for $n\in \mathbb{N}$. The claim then follows, using \Cref{sec0prop1}.\qedhere  
\end{proof}

Next, we determine the asymptotic main term of $b_k(n)$ as $n\to \infty$. Similar to the proof of \Cref{overptnlem1}, using \eqref{genfunc2}, \Cref{sec0lem1}, \Cref{elem}, and \Cref{sec0prop1}, we obtain the following.

\begin{lemma}\label{brokenlem1}
	For $k\in \mathbb{N}$, we have 
	\[
	b_k(n)\sim \frac{(5k+2)^{\frac 34}}{2^{\frac{11}4} 3^{\frac 34} (2k+1)^{\frac 34}}\frac{e^{\pi\sqrt{\frac{2(5k+2)}{3(2k+1)}n}}}{n^{\frac 54}}\ \ \text{ as}\ \ n\to \infty.
	\]
\end{lemma}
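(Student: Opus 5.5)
We prove \Cref{brokenlem1} exactly as \Cref{overptnlem1}: we feed the Tauberian result \Cref{sec0prop1} with $\b=0$, so we only need the constants $\l$ and $\g$ and a monotonicity check.

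\emph{Modular asymptotics.} Since $(-q;q)_\infty=\frac{(q^2;q^2)_\infty}{(q;q)_\infty}$ and $(-q^{2k+1};q^{2k+1})_\infty=\frac{(q^{4k+2};q^{4k+2})_\infty}{(q^{2k+1};q^{2k+1})_\infty}$, we rewrite \eqref{genfunc2} as
\[
B_k(q)=\frac{(q^2;q^2)_\infty\,(q^{2k+1};q^{2k+1})_\infty}{(q;q)^3_\infty\,(q^{4k+2};q^{4k+2})_\infty}.
\]
We apply \Cref{sec0lem1} with $z$ replaced by $2z$, $(2k+1)z$ and $(4k+2)z$; these lie in the same region $R_\Delta$. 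The powers of $\sqrt{2\pi/z}$ contribute
\[
\frac{2^{-1/2}(2k+1)^{-1/2}}{(2\pi/z)\,(4k+2)^{-1/2}}=\frac{z}{2\pi},
\]
so $\l=\frac1{2\pi}$ and $\b=1$. The exponent is $\frac{\pi^2}{6z}\big(3-\frac12-\frac1{2k+1}+\frac1{4k+2}\big)$, which gives
\[
\g=\frac{\pi^2(5k+2)}{6(2k+1)}.
\]
This $\g$ matches the exponent $2\sqrt{\g n}=\pi\sqrt{\frac{2(5k+2)}{3(2k+1)}n}$ in the claim.

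Plugging into \Cref{sec0prop1}, the exponent of $n$ is $\frac\b2+\frac34=\frac54$, as claimed. The constant is
\[
\frac{\l\g^{3/4}}{2\sqrt\pi}=\frac{1}{4\pi^{3/2}}\cdot\frac{\pi^{3/2}(5k+2)^{3/4}}{6^{3/4}(2k+1)^{3/4}}=\frac{(5k+2)^{3/4}}{2^{11/4}3^{3/4}(2k+1)^{3/4}},
\]
which is the constant in the lemma. The same lemma also gives the upper bound $B_k(e^{-z})\ll|z|e^{\g/|z|}$ in each $R_\Delta$, since $\operatorname{Re}(1/z)\le 1/|z|$.

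\emph{Monotonicity.} We have $b_k(0)=1$, so by \Cref{elem} it suffices to show that $(1-q)B_k(q)$ has nonnegative coefficients. Write
\[
(1-q)B_k(q)=\frac{(-q;q)_\infty}{(q^2;q)_\infty\,(q;q)_\infty\,(-q^{2k+1};q^{2k+1})_\infty}.
\]
The factor $1/(q^2;q)_\infty$ has nonnegative coefficients. It remains to show that
\[
\frac{(-q;q)_\infty}{(q;q)_\infty(-q^{2k+1};q^{2k+1})_\infty}=\frac{\prod_{n:\,(2k+1)\nmid n}(1+q^n)}{(q;q)_\infty}
\]
does, and this holds because both the numerator and $1/(q;q)_\infty$ are products of series with nonnegative coefficients. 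Hence $(1-q)B_k(q)$ has nonnegative coefficients, $b_k(n)$ is weakly increasing, and \Cref{sec0prop1} yields the lemma.

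\emph{Main obstacle.} The only delicate step is this nonnegativity: the factor $(-q^{2k+1};q^{2k+1})_\infty$ in the denominator has to be cancelled against $(-q;q)_\infty$ before the factor $1-q$ is split off. Everything else is routine bookkeeping of constants.
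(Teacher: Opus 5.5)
Your proposal is correct and follows the paper's own route: the product asymptotics from \Cref{sec0lem1}, monotonicity via \Cref{elem} after cancelling $(-q^{2k+1};q^{2k+1})_\infty$ against $(-q;q)_\infty$, and then \Cref{sec0prop1}. Your constants $\l=\frac{1}{2\pi}$, $\b=1$, $\g=\frac{\pi^2(5k+2)}{6(2k+1)}$ check out; the only slip is your opening claim that $\b=0$ (copied from the $\overline{p}_k$ case), which contradicts your own correct computation $\b=1$ that yields the factor $n^{-5/4}$.
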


\begin{proof}[Proof of \Cref{ConjThm1}]
	By \Cref{overptnlem1} and \Cref{sec:Jensenlem1}, we conclude the proof of Conjecture \ref{conj0}. By \Cref{brokenlem1} and \Cref{sec:Jensenlem1} of the paper, we obtain Conjecture \ref{conj1}.\qedhere   
\end{proof}

\section{Proofs of Theorems \ref{genthm}\textendash\ref{Lagineq}}\label{secgen}
 
Let $P^{[M_{\ell}]}_{\rho_{\ell},s_{\ell}}(a)(n)$ be as in \eqref{gendefn}. Define
\begin{align}\label{gendefn1}
\mathcal{D}^{[\bm{M}]}_{\bm{\rho},\bm{s}}(a)(n)
&:=P^{[M_1]}_{\rho_1,s_1}(a)(n)-P^{[M_2]}_{\rho_2,s_2}(a)(n),
\end{align}
We next explain how known examples fit into this framework. 
\begin{enumerate}[
	label=(E\arabic*),
	wide,
	labelwidth=!,
	labelindent=0pt
	]

\item {\bf Log-concavity}

\noindent \label{LC}  Shifting $n\mapsto n+1$, 
the left-hand side of \eqref{log-concave} fits into the above framework with $T=2, M_1=M_2=1$, $\rho_1(1)=\rho_2(1)=1$, $s_1(1,1)=
s_1(2,1)=1$, $s_2(1,1)=0$, $s_2(2,1)=2$, and $N=1$.  
\vspace{0.2 cm}
 
\item {\bf Higher order Tur\'{a}n inequality}  

\noindent \label{HT} Expanding the left-hand side of \eqref{HigherOrderTuranDef}, we have
\begin{align*}
\hspace{1 cm}3a^2(n)a^2(n+1)+&6a(n-1)a(n)a(n+1)a(n+2)-4a^3(n)a(n+2)-4a(n-1)a^3(n+1)\\
&\hspace{7 cm}-a^2(n-1)a^2(n+2).
\end{align*}
We are in the above framework with $T=4$, $M_1=2$, $M_2=3$, $\rho_1(1)=3$, $\rho_1(2)=6$, $\rho_2(1)=\rho_2(2)=4$, $\rho_2(3)=1$, $s_1(1,1)=s_1(2,1)=0$, $s_1(3,1)=s_1(4,1)=1$, $s_1(1,2)=-1$,  $s_1(2,2)=0$, $s_1(3,2)=1$, $s_1(4,2)=2$, $s_2(1,1)=s_2(2,1)=s_2(3,1)=0$, $s_2(4,1)=2$, $s_2(1,2)=-1$, $s_2(2,2)=s_2(3,2)=s_2(4,2)=1$, $s_2(1,3)=s_2(2,3)=-1$, $s_2(3,3)=s_2(4,3)=2$, and $N=3$. 

\vspace{0.2 cm}

\item {\bf Laguerre inequality of order two}	

\noindent \label{L2} By \eqref{Lagineq} with $r=2$, we have 
\begin{align*}
L_2(a)(n)=a(n)a(n+4)\!+\!3a^2(n+2)\!-\!4a(n+1)a(n+3).
\end{align*}
 This fits into the above framework with $T=2$, $M_1=2$, $M_2=1$, $\rho_1(1)=1$, $\rho_1(2)=3$, $\rho_2(1)=4$, $s_1(1,1)=0$, $s_1(2,1)=4$, $s_1(1,2)=s_1(2,2)=2$, $s_2(1,1)=1$, $s_2(2,1)=3$, and $N=2$.

\vspace{0.2 cm} 

\item {\bf Briggs' inequality} 

\noindent \label{Briggsineq3} By \cite[equation (3)]{LZ}, to prove \eqref{Briggsineq}, it suffices to show that \eqref{log-concave} holds and 
\begin{equation}\label{Briggsineq2}
B(a)(n):=a^2(n)a(n+1)+a(n-1)a(n)a(n+2)-2a(n-1)a^2(n+1)>0.
\end{equation}
This fits in the above framework with $T=3$, $M_1=2$, $M_2=1$, $\rho_1(1)=\rho_1(2)=1$, $\rho_2(1)=2$, $s_1(1,1)=s_1(2,1)=0$, $s_1(3,1)=1$, $s_1(1,2)=-1$, $s_1(2,2)=0$, $s_1(3,2)=2$, $s_2(1,1)=-1$, $s_2(2,1)=s_2(3,1)=1$, and $N=2$. 
\end{enumerate}

\begin{proof}[Proof of \Cref{genthm}]
  Note that 
 $\mathcal{C}\in \mathbb{N}$. 
For $\ell\in \{1,2\}$, $1\le m\le M_{\ell}$ and $N\in \mathbb{N}$, set 
	\[w_{s_{\ell},m}(n):=-\d^2(n)\sum_{k=1}^{T}s^2_{\ell}(k,m),\quad \phi_N(x):=\sum_{m=1}^{N}\frac{x^m}{m!}.\]
We first assume that $T\ge 2$. Using \eqref{gendefn} and \ref{GenGORZThm1assump1} with $|j|\le \mathcal{C}$, we have, for $\ell\in \{1,2\}$, 
	\begin{align}\label{genthmeqn1}
	\frac{P^{[M_{\ell}]}_{\rho_{\ell},s_{\ell}}(a)(n)}{a^{T}(n)}\!&=\!
\left(1\!+\!o_T\left(1\right)\right)\!\sum_{m=1}^{M_{\ell}}\!\rho_{\ell}(m)e^{A(n)\!\sum_{k=1}^{T}\!s_{\ell}(k,m)}
\left(\!1\!+\!\phi_{N}\!\left(w_{s_{\ell},m}(n)\right)\!+\!\!\!\!\sum_{t\ge N+1}\!\frac{w^t_{s_{\ell},m}(n)}{t!}\!\right).
	\end{align}
	Bounding $w_{s_{\ell},m}(n)\ll_{C,T} \d^2(n)$, 
	we have, for $n\!\gg\!1$,  
	\begin{align*}
	\left|\sum_{t\ge N+1}\frac{w^t_{s_{\ell},m}(n)}{t!}\right|
	\le \frac{\left|w^{N+1}_{s_{\ell},m}(n)\right|}{\left(N+1\right)!}\sum_{t\ge 0}\frac{1}{t!}\ll_{\mathcal{C},T,N}\d^{2N+2}(n).	\end{align*}
Plugging 
this into \eqref{genthmeqn1}, using \ref{genthmassump2} and \eqref{gendefn}, we have, for $n\!\gg\!1$, 
\begin{align}\label{genthmeqn15}
I^{[\bm{M}]}_{\bm{\rho},\bm{s}}(a)(n)=\left(1\!+\!o_{T}\left(1\right)\right)\Bigl(1+O_{\mathcal{C}, T}\left(A(n)\right)\Bigr)\sum_{\ell=1}^{2}T^{[\ell]}_{\bm{M},N}(n),
\end{align}
where 
	\begin{align*}
	T^{[1]}_{\bm{M},N}(n)&:=\sum_{\ell=1}^2 (-1)^{\ell+1}\sum_{m=1}^{M_{\ell}}\!\rho_{\ell}(m)\phi_N\!\left(-\d^2(n)\!\sum_{k=1}^{T}\!s^2_{\ell}(k,m)\right),\\
	T^{[2]}_{\bm{M},N}(n)&:=\Bigl(1+O_{\mathcal{C},T,N}\left(\d^{2N+2}(n)\right)\Bigr)D_0.
	\end{align*}
Using  \ref{assump0}, we have, for $n\!\gg\!1$, 
	\begin{equation}\label{genthmeqn16}
	T^{[2]}_{\bm{M}, N}(n)=0.
	\end{equation}
 Finally, using \eqref{newdef3}, \ref{assump0}, and \ref{assump1}, we have 
	\begin{align*}
	T^{[1]}_{\bm{M},N}(n)=\frac{(-1)^ND_N}{N!}\d^{2N}(n).
	\end{align*}
	Plugging this and \eqref{genthmeqn16} 
	into \eqref{genthmeqn15} and using \ref{genthmassump2} we have, for $T\ge 2$, as $n\to \infty$,
	\begin{align*}
	&I^{[\bm{M}]}_{\bm{\rho},\bm{s}}(a)(n)\sim \frac{(-1)^{N}D_N}{N!}\d^{2N}(n).
	\end{align*}	
 This, together with \eqref{newdef2}, concludes the proof of \Cref{genthm} for $T\ge 2$.

Next we let $T=1$. For $\ell\in \{1,2\}$, using \eqref{gendefn}, \ref{GenGORZThm1assump1} with $|j|\le \mathcal{C}$, \ref{GenGORZThm1assump2}, and \ref{genthmassump2}, we have,
\[
\frac{P^{[M_{\ell}]}_{\rho_{\ell},s_{\ell}}(a)(n)}{a(n)}=\left(1\!+\!o_{\mathcal{C}}\left(1\right)\right)\! \sum_{m=1}^{M_{\ell}}\!\rho_{\ell}(m)\!\left(1\!+\!\phi_N\left(A(n)s_{\ell}(1,m)\right)\!+\!O_{\mathcal{C}}\left(A^{2}(n)\right)\right)\ \ \left(\text{as}\ n\to \infty\right).
\]
Thus, using \eqref{gendefn}, \ref{assump0}, and \eqref{newdef3}, we have, as $n\to \infty$,
\[
I^{[\bm{M}]}_{\bm{\rho},\bm{s}}(a)(n)\sim \frac{D_N}{N!}A^N(n).
\]
 Combining this and using \eqref{newdef2}, the claim follows.\qedhere 
\end{proof}

\begin{proof}[Proof of \Cref{Corgenthm}]
 By \Cref{fact2} \ref{fact2part1}, $\{a_d(n)\}_{n\ge 0}$ satisfies \ref{GenGORZThm1assump1} with $|j|\le \mathcal{C}$, \ref{GenGORZThm1assump2}, \ref{genthmassump2}. Using \eqref{gendefn1}, \eqref{gendefn}, and \Cref{genthm}, we have
 \begin{equation}\label{nclaim3}
\frac{\mathcal{D}^{[\bm{M}]}_{\bm{\rho},\bm{s}}(a_d)(n)}{a^T_d(n)}\sim \frac{D_N}{N!}\mathcal{M}^N(n). 
 \end{equation}

\noindent \ref{Corgenthmeqn1}  By \eqref{nclaim3}, \eqref{newdef2}, and \eqref{impfact}, we have, as $n\to \infty$, 
\[
\frac{\mathcal{D}^{[\bm{M}]}_{\bm{\rho},\bm{s}}(a_d)(n)}{a^T_d(n)}\sim \frac{\left(-\l^*(1-\l^*)A_{\l^*}(d)\right)^N\!D_N }{2^NN!}n^{\left(\l^*-2\right)N}.
\]
Using this and \eqref{genexample}, we conclude the proof of \ref{Corgenthmeqn1}. 

\noindent \ref{Corgenthmeqn2} \hspace{-0.05 cm}  The proof is analogous to that of \ref{Corgenthmeqn1}, using \eqref{nclaim3}, \eqref{newdef2}, \eqref{obs4}, and \eqref{genexample}.\qedhere 
\end{proof}

As an application of \Cref{genthm}, we have the following result.

\begin{corollary}\label{genthmcor1}
Let $\{a_d(n)\}_{n\ge 0}$ be as in \eqref{genexample}. Then $a_d(n)$ satisfies \eqref{Briggsineq} for $n\!\gg\!1$.
\end{corollary}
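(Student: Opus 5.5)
Following (E4) and the reduction quoted there from \cite[equation (3)]{LZ}, \eqref{Briggsineq} for $a_d(n)$ follows once we know, for $n\gg1$, that $a_d$ is log-concave and that $B(a_d)(n)>0$, with $B$ as in \eqref{Briggsineq2}. Both will come from \Cref{Corgenthm}\ref{Corgenthmeqn1}, since each expression is an instance of $\mathcal{D}^{[\bm{M}]}_{\bm{\rho},\bm{s}}$ with $T\ge2$. Its right-hand side has sign $(-1)^N\operatorname{sgn}(D_N)$: the factor $A_{\l^*}(d)>0$ by \ref{genexampleassump3}, $0<\l^*<1$ by \ref{genexampleassump2}, and $c_1(d)>0$ by \ref{genexampleassump4}. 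So the task reduces to computing $D_t$ for small $t$ in the two examples (E1) and (E4), with $u_T=2$.

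For log-concavity (E1), after the shift $n\mapsto n+1$: we have $\sum_k s^2=1+1=2$ for the first product and $0+4=4$ for the second. Hence $D_0=1-1=0$ and $D_1=2-4=-2\neq0$, so $N=1$. Then $(-1)^N D_N=2>0$, and the corollary gives $a_d^2(n)-a_d(n-1)a_d(n+1)>0$ for $n\gg1$. (Alternatively, this also follows from \Cref{sec:Jensenlem1} with $m=2$.)

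For (E4), $T=3$ and the squared-shift sums are $0+0+1=1$ and $1+0+4=5$ on the positive side and $1+1+1=3$ with weight $2$ on the negative side. Then
\[
D_0=1+1-2=0,\quad D_1=1+5-6=0,\quad D_2=1+25-18=8\neq0,
\]
so $N=2$, consistent with what the paper records. The sign is $(-1)^2\cdot 8>0$, so \Cref{Corgenthm}\ref{Corgenthmeqn1} gives $B(a_d)(n)>0$ for $n\gg1$.

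Combining the two, via \cite[equation (3)]{LZ}, yields \eqref{Briggsineq} for $n\gg1$. The hypotheses of \Cref{genthm} (conditions \ref{GenGORZThm1assump1}--\ref{genthmassump2} with $|j|\le\mathcal{C}=2$) are supplied by \Cref{fact2}\ref{fact2part1}. The main point to check is the exact form of the LZ reduction: whether it requires strict positivity and exactly which index shift of $B$ is needed. If a shift by $n\mapsto n\pm1$ is involved, it is harmless, because the conclusions hold for all $n\gg1$.
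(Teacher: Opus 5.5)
Your outline is the paper's own proof. You reduce via (E4) to $B(a_d)(n)>0$ plus log-concavity (the paper takes log-concavity from \Cref{sec:Jensenlem1} rather than from (E1)), compute $D_0=D_1=0$, $D_2=8$, and read off the sign from \Cref{Corgenthm}\ref{Corgenthmeqn1}. The reduction you flagged is harmless, because it is the exact identity
\begin{multline*}
a^2(n)\bigl(a^2(n)-a(n-1)a(n+1)\bigr)-a^2(n-1)\bigl(a^2(n+1)-a(n)a(n+2)\bigr)\\
=a(n-1)B(a)(n)+\bigl(a^2(n)-a(n-1)a(n+1)\bigr)^2,
\end{multline*}
so $B(a)(n)>0$ alone suffices. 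The genuine gap, which you share with the paper, is the appeal to \Cref{Corgenthm}, i.e.\ to \Cref{genthm}. The quantity $B(a_d)(n)/a_d^3(n)$ is what survives cancellation among terms of size $1$, so its sign requires each term to an accuracy finer than the survivor. \Cref{genthm} does not supply that accuracy. First, \eqref{genexample} together with \Cref{Fact1} gives \ref{GenGORZThm1assump1} only with an $o(1)$ error, and the proof of \Cref{genthm} pulls a common factor $1+o(1)$ out of a cancelling sum. Since $A(n),\delta(n)\to0$, conditions \ref{GenGORZThm1assump1}--\ref{genthmassump2} say nothing beyond $a(n+j)/a(n)\to1$. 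For example, $a\equiv1$, $A=0$, $\delta(n)=1/n$ satisfies them, and \Cref{genthm} applied to (E1) would then assert $0\sim 2n^{-2}$. In fact the corollary is false as stated: $a(n)=e^{\sqrt n}\bigl(1+\frac{(-1)^n}{n+2}\bigr)$ satisfies \eqref{genexample} with $\mathcal{S}=\{\frac12\}$, $A_{1/2}(d)=c_1(d)=1$, $c_2(d)=0$. Yet for odd $n$ the left-hand side minus the right-hand side of \eqref{Briggsineq} equals $-(8+o(1))n^{-1}e^{4\sqrt n}$.

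Second, suppose $a_d(n)=e^{f(n)}$ exactly, with $f(x):=\log c_1(d)+\sum_{\lambda\in\mathcal{S}}A_\lambda(d)x^\lambda-c_2(d)\log x$. Even then \ref{GenGORZThm1assump1} discards the terms $\sum_{\ell\ge3}A_\ell(n)j^\ell$, and \ref{assump0} does not kill them here. The cubic analogue of $D_1$, namely $\sum_{\ell=1}^{2}(-1)^{\ell+1}\sum_m\rho_\ell(m)\sum_k s_\ell^3(k,m)$, equals $1+7-2=6$. A Taylor expansion gives
\[
\frac{B(a_d)(n)}{a_d^3(n)}=e^{f'(n)}\Bigl(f'''(n)+f''(n)^2+O\bigl(n^{\lambda^*-4}\bigr)\Bigr).
\]
Here $f''(n)^2=4\delta^4(n)\asymp n^{2\lambda^*-4}$ is the main term that you and the paper obtain. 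But $f'''(n)\sim\lambda^*(1-\lambda^*)(2-\lambda^*)A_{\lambda^*}(d)n^{\lambda^*-3}$ dominates it, by a factor $6\sqrt n$ for $e^{\sqrt n}$. So the asserted asymptotic for $B(a_d)(n)$ is wrong, and the sign comes out right only because $f'''>0$. A correct proof needs a quantitative hypothesis in place of $\sim$ in \eqref{genexample}, for instance $\log a_d(n)=f(n)+\varepsilon(n)$ with $\varepsilon(n)=o(n^{\lambda^*-3})$. The third-order expansion above then yields $B(a_d)(n)\sim\lambda^*(1-\lambda^*)(2-\lambda^*)A_{\lambda^*}(d)n^{\lambda^*-3}a_d^3(n)>0$. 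For $p_k$ and $\overline{p}_k$, such control would have to come from exact formulas or complete asymptotic expansions, not from leading-order asymptotics alone.
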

\begin{proof}
 To prove \eqref{Briggsineq}, it suffices to show that $\{a_d(n)\}_{n\ge 0}$ satisfies \eqref{log-concave} and \eqref{Briggsineq2} for $n\!\gg\!1$. By \Cref{sec:Jensenlem1}, \eqref{log-concave} holds for $\{a_d(n)\}_{n\ge 0}$ for $n\!\gg\!1$.  From \ref{Briggsineq3}, 
  we have, using \eqref{newdef3}, $D_2=8$. 
Moreover, by \Cref{fact2} \ref{fact2part1}, $\{a_d(n)\}_{n\ge 0}$ satisfies \ref{GenGORZThm1assump1} with $|j|\le 2$, \ref{GenGORZThm1assump2}, and \ref{genthmassump2}. Thus by \Cref{Corgenthm} \ref{Corgenthmeqn1}, we have, as $n\to \infty$, 
	\begin{align*}
	B\left(a_d(n)\right)
	&\sim \left(\l^*(1-\l^*)A_{\l^*}(d)\right)^2c^3_1(d)\frac{\exp\left(3\sum_{\l\in \mathcal{S}}A_{\l}(d)n^{\l}\right)}{n^{3c_2(d)+2\left(2-\l^*\right)}}.
	\end{align*}
	This implies that $\{a_d(n)\}_{n\ge 0}$ satisfies \eqref{Briggsineq2} for $n\!\gg\!1$ and thus concludes the proof.\qedhere  
\end{proof}

\begin{proof}[Proof of \Cref{Briggsptnoverptnk}]
 Using the asymptotics of $p_k(n)$ (see \cite[equation (2.1)]{CP}),  \Cref{overptnlem1}, and \Cref{genthmcor1}, we conclude the proof.
\end{proof}

\begin{proof}[Proof of \Cref{newlem1}]
Using \eqref{Lagineq}, we write 
\begin{align*}
&2(-1)^rL_r(a)(n)
=\!\sum_{m=1}^{r+1}\!\binom{2r}{2m-2}\!a(n+2m-2)a(n+2r-2m+2)\nonumber\\[-10 pt]
&\hspace{7 cm}-\sum_{m=1}^{r}\binom{2r}{2m-1}a(n+2m-1)a(n+2r-2m+1).
\end{align*}
 In the notation of \eqref{gendefn1} and \eqref{gendefn}, $T=2$, $M_1=r+1$, $M_2=r$, $\rho_1(m)=\smallbinom{2r}{2m-2}$, $\rho_2(m)=\smallbinom{2r}{2m-1}$, $s_1(1,m)=2m-2$, $s_1(2,m)=2r-2m+2$, $s_2(1,m)=2m-1$, and $s_2(2,m)=2r-2m+1$. 
Then $\mathcal{C}=2r$. Next, we show that \ref{assump0} and \ref{assump1} are satisfied with $N=r$. We have
\begin{align}\label{obs1}
&
D_t=\sum_{\ell=0}^{t}\binom{t}{\ell}\sum_{k=0}^{2t-2\ell}\binom{2t-2\ell}{k}(-2r)^k\sum_{m=0}^{2r}(-1)^m\binom{2r}{m}m^{2t-k}.
\end{align}
 By \Cref{useful}, this vanishes for $t<r$. Using \Cref{useful}, \eqref{obs1} equals $2^r(2r)!$ for $t=r$. Thus \ref{assump0} and \ref{assump1} hold with $N=r$ and $D_r=2^r(2r)!$. This concludes the proof.\qedhere 
\end{proof}

 Consequently, by \Cref{fact2} \ref{fact2part1}, \Cref{newlem1}, \eqref{genexample}, and \eqref{impfact}, we have the following. 

\begin{corollary}\label{newcor1}
Let $\{a_d(n)\}_{n\ge 0}$ be as in \eqref{genexample}. We have, as $n\to \infty$, 
\[
L_r(a_d)(n)\sim (2r-1)!!\frac{c_1^2(d)}{2}\left(2\l^*(1-\l^*)A_{\l^*}(d)\right)^r\frac{\exp\left(2\sum_{\l\in \mathcal{S}}A_{\l}(d)n^{\l}\right)}{n^{2c_2(d)+(2-\l^*)r}}.
\]
\end{corollary}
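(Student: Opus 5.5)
Corollary \ref{newcor1} follows by combining \Cref{newlem1} with the explicit asymptotics \eqref{impfact} and \eqref{genexample}.

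By \Cref{fact2} \ref{fact2part1}, the sequence $\{a_d(n)\}_{n\ge 0}$ satisfies \ref{GenGORZThm1assump1} for all bounded ranges of $j$, in particular for $|j|\le 2r$, together with \ref{GenGORZThm1assump2} and \ref{genthmassump2}. Here $A(n)$ and $\d(n)$ are as in the proof of that lemma. Hence \Cref{newlem1} applies and gives, as $n\to\infty$,
\[
L_r(a_d)(n)\sim (2r-1)!!\,2^{2r-1}\d^{2r}(n)a_d^2(n).
\]

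Next, I substitute the two asymptotics. From \eqref{impfact},
\[
\d^{2r}(n)\sim \left(\frac{\l^*(1-\l^*)A_{\l^*}(d)}{2}\right)^r n^{(\l^*-2)r}.
\]
From \eqref{genexample},
\[
a_d^2(n)\sim c_1^2(d)\exp\left(2\sum_{\l\in\mathcal{S}}A_\l(d)n^\l\right)n^{-2c_2(d)}.
\]
Asymptotic equivalence is preserved under finite products and powers, so I can multiply these.

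It remains to collect the constants:
\[
2^{2r-1}\cdot 2^{-r}\left(\l^*(1-\l^*)A_{\l^*}(d)\right)^r=\frac12\left(2\l^*(1-\l^*)A_{\l^*}(d)\right)^r,
\]
since $2^{2r-1}2^{-r}=2^{r-1}=\frac{1}{2}\,2^r$. This factor of $\frac12$ combines with $c_1^2(d)$ to give $\frac{c_1^2(d)}{2}$. The powers of $n$ give $n^{-2c_2(d)-(2-\l^*)r}$, which is the stated formula.

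There is no real obstacle. The only points to check are that the hypothesis range $|j|\le 2r$ in \Cref{newlem1} is covered by \Cref{fact2}, and that the powers of $2$ combine correctly as above.
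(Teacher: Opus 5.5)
Your chain of citations is exactly the paper's proof: \Cref{fact2}~\ref{fact2part1}, then \Cref{newlem1}, then \eqref{impfact} and \eqref{genexample}. Your collection of constants, $2^{2r-1}\cdot 2^{-r}=\frac12\cdot 2^r$, is correct. However, the step you call unproblematic, ``hence \Cref{newlem1} applies'', is a genuine gap, and the paper has the same gap.

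Condition \ref{GenGORZThm1assump1} only carries an error $o(1)$. So each of the $2r+1$ ratios $a_d(n+m)a_d(n+2r-m)/a_d^2(n)$ is known only up to its own factor $1+o(1)$, i.e.\ up to an additive $o(1)$. By contrast, the alternating sum $2(-1)^rL_r(a_d)(n)/a_d^2(n)$ is claimed to be of exact order $\delta^{2r}(n)\asymp n^{-(2-\lambda^*)r}\to 0$. Nothing makes these separate errors cancel. The proof of \Cref{genthm} conceals this in \eqref{genthmeqn1} by pulling one common factor $1+o(1)$ out of a sum whose terms carry different errors, and the bare relation $\sim$ in \eqref{genexample} gives no control over them.

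In fact the statement is false as written. Take $a(n):=e^{\sqrt n}\bigl(1+\frac{(-1)^n}{n+2}\bigr)$. It is positive and satisfies \eqref{genexample} with $\mathcal S=\{\frac12\}$, $A_{1/2}=c_1=1$, $c_2=0$. For $r=1$ the corollary predicts $L_1(a)(n)=a^2(n+1)-a(n)a(n+2)\sim\frac14 e^{2\sqrt n}n^{-\frac32}>0$. Yet for even $n$ one computes $L_1(a)(n)=e^{\sqrt n+\sqrt{n+2}}\bigl(-\frac4n+O(n^{-\frac32})\bigr)<0$.

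To close the gap you need a quantitative form of \ref{GenGORZThm1assump1}, as in \Cref{GORZThm1}. The error must be $o(\delta^{2r}(n))$. The higher Taylor coefficients $A_k(n)$ of $j\mapsto\log(a(n+j)/a(n))$ must satisfy $A_k(n)=o(\delta^k(n))$ for $k\ge 3$.

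For $a_d$ the second condition is automatic, since the proof of \Cref{fact2} gives $A_k(n)=O(n^{\lambda^*-k})$. The first holds if, for instance, $a_d(n)$ divided by the main term of \eqref{genexample} has an asymptotic expansion in (possibly fractional) powers of $n^{-1}$ with remainder $O(n^{-K})$, where $K>(2-\lambda^*)r$. Rademacher-type formulas provide exactly this for $p$, $\overline p$, $b_k$, and the other examples used in the paper.

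Under such a hypothesis the argument goes through. Write $m=r+u$. Then the exponent of $a_d(n+m)a_d(n+2r-m)/a_d^2(n)$ is even in $u$. Its $u^2$-coefficient is $-2\delta^2(n)(1+o(1))$, and its $u^{2k}$-coefficients are $o(\delta^{2k}(n))$ for $k\ge 2$. \Cref{useful} then isolates $2(-1)^rL_r(a_d)(n)/a_d^2(n)=(1+o(1))\frac{(2r)!}{r!}\bigl(-2\delta^2(n)\bigr)^r$. From there your final substitution of \eqref{impfact} and \eqref{genexample} yields the stated formula.
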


Next, we determine asymptotics of $L_r(p_{\a})(n)$, as $n\to \infty$.

\begin{corollary}\label{ex2}
	With $\a>0$ we have, as $n\to \infty$, 
	\[
	L_r(p_{\a})(n)\sim \frac{(2r-1)!!\pi^r\a^{\frac{\a+r+1}{2}} }{ 2^{\frac{3\a+r+7}{2}} 3^{\frac{\a+r+1}{2}} }\frac{e^{2\pi\sqrt{\frac{2\a}3 n}}}{n^{\frac{\a+3+3r}{2}}}.
	\]
\end{corollary}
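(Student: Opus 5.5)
The plan is to derive the asymptotic of $p_{\a}(n)$ in the shape \eqref{genexample} and then read off the claim from Corollary \ref{newcor1}. First I would obtain the asymptotic of $p_{\a}(n)$ from the Tauberian Proposition \ref{sec0prop1}. By Lemma \ref{sec0lem1}, as $z\to 0$ in $R_{\Delta}$,
\[
\prod_{n\ge 1}\left(1-e^{-nz}\right)^{-\a}\sim \left(\frac{z}{2\pi}\right)^{\frac{\a}{2}}e^{\frac{\a\pi^2}{6z}}.
\]
So \eqref{sec0prop1eqn1} holds with $\l=(2\pi)^{-\frac{\a}{2}}$, $\b=\frac{\a}{2}$ and $\g=\frac{\a\pi^2}{6}$. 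Proposition \ref{sec0prop1} would then give
\[
p_{\a}(n)\sim \frac{(2\pi)^{-\frac{\a}{2}}\left(\frac{\a\pi^2}{6}\right)^{\frac{\a}{4}+\frac14}}{2\sqrt{\pi}}\frac{e^{\pi\sqrt{\frac{2\a}{3}n}}}{n^{\frac{\a+3}{4}}}.
\]

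The hypotheses of Proposition \ref{sec0prop1} need checking. We have $p_{\a}(0)=1$, and monotonicity would come from Lemma \ref{elem}. For $\a\ge 1$, write $(1-q)\prod_{n\ge 1}(1-q^n)^{-\a}=(1-q)^{1-\a}\prod_{n\ge2}(1-q^n)^{-\a}$. Each factor has non-negative coefficients, since $(1-x)^{-s}$ has non-negative binomial coefficients for $s\ge 0$. For $0<\a<1$ this argument fails, because $p_{\a}(1)=\a<p_{\a}(0)$. I expect this case to be the main obstacle. There I would instead invoke Meinardus' theorem for the product $\prod_n(1-q^n)^{-\a}$, whose Dirichlet series is $\a\z(s)$; it yields the same asymptotic for every $\a>0$. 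Alternatively, one could apply the Tauberian argument to a shifted sequence, for which monotonicity holds beyond a finite range.

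The asymptotic above is of the form \eqref{genexample} with $\mathcal{S}=\{\frac12\}$, $\l^*=\frac12$, $A_{\frac12}=\pi\sqrt{\frac{2\a}{3}}>0$, $c_2=\frac{\a+3}{4}$, and $c_1$ equal to the constant displayed above. Hence \ref{genexampleassump1}--\ref{genexampleassump4} hold, and Corollary \ref{newcor1} applies. With these values $2\l^*(1-\l^*)A_{\l^*}=\frac{\pi}{2}\sqrt{\frac{2\a}{3}}$, the exponential becomes $e^{2\pi\sqrt{\frac{2\a}{3}n}}$, and the power of $n$ is $2c_2+\frac{3r}{2}=\frac{\a+3+3r}{2}$, as claimed. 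What remains is simplifying $\frac{c_1^2}{2}\left(\frac{\pi}{2}\sqrt{\frac{2\a}{3}}\right)^r$ into powers of $2$, $3$, $\pi$ and $\a$ to match the stated constant; this is routine bookkeeping.
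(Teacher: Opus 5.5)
Your proposal is correct. Its overall structure is the paper's: obtain the asymptotic of $p_\alpha(n)$ in the shape \eqref{genexample} with $\mathcal{S}=\{\frac12\}$, then apply Corollary \ref{newcor1}. The difference lies only in the input. The paper imports $p_\alpha(n)\sim\frac{1}{\sqrt2}\left(\frac{\alpha}{24}\right)^{\frac{\alpha+1}{4}}e^{\pi\sqrt{\frac{2\alpha}{3}n}}n^{-\frac{\alpha+3}{4}}$ from \cite[Corollary 1.2]{IJT}, whereas you derive it.

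Your constant $(2\pi)^{-\frac{\alpha}{2}}\left(\frac{\alpha\pi^2}{6}\right)^{\frac{\alpha+1}{4}}/(2\sqrt{\pi})=2^{-\frac{3\alpha+5}{4}}3^{-\frac{\alpha+1}{4}}\alpha^{\frac{\alpha+1}{4}}$ agrees with the cited one. Feeding it into Corollary \ref{newcor1} does produce exactly $2^{-\frac{3\alpha+r+7}{2}}3^{-\frac{\alpha+r+1}{2}}\alpha^{\frac{\alpha+r+1}{2}}(2r-1)!!\pi^r$.

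You are right that Proposition \ref{sec0prop1} together with Lemma \ref{elem} only covers $\alpha\ge 1$, since $p_\alpha(1)=\alpha<1=p_\alpha(0)$ for $0<\alpha<1$. This is essentially why an external input is needed for general $\alpha$.

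Meinardus' theorem, with $D(s)=\alpha\zeta(s)$, $D(0)=-\frac{\alpha}{2}$, $D'(0)=-\frac{\alpha}{2}\log(2\pi)$, and the usual side condition on $\alpha\sum_{n\ge1}e^{-n\tau}$, reproduces the same asymptotic for every $\alpha>0$. The case split is therefore unnecessary; you can use Meinardus throughout.

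I would drop the ``shifted sequence'' alternative. It presupposes eventual monotonicity of $p_\alpha(n)$ for $0<\alpha<1$. The coefficients of $(1-q)^{1-\alpha}\prod_{n\ge2}(1-q^n)^{-\alpha}$ are not visibly non-negative, so proving this needs analytic input comparable to the asymptotic itself.

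On the trade-off: your route is self-contained modulo Meinardus and shows where the paper's own Tauberian tool suffices. The paper's route is shorter, since the asymptotic of $p_\alpha$ is available in the literature.
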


\begin{proof}
	From \cite[Corollary 1.2]{IJT}, we have, as $n\to \infty$, 
	\[
	p_\a(n)\sim
	\frac{1}{\sqrt{2}}\pa{\a}{24}^{\frac{\a+1}{4}}\frac{e^{\pi\sqrt{\frac{2\a}3 n}}}{n^{\frac{\a+3}{4}}}.
	\]
 Applying \Cref{newcor1}, the claim follows.
\end{proof}

\begin{proof}[Proof of \Cref{ex1}]
 Plugging $\a=1$ resp. $\a=24$ into \Cref{ex2} gives the theorem.\qedhere 	
\end{proof}

\begin{proof}[Proof of \Cref{brokenLagthm1}] 
 Plugging \Cref{brokenlem1} into \Cref{newcor1} gives the theorem. \qedhere 
\end{proof}

\begin{proof}[Proof of \Cref{newsec4lem1}]
First, using \eqref{deltadef}, we have, for $r\in \mathbb{N}$,
\begin{align*}
\Delta^{[r]}_h(a)(n)=\sum_{m=1}^{\left\lfloor\frac r2\right\rfloor+1}\binom{r}{2m-2}a\left(n-h(2m-2)\right)-\sum_{m=1}^{\left\lfloor\frac{r+1}{2}\right\rfloor}\binom{r}{2m-1}a\left(n-h(2m-1)\right).
\end{align*}	
 In the notation of \eqref{gendefn} and \eqref{gendefn1}, $T=1$, $M_1=\lfloor\frac{r}{2}\rfloor+1$, $M_2=\lfloor\frac{r+1}{2}\rfloor$, $\rho_1(m)=\smallbinom{r}{2m-2}$, $\rho_2(m)=\smallbinom{r}{2m-1}$, $s_1(1,m)=-h(2m-2)$, and $s_2(1,m)=-h(2m-1)$. Then, $\mathcal{C}=h r$. Using \Cref{useful} and \eqref{newdef3}, \ref{assump0} and \ref{assump1} hold with $N=r$. Moreover, $D_r=h^rr!$. 
  Using this, \Cref{genthm} with $T=1$ and $N=r$, and \eqref{newdef2}, we conclude the proof.\qedhere  
\end{proof}

By \Cref{fact2} \ref{fact2part1}, \Cref{newsec4lem1}, and \eqref{obs4}, we obtain the following asymptotics.

\begin{corollary}\label{newsec4lem3}
 Let $\{a_d(n)\}_{n\ge 0}$ be as in \eqref{genexample}. We have, 
	\[
	\Delta^{[r]}_h(a_d)(n)\sim c_1(d)\left(h \l^*  A_{\l^*}(d)\right)^r \frac{\exp\left(\sum_{\l\in \mathcal{S}}  A_{\l}(d)n^{\l}\right) }{n^{c_2(d)+\left(1-\l^*\right)r}}\ \ \text{ as}\ \ n\to \infty.
	\]
\end{corollary}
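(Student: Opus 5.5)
This follows by combining \Cref{newsec4lem1} with the asymptotics of $a_d$. First, \Cref{fact2} \ref{fact2part1} gives that $\{a_d(n)\}_{n\ge 0}$ satisfies \ref{GenGORZThm1assump1} for every bounded range of $j$, in particular for $|j|\le hr$, together with \ref{GenGORZThm1assump2} and \ref{genthmassump2}. The relevant sequence is
\[
A(n)=\sum_{\l\in\mathcal{S}}\l A_{\l}(d)n^{\l-1}-\frac{c_2(d)}{n}.
\]
So the hypotheses of \Cref{newsec4lem1} hold, and it yields, as $n\to\infty$,
\[
\Delta^{[r]}_h(a_d)(n)\sim h^rA^r(n)\,a_d(n).
\]

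Next, use \eqref{obs4}, which says $A(n)\sim \l^*A_{\l^*}(d)n^{\l^*-1}$. This holds because $\l^*$ is the largest exponent in the finite set $\mathcal{S}$ and $\l^*<1$, so the term $c_2(d)/n$ and all terms with $\l<\l^*$ are of lower order. Since $r$ is fixed, raising to the $r$-th power preserves the asymptotic equivalence:
\[
A^r(n)\sim (\l^*A_{\l^*}(d))^r n^{-(1-\l^*)r}.
\]

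Finally, substitute \eqref{genexample} for $a_d(n)$ and multiply the three asymptotic equivalences, which is legitimate since $\sim$ is multiplicative. This gives exactly the stated main term
\[
c_1(d)\left(h\l^*A_{\l^*}(d)\right)^r\frac{\exp\left(\sum_{\l\in\mathcal{S}}A_{\l}(d)n^{\l}\right)}{n^{c_2(d)+(1-\l^*)r}}.
\]

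There is no real obstacle here. The only point needing care is checking that \Cref{newsec4lem1} actually applies, namely that the range of $j$ it requires ($|j|\le hr$) is covered by \Cref{fact2}. That is immediate, since \Cref{fact2} allows arbitrary fixed bounds $\a_1\le j\le\a_2$.
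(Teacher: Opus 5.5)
Your argument is exactly the paper's proof: \Cref{fact2}~\ref{fact2part1} to verify the hypotheses, then \Cref{newsec4lem1}, then \eqref{obs4} and \eqref{genexample}. The bookkeeping of constants and exponents is correct. However, the step you call immediate is where both your argument and the paper's break down.

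The hypotheses of \Cref{newsec4lem1} are satisfied, but they are too weak for its conclusion. All that \eqref{genexample} gives, via \Cref{Fact1}, is \ref{GenGORZThm1assump1} with an \emph{unquantified} $o(1)$ error. On the other hand, $\Delta^{[r]}_h(a_d)(n)/a_d(n)=\sum_{j=0}^{r}(-1)^j\smallbinom{r}{j}a_d(n-hj)/a_d(n)$ is a combination whose main terms cancel down to relative size $A^r(n)\asymp n^{-(1-\l^*)r}\to 0$. Independent $o(1)$ errors in these ratios survive that cancellation. (In the proof of \Cref{genthm} this is hidden by pulling a single factor $1+o(1)$ out of a sum whose terms carry different $o(1)$'s.) Concretely, $a(n):=e^{\sqrt n}\bigl(1+(-1)^n/\log(n+2)\bigr)$ is positive and of the shape \eqref{genexample} with $\mathcal{S}=\{\frac12\}$, $A_{\frac12}=c_1=1$, $c_2=0$, yet
\[
a(n)-a(n-1)=\left(e^{\sqrt n}-e^{\sqrt{n-1}}\right)+(-1)^n\left(\frac{e^{\sqrt n}}{\log(n+2)}+\frac{e^{\sqrt{n-1}}}{\log(n+1)}\right).
\]
Here the first bracket is $\sim e^{\sqrt n}/(2\sqrt n)$ and the second is $\asymp e^{\sqrt n}/\log n$, so $\Delta^{[1]}_1(a)(n)$ changes sign infinitely often. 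The statement is therefore false already for $r=h=1$, and no argument using only \eqref{genexample} can prove it.

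What is missing is a quantitative error term. Put $f_d(x):=c_1(d)\exp\bigl(\sum_{\l\in\mathcal{S}}A_\l(d)x^\l\bigr)x^{-c_2(d)}$ and assume $a_d(n)=g(n)+O\bigl(f_d(n)n^{-\eta}\bigr)$ with $\eta>(1-\l^*)r$. Here $g=e^{G}\sim f_d$ should be smooth with $G'(x)\sim\l^*A_{\l^*}(d)x^{\l^*-1}$ and $G^{(k)}(x)\ll x^{\l^*-k}$ for $2\le k\le r$; for instance $g=f_d$, or $f_d$ times a finite expansion in negative powers of $x$.

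Since $f_d(n-hj)\sim f_d(n)$ for fixed $j$, the error contributes only $O(f_d(n)n^{-\eta})=o\bigl(f_d(n)n^{-(1-\l^*)r}\bigr)$. For the smooth part, the mean value theorem for divided differences gives $\Delta^{[r]}_h(g)(n)=h^r g^{(r)}(\xi_n)$ with $\xi_n\in(n-hr,n)$. Moreover, $g^{(r)}/g$ is a polynomial in $G',\dots,G^{(r)}$ in which every monomial other than $(G')^r$ is smaller by a factor $\ll x^{-\l^*}$. This yields exactly the stated main term.

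For the concrete sequences used afterwards ($p$, $\overline{p}$, $\textup{pp}$), expansions of this kind are available from the circle method. That extra input is what the downstream applications really rely on.
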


\begin{proof}[Proof of \Cref{mshiftthm}]
By Wright \cite{Wright}, we have 
\begin{equation*}
\textup{pp}(n)\sim C_1\frac{e^{A n^{\frac 23}}}{n^{\frac{25}{36}}}\ \ \text{as}\ \ n\to \infty,
\end{equation*}
where $C_1$ and $A$ are as in \Cref{mshiftthm}. Applying \Cref{newsec4lem3}, the claim follows.\qedhere 
\end{proof}

 Using \Cref{newsec4lem3}, \eqref{pnasymp}, and \cite{HR}, we obtain the following asymptotics.

\begin{corollary}\label{shiftediffthm}
	We have, as $n\to \infty$, 
\begin{align*}
	\Delta^{[r]}_{h}(p)(n)&\sim
\frac{\pi^r h^r}{2^{\frac r2+2}3^{\frac{r+1}2}}\frac{ e^{\pi\sqrt{\frac{2}{3}n}}}{n^{\frac r2+1}},\qquad 
\Delta^{[r]}_{h}(\overline{p})(n)\sim \frac{\pi^rh^r}{2^{r+3}} \frac{e^{\pi\sqrt{n}}}{n^{\frac r2+1}}.
\end{align*}
\end{corollary}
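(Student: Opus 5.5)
The plan is to apply \Cref{newsec4lem3} with $h=1$ to the Hardy--Ramanujan asymptotics \eqref{pnasymp} for $p(n)$ and to the analogous asymptotics for $\overline{p}(n)$, which follow from \cite{HR}-type methods or from \Cref{sec0prop1} together with \Cref{sec0lem1}. Both sequences are of the shape \eqref{genexample}.

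For $p$, we take $\mathcal{S}=\{\frac12\}$, $A_{\frac12}=\pi\sqrt{\frac23}$, $c_1=\frac{1}{4\sqrt3}$ and $c_2=1$. Conditions \ref{genexampleassump1}--\ref{genexampleassump4} are immediate, so \Cref{newsec4lem3} gives
\[
\Delta^{[r]}_h(p)(n)\sim \frac{1}{4\sqrt3}\left(\frac{h\pi}{2}\sqrt{\frac23}\right)^r\frac{e^{\pi\sqrt{\frac23 n}}}{n^{1+\frac r2}}.
\]
The constant simplifies as $\left(\frac{\pi}{2}\sqrt{\frac23}\right)^r=\frac{\pi^r}{2^{r/2}3^{r/2}}$. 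Multiplying by $\frac{1}{4\sqrt3}=\frac{1}{2^2\,3^{1/2}}$ produces exactly $\frac{\pi^rh^r}{2^{\frac r2+2}3^{\frac{r+1}{2}}}$.

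For $\overline{p}$, I first establish $\overline{p}(n)\sim\frac{1}{8}\frac{e^{\pi\sqrt n}}{n}$. To do this, write $\sum\overline{p}(n)q^n=\frac{(q^2;q^2)_\infty}{(q;q)_\infty^2}$. \Cref{sec0lem1} then gives $\sim\frac{1}{2}\sqrt{\frac z\pi}\,e^{\frac{\pi^2}{4z}}$, so we may take $\l=\frac{1}{2\sqrt\pi}$, $\b=\frac12$ and $\g=\frac{\pi^2}{4}$. Monotonicity holds because $(1-q)$ times the generating function equals $(-q^2;q)_\infty/(q^2;q)_\infty$, which has nonnegative coefficients, so \Cref{elem} applies. \Cref{sec0prop1} then yields $\frac{1}{2\sqrt\pi}\cdot\frac{(\pi/2)^{1}}{2\sqrt\pi\, n}e^{\pi\sqrt n}=\frac{e^{\pi\sqrt n}}{8n}$, which is the classical result.

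With $\mathcal{S}=\{\frac12\}$, $A_{\frac12}=\pi$, $c_1=\frac18$ and $c_2=1$, \Cref{newsec4lem3} gives
\[
\frac18\left(\frac{h\pi}{2}\right)^r\frac{e^{\pi\sqrt n}}{n^{1+\frac r2}}=\frac{\pi^rh^r}{2^{r+3}}\frac{e^{\pi\sqrt n}}{n^{\frac r2+1}}.
\]

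The only real work is the constant bookkeeping and justifying the overpartition asymptotic. The latter is classical, and could alternatively be quoted directly from \cite{HR}-style results for $\overline{p}$.
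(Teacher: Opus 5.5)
Your route is the paper's own: feed the leading-order asymptotics of $p$ and $\overline{p}$ into \Cref{newsec4lem3}. Your constants are right, and you correctly restore the $\pi$ missing from the exponent in \eqref{pnasymp}. Your Tauberian derivation of $\overline{p}(n)\sim\frac{e^{\pi\sqrt n}}{8n}$ is also fine, apart from two harmless slips. First, you need general $h$, not $h=1$. Second, $(1-q)\sum_{n\ge0}\overline{p}(n)q^n=(1+q)\frac{(-q^2;q)_\infty}{(q^2;q)_\infty}$, which still has nonnegative coefficients.

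The genuine gap, which the paper shares, is the step you treat as free: applying \Cref{newsec4lem3} at all. A statement $a(n)\sim c_1e^{\sum_{\l}A_{\l}n^{\l}}n^{-c_2}$ does not determine $\Delta^{[r]}_h(a)(n)$. Via \Cref{Fact1} it only gives \ref{GenGORZThm1assump1} with an $o(1)$ error, while $\Delta^{[r]}_h(a)(n)/a(n)\asymp n^{-(1-\l^*)r}\to0$. In fact, since $A(n)\to0$, replacing $A(n)$ by $2A(n)$ preserves \ref{GenGORZThm1assump1}--\ref{genthmassump2}, so the conclusion of \Cref{newsec4lem1} cannot hold as stated. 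Concretely, $a(n)=e^{\sqrt n}\bigl(1+(-1)^n(n+2)^{-1/4}\bigr)\sim e^{\sqrt n}$, yet $\Delta^{[1]}(a)(n)=e^{\sqrt n}\bigl(2(-1)^nn^{-1/4}+O(n^{-1/2})\bigr)$. This is negative for large odd $n$, contradicting \Cref{newsec4lem3} with $r=h=1$. For the same reason \Cref{sec0prop1}, which only ever yields ``$\sim$'', cannot supply the input needed for $\overline{p}$.

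What you need is a smooth main term with relative error $o(n^{-r/2})$. For $p$, Rademacher's formula gives $p(n)=f(n)\bigl(1+O(e^{-c\sqrt n})\bigr)$ for some $c>0$, where $f(x):=\frac{1}{\pi\sqrt2}\frac{d}{dx}\bigl(\frac{\sinh(\mu\sqrt{x-1/24})}{\sqrt{x-1/24}}\bigr)$ and $\mu:=\pi\sqrt{2/3}$. For $\overline{p}$, Zuckerman's exact formula gives the analogue with main term $\frac{1}{2\pi}\frac{d}{dx}\bigl(\frac{\sinh(\pi\sqrt x)}{\sqrt x}\bigr)$ and $\mu=\pi$. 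Then $\Delta^{[r]}_h(f)(n)=h^rf^{(r)}(\xi_n)$ for some $\xi_n\in[n-hr,n]$. Each derivative of $e^{\mu\sqrt x}g(x)$, with $g$ of power type, brings down $\frac{\mu}{2\sqrt x}$ up to a relative error $O(x^{-1/2})$. Hence $f^{(r)}(\xi_n)\sim(\frac{\mu}{2\sqrt n})^rf(n)$, and the exponentially small remainder stays negligible after applying $\Delta^{[r]}_h$. With this input your constant computations go through unchanged and give both formulas.
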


\begin{proof}[Proof of \Cref{Jensencor1}]
 Using \eqref{deltadef}, \Cref{shiftediffthm}, and \Cref{sec:Jensenlem1}, we conclude the proof.
\end{proof}

\begin{proof}[Proof of \Cref{Lagthm1}]
Using \Cref{newcor1} and \Cref{shiftediffthm}, we conclude the proof.
\end{proof}

\section{Proofs of Theorems \ref{InflogconcaveThm}\textendash\ref{logdiffconj1}}\label{newsec2}

\begin{proof}[Proof of \Cref{InflogconcaveThm}]
	Using \eqref{logdef} and \eqref{Lagineq}, we have  
	\begin{equation}\label{Inflogconcaveeqn1}
	\frac{\mathcal{L}^{[1]}(a)(n+1)}{a^2(n+1)}
	=\frac{L_1(a)(n)}{a^2(n+1)}.
	\end{equation}
	Since $\{a(n)\}_{n\ge 0}$ satisfies 
	\ref{GenGORZThm1assump1} with $|j|\le 2$, \ref{GenGORZThm1assump2}, and \ref{genthmassump2}, we have, by  \Cref{newlem1}, 
	\[
	\frac{L_1(a)(n)}{a^2(n+1)}\sim 
	2\d^2(n)\ \ (\text{as}\ n\to \infty).
	\]
	Plugging this into \eqref{Inflogconcaveeqn1} and using \ref{InflogconcaveThmassump1} with $j=\pm 1$, \ref{InflogconcaveThmassump2}, and \ref{InflogconcaveThmassump3}, we obtain
	\begin{equation}\label{Inflogeqn5}
	\mathcal{L}^{[1]}(a)(n)\sim 2\d^2(n)a^2(n)\ \ (\text{as}\ n\to \infty).
	\end{equation}
	
	We now prove the theorem by induction on $r\in \mathbb{N}$. Using \eqref{Inflogeqn5}, it holds for $r=1$. Assume next that the theorem holds for $r$. Define $a_r(n):=\mathcal{L}^{[r]}(a)(n)$. By induction hypothesis,
	\begin{equation}\label{Indhyp}
	a_r(n)\sim  2^{2^{r+1}-r-2}\d(n)^{2^{r+1}-2} a^{2^r}\!(n). 
	\end{equation}
 Therefore, using \eqref{Indhyp}, \Cref{Fact1}, \ref{GenGORZThm1assump1}, and \ref{InflogconcaveThmassump1}, 
	$\{a_r(n)\}_{n\ge 0}$ satisfies \ref{GenGORZThm1assump1} with $j=\pm 1$, \ref{GenGORZThm1assump2}, and \ref{genthmassump2} with $A_r(n):=(2^{r+1}-2)B(n)+2^rA(n)$ and $\d_r(n):=\sqrt{2^r\d^2(n)-\left(2^{r+1}-2\right)\b^2(n)}$. Note that $\d_r(n)\sim 2^{\frac r2}\d(n)$ as $n\to \infty$. Moreover, employing \Cref{Fact1}, $\{\d_r(n)\}_{n\ge 0}$ satisfies \ref{InflogconcaveThmassump1} with $j=\pm 1$, \ref{InflogconcaveThmassump2}, and \ref{InflogconcaveThmassump3}. Using \eqref{Inflogeqn5} and \eqref{Indhyp}, the theorem holds for $r+1$.\qedhere
\end{proof}

Using \Cref{InflogconcaveThm}, we obtain the asymptotic main term of $\mathcal{L}^{[r]}(a_d)(n)$.

\begin{lemma}\label{Inftlogconcave:lem}
Let $\{a_d(n)\}_{n\ge 0}$ be as in \eqref{genexample}. For $r\in \mathbb{N}$, we have,  as $n\to \infty$,
	\[
	\mathcal{L}^{[r]}(a_d)(n)\sim 2^{2^r-r-1}c_1^{2^r}(d)\left(\l^*\left(1-\l^*\right)A_{\l^*}(d)\right)^{2^r-1}\frac{\exp\left(2^r\sum_{\l\in \mathcal{S}}A_{\l}(d)n^{\l}\right)}{n^{2^rc_2(d)+(2^r-1)(2-\l^*)}}.
	\]
\end{lemma}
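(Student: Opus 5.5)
The plan is to deduce this directly from \Cref{InflogconcaveThm}, applied to $a=a_d$, and then substitute the explicit asymptotics \eqref{genexample} and \eqref{impfact}. First we check the hypotheses. By \Cref{fact2} \ref{fact2part1}, $\{a_d(n)\}_{n\ge 0}$ satisfies \ref{GenGORZThm1assump1} for all bounded $j$, in particular $|j|\le 2$, together with \ref{GenGORZThm1assump2} and \ref{genthmassump2}. Here
\[
\d(n)=\frac1{\sqrt2n}\sqrt{\sum_{\l\in \mathcal{S}}\l(1-\l) A_{\l}(d)n^{\l}-c_2(d)}.
\]
By \Cref{fact2} \ref{fact2part2}, this $\{\d(n)\}_{n\ge 0}$ satisfies \ref{InflogconcaveThmassump1} with $j=\pm1$, \ref{InflogconcaveThmassump2}, and \ref{InflogconcaveThmassump3}. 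So \Cref{InflogconcaveThm} applies and gives
\[
\mathcal{L}^{[r]}(a_d)(n)\sim 2^{2^{r+1}-r-2}\d(n)^{2^{r+1}-2}a_d^{2^r}(n).
\]

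Next we insert the asymptotics. From \eqref{impfact},
\[
\d(n)^{2^{r+1}-2}=\left(\d^2(n)\right)^{2^r-1}\sim \left(\frac{\l^*(1-\l^*)A_{\l^*}(d)}{2}\right)^{2^r-1}n^{(\l^*-2)(2^r-1)}.
\]
From \eqref{genexample},
\[
a_d^{2^r}(n)\sim c_1^{2^r}(d)\exp\Bigl(2^r\sum_{\l\in\mathcal{S}}A_\l(d)n^\l\Bigr)n^{-2^rc_2(d)}.
\]
Asymptotic equivalence is preserved under finite products and fixed powers, so these may be multiplied. The power of $2$ is then $2^{2^{r+1}-r-2}\cdot 2^{-(2^r-1)}=2^{2^r-r-1}$. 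The power of $n$ is $n^{-2^rc_2(d)-(2^r-1)(2-\l^*)}$. Together these give exactly the claimed formula.

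There is no real obstacle; the proof is a direct substitution. The only points needing care are the following. Raising $\d(n)\sim(\cdot)$ to the fixed power $2^{r+1}-2$ must be legitimate; it is, since $r$ is fixed. Also, the hypotheses of \Cref{InflogconcaveThm} are used exactly in the form supplied by \Cref{fact2}.
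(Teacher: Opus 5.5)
Your argument is the paper's proof almost line for line. You check the hypotheses of \Cref{InflogconcaveThm} via \Cref{fact2}, apply it, and substitute \eqref{impfact} and \eqref{genexample}. Your bookkeeping, $2^{2^{r+1}-r-2}\cdot 2^{-(2^r-1)}=2^{2^r-r-1}$ and the exponent of $n$, is right. As a deduction from \Cref{InflogconcaveThm} as stated, nothing is missing.

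However, your remark that ``there is no real obstacle'' passes over a genuine gap, and the paper has the same gap. Let $g(n)$ be the right-hand side of \eqref{genexample}. \Cref{fact2} gets \ref{GenGORZThm1assump1} from \Cref{Fact1}, that is, from the bare relation $a_d(n)\sim g(n)$. So the error term in \ref{GenGORZThm1assump1} is only $o(1)$. That is too weak to determine
\[
\frac{\mathcal{L}^{[1]}(a_d)(n)}{a_d^2(n)}=1-\exp\left(\log\frac{a_d(n+1)}{a_d(n)}+\log\frac{a_d(n-1)}{a_d(n)}\right)=1-\exp\left(-2\delta^2(n)+o(1)\right),
\]
because the claimed main term $2\delta^2(n)\asymp n^{\lambda^*-2}$ itself tends to $0$ and is swamped by the $o(1)$.

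In the paper this enters in the proof of \Cref{genthm}. There a single factor $1+o(1)$ is pulled out of a sum whose terms carry different $o(1)$'s and cancel to leading order. \Cref{newlem1}, and hence \Cref{InflogconcaveThm}, inherit this, and the induction step again uses only \Cref{Fact1}.

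In this generality the lemma is in fact false. Take $a_d(n):=g(n)\bigl(1+\frac{(-1)^n}{\log n}\bigr)$ for $n\ge 3$. Then $a_d(n)\sim g(n)$, so $a_d$ is as in \eqref{genexample}. Yet $\mathcal{L}^{[1]}(a_d)(n)=-(4+o(1))\frac{g^2(n)}{\log n}<0$ for all large odd $n$, which contradicts the case $r=1$.

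To make your argument (and the paper's) valid, you need two things:
\begin{enumerate}
\item a relative error term in \eqref{genexample} of size $o(\delta^{2^{r+1}-2}(n))$, for example $O(n^{-K})$ with $K>(2^{r+1}-2)(1-\frac{\lambda^*}{2})$;
\item a computation of $\mathcal{L}^{[r]}$ for the smooth main term $g$ that tracks all error terms to that precision.
\end{enumerate}
For $p(n)$, for instance, such error terms follow from Rademacher's formula. They do not follow from \eqref{genexample} alone.
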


\begin{proof}
Using \Cref{fact2} \ref{fact2part1}, $\{a_d(n)\}_{n\ge 0}$ satisfies \ref{GenGORZThm1assump1} with $|j|\le 2$, \ref{GenGORZThm1assump2}, and \ref{genthmassump2}. Moreover, by \Cref{fact2} \ref{fact2part2}, $\{\d(n)\}_{n\ge 0}$ satisfies \ref{InflogconcaveThmassump1} with $j=\pm 1$, \ref{InflogconcaveThmassump2}, and \ref{InflogconcaveThmassump3}. Using \Cref{InflogconcaveThm}, \eqref{impfact}, and \eqref{genexample}, we conclude the proof.\qedhere
 \end{proof}

\begin{proof}[Proof of \Cref{inflogthm1}]
 Applying \Cref{Inftlogconcave:lem} to \eqref{pnasymp}, the claim follows.\qedhere 
\end{proof}

 Next, we study the asymptotics of $\operatorname{T}^{[r]}_a(n)$. First, by the Leibniz formula, we have 
\begin{equation}\label{det}
\operatorname{T}^{[r]}_a(n)
=\sum_{\sigma\in S_r}\text{sgn}(\s)\prod_{m=1}^{r}a\left(n-m+\s(m)\right),
\end{equation} 
where $S_r$ is the symmetric group of $r$ letters and $\text{sgn}(\s)$ denotes the sign of the permutation $\s$. Next, we require an elementary lemma.

\begin{proposition}\label{propdet}
	For $r\in \mathbb{N}_{\ge 2}$, we have 
	\[
	\prod_{m=1}^{r}\left(1-e^{-m x}\right)\sim r! x^r\ \text{as}\ x\to 0.
	\]	
\end{proposition}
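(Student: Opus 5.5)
The statement is elementary, and I would prove it factor by factor. Each factor satisfies $1-e^{-mx}\sim mx$ as $x\to 0$, because $\lim_{y\to 0}\frac{1-e^{-y}}{y}=1$. This limit follows from the power series
\[
1-e^{-y}=y-\frac{y^2}{2}+\cdots,
\]
or equivalently from l'H\^opital's rule. Applying it with $y=mx$ for each fixed $m\in\{1,\dots,r\}$ gives
\[
\frac{1-e^{-mx}}{mx}\to 1 \qquad (x\to 0).
\]

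Since the product has only finitely many factors, namely $r$, asymptotic equivalence passes to the product. Concretely,
\[
\frac{\prod_{m=1}^{r}\left(1-e^{-mx}\right)}{r!\,x^r}=\prod_{m=1}^{r}\frac{1-e^{-mx}}{mx},
\]
using $\prod_{m=1}^r m x=r!\,x^r$. The right-hand side is a finite product of quantities each tending to $1$, so it tends to $1$, which is the claim.

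There is no genuine obstacle here; the only point to watch is that $r$ is fixed, so no uniformity in $m$ is needed. If a quantitative form is wanted for later use, I would note that each factor equals $mx\,(1+O_r(x))$, so the product equals $r!\,x^r(1+O_r(x))$. I expect this refinement may be what the subsequent Toeplitz determinant computation uses, after the Leibniz expansion \eqref{det} is organized into a Vandermonde-type product.
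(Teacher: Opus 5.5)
Your proof is correct: writing the ratio as $\prod_{m=1}^{r}\frac{1-e^{-mx}}{mx}$, a fixed finite product of factors each tending to $1$, is the elementary argument the paper intends (it states the proposition without proof). In the Toeplitz application only the plain asymptotic is used, with $x=2\delta^2(n)\to 0$ in the recursion $\mathcal{T}_{r+1}(n)=\mathcal{T}_r(n)\prod_{m=1}^{r}\bigl(1-e^{-2m\delta^2(n)}\bigr)$, so your $O_r(x)$ refinement is not needed.
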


\begin{proof}[Proof of \Cref{detthm}]
	By \eqref{det} and \ref{GenGORZThm1assump1} with $|j|\le r-1$, we have, as $n\to \infty$,
	\begin{align*}
	&\frac{\operatorname{T}^{[r]}_a(n)}{a^{r}(n)}\sim \det\left(\left(e^{-\left(k-\ell\right)^2\d^2(n)}\right)_{1\le k,\ell\le r}\right).
	\end{align*}
	Thus, to prove the theorem, it suffices to show that, as $n\to \infty$, 
	\begin{equation}\label{detthm3}
	\mathcal{T}_r(n):=\det\left(\left(e^{-\left(k-\ell\right)^2\d^2(n)}\right)_{1\le k,\ell\le r}\right)\sim \frac{(r-1)!^r2^{\frac{r(r-1)}{2}}}{\prod_{j=2}^{r-1}j^{j}}\d^{r(r-1)}(n).
	\end{equation}
	To prove this, we use induction on $r\in \mathbb{N}_{\ge 2}$. For $r=2$, \eqref{detthm3} holds by \ref{GenGORZThm1assump2}. Assume next that \eqref{detthm3} holds for some $r\ge 2$. From \cite{DP}, we have that
	\begin{align*}
	\mathcal{T}_{r+1}(n)
	&
	=\mathcal{T}_r(n)\prod_{m=1}^{r}\left(1-e^{-2m\d^2(n)}\right).
	\end{align*}
 Using \Cref{propdet} and the induction hypothesis then gives that \eqref{detthm3} holds for $r+1$.\qedhere 
\end{proof}

 Using \Cref{fact2} \ref{fact2part1}, \Cref{detthm}, \eqref{impfact}, and \eqref{genexample}, we obtain the following.

\begin{corollary}\label{detmain1}
	Let $\{a_d(n)\}$ be as in \eqref{genexample}. As $n\to \infty$, we have
	\[
	\operatorname{T}^{[r]}_{a_d}(n)\sim \frac{(r-1)!^r\left(\l^*(1-\l^*)A_{\l^*}(d)\right)^{\frac{r(r-1)}{2}}c^r_1(d)}{\prod_{j=2}^{r-1}j^{j}}\frac{\exp\left(r\sum_{\l\in \mathcal{S}}A_{\l}(d)n^{\l}\right)}{n^{\left(1-\frac{\l^*}{2}\right)r(r-1)+c_2(d)r}}. 
	\]
\end{corollary}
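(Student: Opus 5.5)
The statement is Corollary~\ref{detmain1}. I will deduce it directly from \Cref{detthm}, in the same way Corollaries~\ref{newcor1} and \ref{newsec4lem3} were obtained from their general theorems.

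First, by \Cref{fact2} \ref{fact2part1}, the sequence $\{a_d(n)\}_{n\ge 0}$ satisfies \ref{GenGORZThm1assump1} for all bounded $j$, in particular for $|j|\le r-1$, together with \ref{GenGORZThm1assump2}. Here
\[
\d(n)=\frac{1}{\sqrt2 n}\sqrt{\sum_{\l\in\mathcal{S}}\l(1-\l)A_{\l}(d)n^{\l}-c_2(d)}.
\]
So \Cref{detthm} applies and gives
\[
\operatorname{T}^{[r]}_{a_d}(n)\sim \frac{(r-1)!^r2^{\frac{r(r-1)}{2}}}{\prod_{j=2}^{r-1}j^{j}}\,\d^{r(r-1)}(n)\,a_d^{r}(n).
\]

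Next, I substitute two asymptotics into this. From \eqref{impfact} we have $\d^2(n)\sim \frac{\l^*(1-\l^*)A_{\l^*}(d)}{2}n^{\l^*-2}$. Since $\frac{r(r-1)}{2}$ is a fixed integer power, this gives
\[
\d^{r(r-1)}(n)\sim 2^{-\frac{r(r-1)}{2}}\left(\l^*(1-\l^*)A_{\l^*}(d)\right)^{\frac{r(r-1)}{2}} n^{-\left(1-\frac{\l^*}{2}\right)r(r-1)}.
\]
From \eqref{genexample}, $a_d^r(n)\sim c_1^r(d)\exp\bigl(r\sum_{\l\in\mathcal{S}}A_{\l}(d)n^{\l}\bigr)n^{-rc_2(d)}$. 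Multiplying these, the factor $2^{\frac{r(r-1)}{2}}$ from \Cref{detthm} cancels against $2^{-\frac{r(r-1)}{2}}$, and the powers of $n$ combine to $n^{-(1-\frac{\l^*}{2})r(r-1)-c_2(d)r}$. This is exactly the claimed formula.

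There is no real obstacle. The only point to check is that asymptotic equivalences may be multiplied and raised to fixed powers, which is immediate. One also needs the quantity under the square root in $\d(n)$ to be positive for $n\gg1$, and this follows from \ref{genexampleassump3}.
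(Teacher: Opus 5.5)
Your bookkeeping is right, and your route is exactly the paper's: \Cref{fact2}~\ref{fact2part1}, then \Cref{detthm}, then \eqref{impfact} and \eqref{genexample}, with $2^{\frac{r(r-1)}{2}}$ cancelling. The gap is your claim that there is ``no real obstacle''. The argument breaks at ``so \Cref{detthm} applies'', and this defect comes from \Cref{detthm}, not from your bookkeeping.

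From the bare relation $\sim$ in \eqref{genexample}, \Cref{Fact1} and \Cref{fact2} only give {\rm{\ref{GenGORZThm1assump1}}} with an additive error $o(1)$. With such an error, the first step in the proof of \Cref{detthm}, namely $\operatorname{T}^{[r]}_a(n)/a^r(n)\sim\det(e^{-(k-\ell)^2\d^2(n)})$, does not follow. By \eqref{det}, the left side is a signed sum of $r!$ terms of size $\asymp 1$, each known only up to its own factor $1+o(1)$. The claimed main term, however, has size $\d^{r(r-1)}(n)\to 0$. Those individual errors would have to be $o(\d^{r(r-1)}(n))$, and they cannot be pulled out as a common factor.

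In fact the statement is false as written. Let $a(n):=e^{\sqrt n}(1+(-1)^n/\log n)$ for $n\ge 2$ and $a(0)=a(1)=1$. Then $a(n)\sim e^{\sqrt n}$, which has the form \eqref{genexample} with $\mathcal{S}=\{\frac12\}$, $A_{\frac12}(d)=c_1(d)=1$, $c_2(d)=0$. But for $r=2$ and odd $n$,
\[
\operatorname{T}^{[2]}_a(n)=a^2(n)-a(n-1)a(n+1)=-\left(4+o(1)\right)\frac{e^{2\sqrt n}}{\log n}<0,
\]
whereas the claimed right-hand side is $\frac14 e^{2\sqrt n}n^{-\frac32}>0$. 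The same sequence satisfies {\rm{\ref{GenGORZThm1assump1}}} with $|j|\le 1$, {\rm{\ref{GenGORZThm1assump2}}}, and {\rm{\ref{genthmassump2}}}, with $A(n)=\frac{1}{2\sqrt n}$ and $\d^2(n)=\frac{1}{8n^{3/2}}$. So it contradicts \Cref{detthm} itself.

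A correct proof needs quantitative input of the kind in \Cref{GORZThm1}. Namely, $a_d(n)$ must agree with a smooth function of the shape \eqref{genexample} up to a relative error $o(\d^{r(r-1)}(n))=o(n^{-(1-\frac{\l^*}{2})r(r-1)})$. Even then, you cannot put the terms $\sum_{\ell\ge 3}A_\ell(n)j^\ell$ from the proof of \Cref{fact2} into the error. They have size $n^{\l^*-3}$, which exceeds $\d^{r(r-1)}(n)$ once $r\ge 3$. Instead you must use $A_\ell(n)=o(\d^\ell(n))$ to show that they change the determinant only by a factor $1+o(1)$. One way to do this: after removing the factors $e^{A(n)(\ell-k)}$, which do not change the determinant, the entries are $G_n(\d(n)(\ell-k))$ with $G_n(u)=\exp(-u^2+\sum_{\ell\ge3}A_\ell(n)\d(n)^{-\ell}u^\ell)\to e^{-u^2}$ smoothly. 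Then expand the determinant of a smooth kernel on a grid of mesh $\d(n)$. Input of this strength is available for $p(n)$ from Rademacher's formula. It does not follow from an asymptotic equivalence alone, such as the one produced by the Tauberian \Cref{sec0prop1}.
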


\begin{proof}[Proof of \Cref{detpn}]
 Applying \Cref{detmain1} with \eqref{pnasymp}, we conclude the proof.\qedhere 
\end{proof}

\begin{proof}[Proof of \Cref{detopn}]
 Applying \Cref{detmain1} with the asymptotics of $\overline{p}(n)$, we conclude the proof. \qedhere 
\end{proof}

\begin{proof}[Proof of \Cref{detbpn}]
Applying \Cref{detmain1} with \Cref{brokenlem1}, we obtain the first claim. For the remaining claims, we apply \Cref{detmain1} and \Cref{shiftediffthm}.\qedhere 
\end{proof}

\begin{proof}[Proof of \Cref{itlagthm1}]
We use induction on $k\in \mathbb{N}_0$.  The base case $k=0$ follows from \Cref{newlem1}. Assume next that the statement holds for $k\in \mathbb{N}_0$. Define $a_{k}(n):=L^{[k]}_1\left(L_r(a)\right)(n)$. Using \eqref{itlagdef}, we write $L^{[k+1]}_1(L_r(a))(n)=L^{[1]}_1(a_k(n))$.
By the induction hypothesis, 
\begin{align}\label{itlageqn2}
a_k(n)&\sim 
(2r-1)!!^{2^k}2^{ 2^{k+1}\left(r+1\right)-k-3}\d(n)^{2^{k+1}\left(r+1\right)-2} a^{2^{k+1}}(n)\ \ (\text{as}\ n\to \infty).
\end{align}
 Therefore for $j=\pm 1$, by \Cref{Fact1}, \ref{InflogconcaveThmassump1}, and \ref{GenGORZThm1assump1}, we have as $n\to \infty$,
\begin{align*}
\log\left(\frac{a_k(n\pm 1)}{a(n)}\right)&=\pm\left(\left(2^{k+1}\left(r+1\right)-2\right)B(n)+2^{k+1}A(n)\right)\\
&\hspace{4 cm}-\left(2^{k+1}\d^2(n)-\left(2^{k+1}\left(r+1\right)-2\right)\b^2(n)\right)+o(1).
\end{align*}
Thus  $\{a_k(n)\}_{n\ge 0}$ satisfies \ref{GenGORZThm1assump1} with $j=\pm 1$,  $A_k(n):=(2^{k+1}\left(r+1\right)-2)B(n)+ 2^{k+1} A(n)$, and $\d_k(n):=\sqrt{2^{k+1}\d^2(n)-(2^{k+1}(r+1)-2)\b^2(n)}$. Using \ref{InflogconcaveThmassump3} and \ref{GenGORZThm1assump2}, \ref{GenGORZThm1assump2} holds for $\{\d_k(n)\}_{n\ge 0}$. By \ref{genthmassump2} and \ref{InflogconcaveThmassump2}, $\{a_k(n)\}_{n\ge 0}$ satisfies \ref{genthmassump2}. Using \Cref{newlem1} and the induction hypothesis then gives that \eqref{itlageqn2} holds for $k+1$.\qedhere 
\end{proof}

Using \Cref{itlagthm1}, we obtain the asymptotic main term of $L^{[k]}_1(L_r(a_d))(n)$.

\begin{corollary}\label{itlagthm2}
Let $\{a_d(n)\}_{n\ge 0}$ be as in \eqref{genexample}. For $k\in \mathbb{N}_0$ and $r\in \mathbb{N}$, we have, as $n\to \infty$,
\begin{align*}
L^{[k]}_1\left(L_r(a_d)\right)(n)&\sim (2r-1)!!^{2^k} 2^{2^k(r+1)-k-2}\left(\l^*(1-\l^*)A_{\l^*}(d)\right)^{2^k(r+1)-1} c^{2^{k+1}}_1(d)\\
&\hspace{6.5 cm}\times \frac{\exp\left(2^{k+1}\sum_{\l\in \mathcal{S}}A_{\l}(d)n^{\l}\right)}{n^{\left(2-\l^*\right)\left(2^k(r+1)-1\right)+2^{k+1}c_2(d)}}.
\end{align*}
\end{corollary}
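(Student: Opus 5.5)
The plan is to follow the pattern of the proofs of \Cref{Inftlogconcave:lem} and \Cref{detmain1}: check the hypotheses of \Cref{itlagthm1} for $a_d$, apply that theorem, and then substitute the explicit asymptotics of $\d(n)$ and $a_d(n)$.

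First, by \Cref{fact2} \ref{fact2part1}, the sequence $\{a_d(n)\}_{n\ge 0}$ satisfies \ref{GenGORZThm1assump1} for all $|j|\le 2r$, together with \ref{GenGORZThm1assump2} and \ref{genthmassump2}. By \Cref{fact2} \ref{fact2part2}, the sequence $\{\d(n)\}_{n\ge 0}$ satisfies \ref{InflogconcaveThmassump1} for $j=\pm1$, together with \ref{InflogconcaveThmassump2} and \ref{InflogconcaveThmassump3}. So \Cref{itlagthm1} applies and gives
\[
L^{[k]}_1\left(L_r(a_d)\right)(n)\sim (2r-1)!!^{2^k}2^{2^{k+1}(r+1)-k-3}\d(n)^{2^{k+1}(r+1)-2}a_d^{2^{k+1}}(n).
\]

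Next, insert \eqref{impfact}. Writing $E:=2^{k+1}(r+1)-2=2\left(2^k(r+1)-1\right)$, we get
\[
\d(n)^{E}\sim \left(\frac{\l^*(1-\l^*)A_{\l^*}(d)}{2}\right)^{2^k(r+1)-1}n^{-(2-\l^*)(2^k(r+1)-1)}.
\]
Then insert \eqref{genexample} raised to the power $2^{k+1}$. This contributes $c_1^{2^{k+1}}(d)$, the factor $\exp\!\left(2^{k+1}\sum_{\l\in\mathcal{S}}A_\l(d)n^\l\right)$, and $n^{-2^{k+1}c_2(d)}$. Products of finitely many asymptotic equivalences are again equivalences, so no extra error analysis is needed.

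The only step that needs care is the power of $2$. The total exponent is
\[
2^{k+1}(r+1)-k-3-\left(2^k(r+1)-1\right)=2^k(r+1)-k-2,
\]
which matches the statement. The exponent of $n$ is likewise $(2-\l^*)(2^k(r+1)-1)+2^{k+1}c_2(d)$, as claimed.

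I expect no real obstacle, since everything reduces to \Cref{itlagthm1}. The point to double-check is that \Cref{fact2} supplies \ref{GenGORZThm1assump1} on the full range $|j|\le 2r$ required there. This holds because \Cref{fact2} \ref{fact2part1} is valid for arbitrary fixed $\a_1\le j\le \a_2$.
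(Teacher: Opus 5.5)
Your reduction is the paper's proof: verify the hypotheses of \Cref{itlagthm1} via \Cref{fact2}, then insert \eqref{impfact} and \eqref{genexample}. Your bookkeeping of the powers of $2$ and of $n$ is correct. The step you flag to double-check, the range $|j|\le 2r$, is not where the difficulty lies. The real problem is that neither your argument nor the paper's establishes the statement, and the statement is false as written.

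\Cref{fact2} derives \ref{GenGORZThm1assump1} from the bare equivalence \eqref{genexample} via \Cref{Fact1}, so it only gives an $o(1)$ error. Under \ref{GenGORZThm1assump2} and \ref{genthmassump2}, the main terms $A(n)j-\d^2(n)j^2$ are themselves $o(1)$ for bounded $j$. So \ref{GenGORZThm1assump1}--\ref{genthmassump2} in this form only say $a(n+j)/a(n)\to 1$, and they do not determine $\d(n)$. Indeed, $(A,2\d)$ satisfies them, and also \ref{InflogconcaveThmassump1}--\ref{InflogconcaveThmassump3}, whenever $(A,\d)$ does. \Cref{itlagthm1} would then give two incompatible asymptotics, differing by the factor $2^{2^{k+1}(r+1)-2}\neq 1$. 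The underlying point is that $L_r(a)(n)/a^2(n)$ has size $\d^{2r}(n)\to 0$ and arises through cancellation, so information of the form $1+o(1)$ about $a$ cannot detect it.

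Here is a concrete counterexample. The sequence $a(n):=e^{\sqrt n}\,(1+(-1)^n/\log n)$ for $n\ge 3$ satisfies \eqref{genexample} with $\mathcal{S}=\{\frac12\}$, $A_{1/2}(d)=c_1(d)=1$, and $c_2(d)=0$. The case $k=0$, $r=1$ of the statement then predicts $L_1(a)(n)\sim \frac14 e^{2\sqrt n}n^{-3/2}>0$. However, for even $n$ we have
\[
\frac{a^2(n+1)}{a(n)a(n+2)}=e^{2\sqrt{n+1}-\sqrt n-\sqrt{n+2}}\cdot\frac{\bigl(1-\frac{1}{\log(n+1)}\bigr)^2}{\bigl(1+\frac{1}{\log n}\bigr)\bigl(1+\frac{1}{\log(n+2)}\bigr)}=\bigl(1+O(n^{-3/2})\bigr)\Bigl(1-\frac{4}{\log n}+O\bigl((\log n)^{-2}\bigr)\Bigr),
\]
which is $<1$ for large $n$. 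Hence $L_1(a)(n)=a^2(n+1)-a(n)a(n+2)<0$ for all large even $n$.

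A correct argument needs \ref{GenGORZThm1assump1} with error $o(\d^{2r}(n))$, and higher Taylor coefficients that are $o(\d^{\ell}(n))$ as in \Cref{GORZThm1}. This requires a hypothesis on $a_d$ stronger than \eqref{genexample}, for instance a full asymptotic expansion, which is available for $p$, $\overline{p}$, and $\textup{spt}$. In addition, for $k\ge 1$ the induction must carry more than the leading term of $a_k(n)$. Knowing only $a_k(n)\sim F(n)$ again controls $\log(a_k(n\pm1)/a_k(n))$ up to $o(1)$, not up to the $o(\d^2(n))$ that the next application of $L_1$ requires.
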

\begin{proof}
 By \Cref{fact2} \ref{fact2part1}, $\{a_d(n)\}_{n\ge 0}$ satisfies {\rm{\ref{GenGORZThm1assump1}}} with $|j|\le 2r$, {\rm{\ref{GenGORZThm1assump2}}}, and {\rm{\ref{genthmassump2}}}. By \Cref{fact2} \ref{fact2part2}, \ref{InflogconcaveThmassump1} with $|j|\le 1$, \ref{InflogconcaveThmassump2}, and \ref{InflogconcaveThmassump3} hold for $\{\d(n)\}_{n\ge 0}$. Thus, applying \Cref{itlagthm1}, \eqref{impfact}, and \eqref{genexample}, we conclude the proof.\qedhere 
\end{proof}

\begin{proof}[Proof of \Cref{itlagthm3}]
 Applying \Cref{itlagthm2} with \eqref{pnasymp}, the asymptotics of $\overline{p}$, and the asymptotics of $\textup{spt}(n)$ (see \cite{B}), we conclude the proof.\qedhere 
\end{proof}

Next, we show the asymptotics of $\Delta^{[r]}(g_d)(n)$. 

\begin{lemma}\label{logdifflem1}
Let $\{g_d(n)\}_{n\ge 0}$ be as in \eqref{loggenexample}. For $r\in \mathbb{N}$ and $0<h_2(d)<1$, we have,
	\[
		(-1)^{r+1}\Delta^{[r]}(g_d)(n)\sim \frac{h_1(d)h_2(d)\prod_{m=1}^{r-1}\left(r-m-h_2(d)\right)}{n^{r-h_2(d)}}\ \ \text{as}\ n\to \infty.
	\]
\end{lemma}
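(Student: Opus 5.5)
The plan is to write the $r$-fold difference as a signed binomial sum, expand each shifted term $g_d(n-k)$ around $n$, and use \Cref{useful} to cancel all contributions of order lower than $r$. This is the same mechanism as in the proof of \Cref{newsec4lem1}, but in the polynomial-growth regime, so no exponential factor appears. Iterating \eqref{deltadef} with $h=1$ gives
\[
\Delta^{[r]}(g_d)(n)=\sum_{k=0}^{r}(-1)^k\binom{r}{k}g_d(n-k).
\]
For fixed $0\le k\le r$, I would write the main part of $g_d(n-k)$ as
\[
h_1(d)(n-k)^{h_2(d)}=h_1(d)n^{h_2(d)}\sum_{t\ge 0}\binom{h_2(d)}{t}\left(-\frac kn\right)^t,
\]
which converges for $n>r$.

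Inserting this and interchanging the finite sum over $k$ with the sum over $t$, the coefficient of $n^{h_2(d)-t}$ is $h_1(d)\binom{h_2(d)}{t}(-1)^t\sum_{k=0}^r(-1)^k\binom rk k^t$. By \Cref{useful} this vanishes for $t<r$ and equals $(-1)^r r!$ for $t=r$. Hence the leading surviving term is
\[
h_1(d)\,h_2(d)\bigl(h_2(d)-1\bigr)\cdots\bigl(h_2(d)-r+1\bigr)\,n^{h_2(d)-r}.
\]
Since $0<h_2(d)<1$, this product equals $(-1)^{r-1}h_2(d)\prod_{m=1}^{r-1}\bigl(m-h_2(d)\bigr)$, which is nonzero. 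Multiplying by $(-1)^{r+1}$ gives a positive constant, and reindexing $m\mapsto r-m$ yields exactly $h_2(d)\prod_{m=1}^{r-1}(r-m-h_2(d))$. The tail $t\ge r+1$ is bounded by $O_r(n^{h_2(d)-r-1})$, since $|\binom{h_2(d)}{t}|\le 1$ and $k\le r$, so it is negligible.

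The main obstacle is the error term. The bare relation $g_d(n)\sim h_1(d)n^{h_2(d)}$ only gives $g_d(n-k)=h_1(d)(n-k)^{h_2(d)}+o(n^{h_2(d)})$, and an arbitrary $o(n^{h_2(d)})$ perturbation can swamp a quantity of size $n^{h_2(d)-r}$. I would therefore read \eqref{loggenexample} in the refined sense needed for the application: $g_d(n)=h_1(d)n^{h_2(d)}+E_d(n)$, where $E_d$ is a finite combination of $\log n$, constants, and powers $n^{\mu}$ with $\mu<h_2(d)$, plus a remainder whose $r$-th differences are $o(n^{h_2(d)-r})$. This is exactly the situation for $g=\log b_k$ via \Cref{brokenlem1}: there $\log b_k(n)=c\sqrt n-\frac54\log n+\text{const}+o(1)$, and the needed control on the $o(1)$ remainder comes from the full asymptotic expansion of $b_k(n)$ implicit in \Cref{brokenlem1}.

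For each such piece, the same expansion argument with \Cref{useful} shows that its $r$-th difference is $O(n^{\mu-r})$, or $O(n^{-r})$ for $\log n$ and constants, hence $o(n^{h_2(d)-r})$. I would verify this first for $\log(n-k)=\log n+\log(1-\frac kn)$, whose expansion produces only integer powers $n^{-t}$ and so gives at most $O(n^{-r})$. Then I would conclude by comparing orders.
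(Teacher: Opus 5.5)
Your argument is correct under the strengthened hypothesis you adopt, and some strengthening is unavoidable: the lemma is false under the literal hypothesis \eqref{loggenexample}. For example, $g(n):=\sqrt n+(-1)^n/\log n$ satisfies $g(n)\sim\sqrt n$ (so $h_1(d)=1$, $h_2(d)=\frac12$), but
\[
\Delta^{[1]}(g)(n)=\frac{1}{\sqrt n+\sqrt{n-1}}+(-1)^n\left(\frac{1}{\log n}+\frac{1}{\log(n-1)}\right)\sim\frac{2(-1)^n}{\log n},
\]
which is not asymptotic to $\frac{1}{2\sqrt n}$. The perturbation here is only $o(1)$. That is exactly the kind of remainder $\log(a_d(n))$ inherits from \eqref{genexample}, so \Cref{logdifflem2} also fails as stated, for the same reason.

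The paper's proof does not get around this. It inducts on $r$. The base case uses \Cref{Fact1} to put $g_d$ in the form \ref{GenGORZThm1assump1} with $A(n)=\frac{h_2(d)}{n}$, and then applies \Cref{newsec4lem1} with $h=r=1$. The inductive step is asserted ``by \Cref{Fact1} and the definition of $\Delta^{[r]}$''. But \Cref{Fact1} only gives an unquantified $o(1)$ in each log-ratio. These errors differ from shift to shift and need not be $o(A(n))$. Indeed, the $g$ above satisfies every hypothesis of \Cref{newsec4lem1} (with $A(n)=\delta(n)=\frac{1}{2n}$) yet violates its conclusion. The inductive step likewise amounts to differencing an asymptotic equivalence, which is exactly what you observed cannot work.

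Given your refined hypothesis, your route is a complete proof. You write $\Delta^{[r]}(g_d)(n)=\sum_{k=0}^{r}(-1)^k\smallbinom{r}{k}g_d(n-k)$, expand $(1-\frac kn)^{h_2(d)}$, and kill the orders $t<r$ with \Cref{useful}. This is the same cancellation mechanism the paper uses inside \Cref{genthm}, but applied to an explicit main term with every error tracked. The sign bookkeeping, the reindexing $m\mapsto r-m$, the tail bound, and the $O(n^{-r})$ and $O(n^{\mu-r})$ bounds for the $\log n$ and $n^{\mu}$ pieces all check out. (The finite-difference mean value theorem, $\Delta^{[r]}(f)(n)=f^{(r)}(\xi_n)$ with $n-r<\xi_n<n$, would give all of these at once.)

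The one claim to withdraw is that the control on the remainder of $\log(b_k(n))$ is ``implicit in \Cref{brokenlem1}''. That lemma comes from the Tauberian \Cref{sec0prop1}, which yields only $b_k(n)\sim\cdots$ with no rate. It therefore says nothing about differences of the $o(1)$ remainder. To reach \Cref{logdiffconj1} you need a genuine expansion
\[
b_k(n)=Cn^{-\frac54}e^{c\sqrt n}\Bigl(1+\sum_{j=1}^{J}\kappa_jn^{-\frac j2}+O\bigl(n^{-\frac{J+1}{2}}\bigr)\Bigr)\quad\text{with } J\ge 2r-1,
\]
for instance from the circle method applied to the eta-quotient $B_k(q)$. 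This input is missing from the paper as well.
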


\begin{proof}
We prove the lemma by induction on $r\in \mathbb{N}$. Using \eqref{loggenexample} and \Cref{Fact1}, we see that $\{g_d(n)\}_{n\ge 0}$ satisfies \ref{GenGORZThm1assump1} with $|j|\le 1$, $A(n):=\frac{h_2(d)}{n}$, and  $\d(n):=\sqrt{\frac{h_2(d)}{2}}\frac 1n$, \ref{GenGORZThm1assump2}, and \ref{genthmassump2}. Consequently, using \Cref{newsec4lem1} (with $h=r=1$) and \eqref{loggenexample}, we have, as $n\to \infty$,
	\begin{equation*}
	\Delta^{[1]}\left(g_d\right)(n)
	\sim \frac{h_1(d)h_2(d)}{n^{1-h_2(d)}}.
	\end{equation*}
 Thus, for $r=1$, the lemma holds. Next, assume by induction, that the lemma holds for $r\in \mathbb{N}$. Define  $g^{[r]}_d(n):=(-1)^{r+1}\Delta^{[r]}\left(g_d\right)(n)$. 
	Consequently, by \Cref{Fact1} and the definition of $\Delta^{[r]}(a)(n)$, the lemma holds for $r+1$.\qedhere
\end{proof}

The following lemma gives the asymptotics of $\Delta^{[r]}(\log(a_d))(n)$.

\begin{lemma}\label{logdifflem2}
	Let $\{a_d(n)\}_{n\ge 0}$ be as in \eqref{genexample}. Then we have, as $n\to \infty$,
	\[
	(-1)^{r+1}\Delta^{[r]}\left(\log(a_d)\right)(n)\sim \frac{\l^*A_{\l^*}(d)\prod_{m=1}^{r-1}\left(r-m-\l^*\right)}{n^{r-\l^*}}.
	\]
\end{lemma}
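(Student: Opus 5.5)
The plan is to reduce the statement to Lemma~\ref{logdifflem1}. I will not compute $\Delta^{[r]}(\log(a_d))$ directly; instead I will peel off one difference and study the sequence $g(n):=\Delta^{[1]}(\log(a_d))(n)=\log\bigl(\frac{a_d(n)}{a_d(n-1)}\bigr)$. Then $\Delta^{[r]}(\log(a_d))(n)=\Delta^{[r-1]}(g)(n)$, so the problem becomes finding the asymptotic shape of $g$ and applying iterated differences to it.

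\textbf{Step 1: asymptotics of $g$.} By \eqref{neweqn9} with $j=-1$, we have $g(n)=A(n)+\d^2(n)+\sum_{\ell\ge3}(-1)^{\ell+1}A_\ell(n)+o(1)$. An error term of size $o(1)$ is useless here, because $A(n)\to0$. So I will redo the expansion using the exact ratio of the main terms of \eqref{genexample}:
\[
\log\frac{a_d(n)}{a_d(n-1)}=\sum_{\l\in\mathcal S}A_\l(d)\bigl(n^\l-(n-1)^\l\bigr)-c_2(d)\log\frac{n}{n-1}+\log\frac{1+\varepsilon(n)}{1+\varepsilon(n-1)},
\]
where $a_d(n)=(\text{main term})(1+\varepsilon(n))$. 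The main part is $\sim\l^*A_{\l^*}(d)n^{\l^*-1}$ by \ref{genexampleassump2}--\ref{genexampleassump3}. It is also a finite combination of the smooth functions $n^\l$ and $\log n$, so its $(r-1)$-fold differences are governed by derivatives. Concretely, by the mean value theorem, $\Delta^{[r-1]}(n^{\mu})\sim\mu(\mu-1)\cdots(\mu-r+2)n^{\mu-r+1}$. With $\mu=\l^*-1$, this gives the factor $(-1)^{r-1}\prod_{m=1}^{r-1}(r-m-\l^*)$. The top exponent $\l^*$ dominates the other $\l\in\mathcal S$ and the $c_2$ term, since those contribute lower powers. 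Hence the main part alone yields exactly the claimed asymptotic, $(-1)^{r+1}\l^*A_{\l^*}(d)\prod_{m=1}^{r-1}(r-m-\l^*)\,n^{\l^*-r}$. This matches Lemma~\ref{logdifflem1} with $h_1=\l^*A_{\l^*}(d)$ and $h_2=\l^*-1$, adapted to a negative exponent. For the main part alone, I could alternatively route it through Lemma~\ref{logdifflem1} with a sign change.

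\textbf{Step 2 (main obstacle): controlling $\varepsilon$.} The hypothesis \eqref{genexample} only gives $\varepsilon(n)=o(1)$. With only this, $\Delta^{[r]}\log(1+\varepsilon)$ need not be $o(n^{\l^*-r})$, and \Cref{Fact1} only delivers $o(1)$ precision. So the proof must assume, as is implicit in how the paper applies these results (for example $b_k$ in Theorem~\ref{logdiffconj1}, where the Rademacher/Wright-type expansions hold), that the asymptotic \eqref{genexample} holds with an error admitting an expansion in powers $n^{-\nu}$ that can be differenced termwise. Under such an assumption, each difference of $\log(1+\varepsilon)$ gains a factor $n^{-1}$, and the term is $O(n^{-\nu_0-r})=o(n^{\l^*-r})$. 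I would state this reading explicitly and check it for the intended applications. I expect justifying this step to be the genuine difficulty.

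\textbf{Step 3: assembly.} Summing the contributions from Steps 1 and 2 gives the lemma, with the sign $(-1)^{r+1}$ making the leading coefficient positive because $0<\l^*<1$.
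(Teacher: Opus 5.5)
Your Step 1 is correct, and it takes a different and more careful route than the paper. The paper only observes $\log(a_d(n))\sim A_{\l^*}(d)n^{\l^*}$ and then applies Lemma~\ref{logdifflem1} with $h_1=A_{\l^*}(d)$, $h_2=\l^*$. You instead difference the explicit main term $F(x):=\log c_1(d)+\sum_{\l\in\mathcal S}A_\l(d)x^\l-c_2(d)\log x$. The cleanest way to write this is $\Delta^{[r]}(F)(n)=F^{(r)}(\xi_n)$ with $\xi_n\in(n-r,n)$ and $F^{(r)}(x)\sim A_{\l^*}(d)\l^*(\l^*-1)\cdots(\l^*-r+1)x^{\l^*-r}$. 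Your sign and exponent bookkeeping checks out. Drop the aside about routing this through Lemma~\ref{logdifflem1}. That lemma assumes only $g_d(n)\sim h_1(d)n^{h_2(d)}$, which says nothing about differences, and it is false as stated: $g_d(n)=h_1n^{h_2}+(-1)^n$ has $\Delta^{[1]}(g_d)(n)=2(-1)^n+o(1)$.

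The obstacle you flag in Step 2 is a genuine gap, and it cannot be closed, because the lemma is false under \eqref{genexample} alone. For any admissible data, set, for $n\ge 3$ (and $a_d(n):=1$ for $n\le 2$),
\[
a_d(n):=M(n)\left(1+\frac{(-1)^n}{\log n}\right),\qquad M(n):=c_1(d)\frac{\exp\left(\sum_{\l\in\mathcal S}A_\l(d)n^\l\right)}{n^{c_2(d)}}.
\]
This sequence is positive and satisfies \eqref{genexample} and \ref{genexampleassump1}--\ref{genexampleassump4}. Write $\log(1+(-1)^nu)=\frac12\log(1-u^2)+(-1)^n\frac12\log\frac{1+u}{1-u}$ with $u=\frac{1}{\log n}$. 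The $r$-th difference of the perturbation is then $(-1)^n\frac{2^r+o(1)}{\log n}$. This swamps the main contribution, which is of order $n^{\l^*-r}$, so $(-1)^{r+1}\Delta^{[r]}(\log(a_d))(n)$ changes sign infinitely often. The paper's proof fails at exactly the point you identified: replacing $\log(a_d(n))$ by its first-order asymptotic throws away all information about increments.

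So strengthening the hypothesis, as you propose, is the only way forward, and it should become part of the statement. The simplest sufficient version needs no smoothness. If $a_d(n)=M(n)(1+\varepsilon(n))$ with $\varepsilon(n)=o(n^{\l^*-r})$, then
\[
|\Delta^{[r]}(\log(1+\varepsilon))(n)|\le 2^r\max_{0\le k\le r}|\log(1+\varepsilon(n-k))|=o(n^{\l^*-r}).
\]
Your version also works if you phrase it precisely: $\varepsilon$ is finitely many smooth terms $e_kn^{-\nu_k}$, handled by the mean value theorem, plus a remainder that is $o(n^{\l^*-r})$, handled by the crude bound above. An asymptotic expansion cannot be differenced termwise without such remainder control. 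For applications such as $b_k$ in Theorem~\ref{logdiffconj1}, this input must come from circle-method or Rademacher-type expansions. The Tauberian Proposition~\ref{sec0prop1}, which the paper uses, only yields $\sim$.
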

\begin{proof}
Note that if $f(n)\sim g(n)$, then $\log (f(n))= \log(g(n))(1+o(1))$ as $n\to \infty$ unless $g(n)\to 0$. Thus, using \eqref{genexample}, \ref{genexampleassump2}, and \ref{genexampleassump3}, we have, as $n \to \infty$, 
	\begin{align*}\nonumber 
	&\log(a_d(n))
	\sim A_{\l^*}(d)n^{\l^*}.
	\end{align*}
 Finally, using \ref{genexampleassump2}, \ref{genexampleassump3}, and  \Cref{logdifflem1}, we conclude the proof.\qedhere 
\end{proof}

\begin{proof}[Proof of \Cref{logdiffconj1}]
 Using \Cref{brokenlem1} and \Cref{logdifflem2}, we conclude the proof.\qedhere
\end{proof}

\section{Question for future research}\label{concluding}
We end with a few questions and remarks on potential future work.  
\begin{remarks}
\leavevmode\par
\begin{enumerate}[leftmargin=*, labelsep=0.5em]
 \item \noindent  In \Cref{genthm}, we assume {\rm{\ref{GenGORZThm1assump1}}}, {\rm{\ref{GenGORZThm1assump2}}}, and {\rm{\ref{genthmassump2}}}. Can one relax (or modify) {\rm{\ref{genthmassump2}}} to obtain more general results?
\item With additional assumptions, can one obtain secondary order asymptotics of $I^{[\bm{M}]}_{\bm{\rho},\bm{s}}(a)(n)$ as studied in Theorem~\ref{genthm}?
\item In this paper, we restrict to sequences $\{a(n)\}_{n \ge 0}$ with asymptotics of the shape \eqref{genexample}. Can one extend our results to combinatorial sequences whose asymptotics do not have this shape?
\item While condition {\rm{\ref{GenGORZThm1assump1}}} in Theorem~\ref{GenGORZcor1} underlies the ``generic'' situation of Hermite polynomial limits for (renormalized) Jensen polynomials, there are many other interesting phenomena for other combinatorial sequences. 
For instance, in \cite{GriffinSouth}, Griffin and South obtained another distinguished class of polynomials that encode the asymptotics of Jensen polynomials for Taylor coefficients of entire functions. Can a suitable framework be developed for ``non-Hermite'' phenomena like that uncovered by Griffin and South? 
\item 
Many of the results in this paper can be shown to follow for large classes of sequences including Fourier coefficients of weakly holomorphic modular forms. For instance, we show that \eqref{Briggsineq} eventually  holds 
for a sequence  of the shape  \eqref{genexample}. Weakly holomorphic modular forms automatically satisfy such conditions. It would be interesting to pin down which of our results hold for these, and for other natural classes of automorphic objects. 
\end{enumerate}
\end{remarks}

\end{document}